

\documentclass[10pt]{article} 
\pdfoutput=1
\usepackage{fullpage}
\usepackage{changepage}
\usepackage[lined,ruled,boxed,linesnumbered]{algorithm2e}
\usepackage{array}
\usepackage{graphics} 
\usepackage{latexsym}
\usepackage{subfigure} 
\usepackage{siunitx}
 \usepackage{tikz}
\usepackage{framed}
\usepackage{pgfplots}
\usepackage{ifpdf}
\usetikzlibrary{spy,backgrounds}
\iftrue
  \usepackage{tikz}
  \usepackage{pgfplots}
  \pgfplotsset{
    compat=1.3,
    tick label style={font=\scriptsize},
    label style={font=\scriptsize},
    legend style={font=\scriptsize}
  }
  
\pgfplotsset{
 compat=1.3, 
 tick label style={font=\small},
 label style={font=\small},
 legend style={font=\footnotesize},
contour/draw color={black},
contour/labels=false
}
  
  \usepgfplotslibrary{external}
\tikzexternalize[prefix=fig/]
\makeatletter
\newcommand{\gettikzxy}[3]{%
  \tikz@scan@one@point\pgfutil@firstofone#1\relax
  \edef#2{\the\pgf@x}%
  \edef#3{\the\pgf@y}%
}
\makeatother
\usepgfplotslibrary{groupplots}
\usetikzlibrary{shapes.geometric}
\usetikzlibrary{spy}
\usetikzlibrary{snakes}
\usetikzlibrary{backgrounds}
 \usepgfplotslibrary{external}
 
\else
  \usepackage{tikzexternal}
  \tikzexternalize
  \tikzsetexternalprefix{fig/}
  
\fi

\usepackage{mathtools}
\usepackage{enumerate}
\usepackage{amssymb}
\usepackage[mathscr]{eucal}

 \usepackage{amsthm}
 \usepackage[utf8x]{inputenc}
\usepackage{bbm}
\usepackage{bm}
\usepackage{color}

\usepackage{mathtools}

\mathtoolsset{showonlyrefs}

\newenvironment{myenv}{\begin{adjustwidth}{1cm}{}}{\end{adjustwidth}}
\vfuzz2pt 
\hfuzz2pt 
 \newtheorem{thm}{Theorem}[section]
 \newtheorem{cor}[thm]{Corollary}
 \newtheorem{lem}[thm]{Lemma}
 \newtheorem{ass}[thm]{Assumption}
 \newtheorem{prop}[thm]{Proposition}
 \newtheorem{ex}{Example}

\newtheorem{rem}{Remark} 

\newcommand{\malph}{\langle \alpha\rangle}
\newcommand{\mbet}{\Big \langle \dfrac{\beta}{v}\Big\rangle}
\newcommand{\Fact}{\varepsilon^\gamma}
\newcommand{\Oh}{O}
\newcommand{\oh}{o}
\renewcommand{\d}{\mathrm{d}}

\newcommand{\R}{\mathbb{R}}

\newcommand{\J}{\mathcal{J}}

\newcommand{\eps}{\varepsilon}     
 \newcommand{\diag}{\,\mathrm{diag}} 
\newcommand*\samethanks[1][\value{footnote}]{\footnotemark[#1]}

\newcommand{\step}{\mathcal{S}_{\delta t}}

\newcommand{\ut}{\tilde{u}}
\newcommand{\etildeexpr}{\Id+\varepsilon^\gamma V^{-1}}
\newcommand{\Id}{\mathcal{I}}
\newcommand{\deriv}{\mathscr{D}}
\newcommand{\Maxw}{{\mathcal{M}_v}}
\newcommand{\Maxwj}{{\mathcal{M}_{v_j}}}
\newcommand{\imag}{\imath}
\newcommand{\feps}{f^{\varepsilon}}
\newcommand{\ueps}{u^{\varepsilon}}

\renewcommand{\Re}{\mathrm{Re}}
\renewcommand{\Im}{\mathrm{Im}}


%

\title{A high-order asymptotic-preserving scheme for kinetic
equations using projective integration}
\author{Pauline Lafitte\thanks{Laboratoire de
    Math\'ematiques Appliqu\'ees aux Syst\`emes, Ecole Centrale Paris, Grande
    Voie des Vignes, 92290 Ch\^atenay-Malabry, France ({\tt pauline.lafitte@ecp.fr}).} \and Annelies Lejon\thanks{Department of Computer Science, KU Leuven, Celestijnenlaan
200A, 3001 Leuven, Belgium (firstname.lastname@cs.kuleuven.be). The second
author's work was supported by the Agency for Innovation by Science and
Technology in Flanders (IWT) } \and Giovanni Samaey\samethanks{} }
\date{} 

\begin{document}
\maketitle
\begin{abstract}
  We investigate a high-order, fully explicit, asymptotic-preserving scheme for
  a kinetic equation with linear relaxation, both in the hydrodynamic and
  diffusive scalings in which a hyperbolic, resp.~parabolic, limiting equation
  exists. The scheme first takes a few small (inner) steps with a simple,
  explicit method (such as direct forward Euler) to damp out the stiff
  components of the solution and estimate the time derivative of the slow
  components. These estimated time derivatives are then used in an (outer)
  Runge--Kutta method of arbitrary order.  We show that, with an appropriate
  choice of inner step size, the time-step restriction on the outer time step is
  similar to the stability condition for the limiting macroscopic
  equation. Moreover, the number of inner time steps is also independent of the
  scaling parameter.  We analyse stability and consistency, and illustrate
  with numerical results.
\end{abstract}

%
%
%
\section{Introduction}


In many applications (such as traffic flow, biology or
physics), the system under study consists of a large number of interacting particles. One option is to simulate such systems at a microscopic level, via an agent-based description with great modelling detail. 
At a mesoscopic level, one can write a kinetic description that governs the evolution of the particle distribution in position-velocity space. Then, $f(x,v,t)$ represents the probability of finding a particle at position $x$, moving with velocity $v$ at time $t$.  Its evolution is governed by a kinetic equation,
\begin{equation}
    \partial_t \feps + v\partial_x\feps =Q(\feps),
    \label{eq:intro_kineq}
\end{equation}
in which the lefthand side describes free transport and $Q(f)$ embodies collisions (velocity changes). 
Equation \eqref{eq:intro_kineq} can be made dimensionless via a rescaling with
respect to the characteristic length $L$, time $T$ and velocity $V$
scales
\begin{equation*}
    \tilde{x} = L x \qquad \tilde{t} = T t \qquad
    \tilde{v} = V v.
\end{equation*}
The regimes in which we are interested are $L=VT
\varepsilon^\gamma$, where $\varepsilon$ is a positive constant and $\gamma$ is an integer that indicates a hydrodynamic ($\gamma=0$) or
diffusive ($\gamma=1$) scaling.  (Details on the choice of scaling are in
section~\ref{sec:modprob}.) This, omitting the tildes, results in the dimensionless equation
\begin{equation}\label{eq:kin_eq_dimensionless}
    \partial_{t}
    \feps+\dfrac{v}{\varepsilon^\gamma}\partial_{x}\feps =
    \dfrac{Q(\feps)}{\varepsilon^{\gamma+1}}.
\end{equation}

 In a diffusive or hydrodynamic scaling, one can usually obtain an approximate
 macroscopic partial differential equation (PDE) for a number of low-order
 moments of the particle distribution $f$ (such as density, momentum, etc.).  
Improving upon the macroscopic approximation, however, is computationally expensive.
Because of the stiffness of~\eqref{eq:kin_eq_dimensionless}, explicit methods require an excessively small time-step for small values of $\varepsilon$, whereas implicit methods suffer from the high dimensionality of the problem. 

There is currently a large research effort in the design of algorithms that are
uniformly stable in $\varepsilon$ and approach a scheme for the limiting
equation when $\varepsilon$ tends to $0$; such schemes are called
\emph{asymptotic-preserving in the sense of Jin} \cite{Jin1999}. We briefly
review here some achievements, and refer to the cited references for more
details. In \cite{jipato1,jipato2,klar98}, separating the distribution $f$ into
its odd and even parts in the velocity variable results in a coupled system of
transport equations where the stiffness appears only in the source term,
allowing to use a time-splitting technique \cite{strang} with implicit treatment
of the source term; see also related work in \cite{Jin1999, klar98, klar98-2,
  klar99}.  Implicit-explicit (IMEX) schemes are an extensively studied
technique to tackle this kind of problems \cite{Ascher1997,Filbet2010} (and
references therein). Recent results in this setting were obtained by Dimarco
\emph{et al.}~to deal with nonlinear collision kernels \cite{Dimarco2013}, and
an extension to hyperbolic systems in a diffusive limit is given in
\cite{Boscarino2013}. 
A different point of view based on well-balanced methods was introduced by Gosse
and Toscani \cite{goto1,goto2}, see also \cite{Buet2006717,buco}. Discontinuous
Galerkin schemes have also been developed \cite{lamomi, adam, lomo, lomc, guka},
as well as regularization methods \cite{halo,haha}. When the collision operator
allows for an explicit computation, an explicit scheme can be obtained subject
to a classical diffusion CFL condition by splitting $f$ into its mean value and
the first-order fluctuations in a Chapman-Enskog expansion form
\cite{gl1}. Also, closure by moments \cite[e.g.]{cogogo} can lead to reduced
systems for which time-splitting provides new classes of schemes
\cite{Carrillo2008}, see \cite{mine, pomr, lemi,struchtrup2005macroscopic} for
more complete references on moment methods in general. Alternatively, a
micro-macro decomposition based on a Chapman-Enskog expansion has been proposed
\cite{lemi}, leading to a system of transport equations that allows to design a
semi-implicit scheme without time splitting. An non-local procedure based on the
quadrature of kernels obtained through pseudo-differential calculus was proposed
in \cite{besse2010derivation}.



In \cite{lafitte2012}, an alternative, fully explicit, asymptotic-preserving
method was proposed, based on projective integration, which was introduced in
\cite{gear:1091} as an explicit method for stiff systems of ordinary
differential equations (ODEs) that have a large gap between their fast and slow
time scales; these methods fit within recent research efforts on numerical
methods for multiscale simulation
\cite{EEng03,E:2007p3747,KevrGearHymKevrRunTheo03,Kevrekidis2009a}; see
also~\cite{logg,sommeijer1990increasing,Vandekerckhove2007a} for related
approaches. In projective integration, the fast modes, corresponding to the
Jacobian eigenvalues with large negative real parts, decay quickly, whereas the
slow modes correspond to eigenvalues of smaller magnitude and are the main
contributions to the solution. Projective integration allows a stable yet
explicit integration of such problems by first taking a few small (inner) steps
with a simple, explicit method, until the transients corresponding to the fast
modes have died out, and subsequently projecting (extrapolating) the solution
forward in time over a large (outer) time step; a schematic representation of
the scheme is given in figure~\ref{fig:PI}.
 \begin{figure}[h]
\centering
    \includegraphics[width=0.45\textwidth]{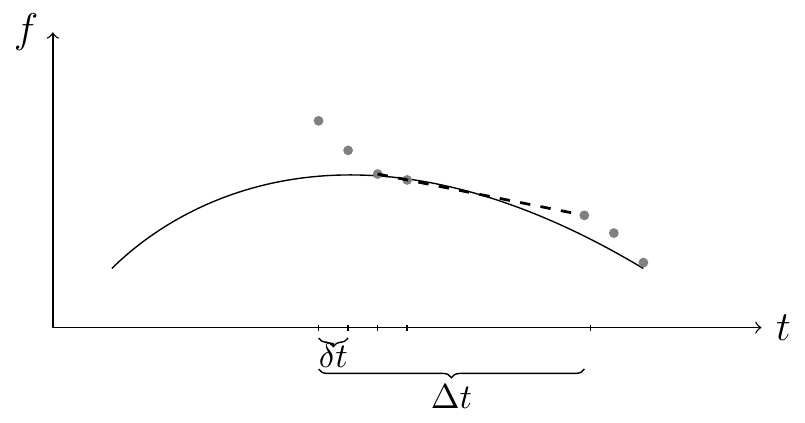}
    \caption{Illustration of first order projective integration.}
    \label{fig:PI}
\end{figure}
In \cite{lafitte2012}, this method was shown to be asymptotic-preserving for kinetic equations in the diffusive scaling with a linear relaxation collision operator: given an adequate choice of the size of the inner time step, one can obtain a method that has a CFL-type step-size restriction on the outer time step, and requires a number of inner steps that is independent of $\varepsilon$.  The computational cost of the method is thus independent of $\varepsilon$. 

%

Many of the above-described methods have inherent limitations with respect to
the order that can be achieved with the time discretisation, for instance due to
the time-splitting. In this paper, we present a projective integration method
that allows to attain arbitrary order accuracy in time. The generalisation is
based on a modification of classical Runge--Kutta methods, and retains all
advantages of the method in \cite{lafitte2012}, i.e., it is fully explicit and asymptotic-preserving. 
Additionally, we significantly extend the analysis of the scheme.  Specifically,
the results in \cite{lafitte2012} are limited to the diffusive scaling, and to
an equation that has a pure diffusion limiting equation. In this
paper, we extend these results to model equations that result in an
advection-diffusion limit when $\varepsilon$ tends to $0$, and the analysis now covers both the diffusive and the hydrodynamic scaling.  
In \cite{Melis2013}, we discuss and illustrate how the
method can be used in conjunction with a relaxation method \cite{arna,JinXin95}
to create a fully general, explicit time integration method for hyperbolic
systems of conservation laws, also in multiple space dimensions. We remark that
alternative approaches to obtain a higher-order projective integration scheme have been proposed in \cite{Lee2007,Rico-Martinez}.

The remainder of the paper is organized as follows.
In section~\ref{sec:modprob}, we discuss the model problems that will be used
during the numerical experiments. We then discuss the
projective Runge-Kutta method (PRK) in section~\ref{sec:PRK}, and provide a result concerning its stability region. 
In section~\ref{sec:spectrum}, we perform 
an analysis of the spectrum of the kinetic equations introduced in section~\ref{sec:modprob}, generalising the results obtained in \cite{lafitte2012} to the hydrodynamic scaling and to systems with macroscopic advection.  The analysis also reveals how to choose the different method parameters of the projective Runge--Kutta method. In
section~\ref{sec:consist} we give a consistency proof that shows the order of accuracy. We illustrate the results with some numerical experiments in section~\ref{sec:exp}.  Finally, section~\ref{sec:concl} contains a conclusion and outlook to future work.

\section{Model problems}\label{sec:modprob}
\subsection{A simple kinetic equation}

As a first model problem, we study a dimensionless scalar kinetic equation with linear relaxation
in one space dimension,
\begin{equation}
		\partial_t f^\varepsilon+\dfrac{v}{\varepsilon^\gamma}\partial_x
		f^\varepsilon=\dfrac{\Maxw(u^\varepsilon)-f^\varepsilon}{\varepsilon^{\gamma+1}}\label{kin_eq},
\end{equation}
modelling the evolution of a particle distribution function
$f^\varepsilon(x,v,t)$ that gives the distribution of particles at a given
position $x\in U=[-1,1)$ with velocity $v\in V\subset \R$ at time $t>0$,
$\varepsilon$ being a positive fixed constant. For the consistency analysis, we
will impose periodic boundary conditions In the numerical experiments, we will
also use Neumann boundary conditions. The parameter $\gamma$ defines the
scaling: when $\gamma=0$, the scaling is called hydrodynamic; $\gamma=1$
corresponds to a diffusive scaling.  The righthand side represents a BGK
collision operator \cite{Bhatnagar1954} that models linear relaxation of
$f^\varepsilon$ towards a Maxwellian distribution $\Maxw(u^\varepsilon)$, in
which $u^\varepsilon(x,t)=\langle f^\varepsilon(x,v,t)\rangle$ is the density,
obtained via averaging over the measured velocity space $(V,\mu)$, i.e.,
\begin{equation}\label{eq:bracket}
	\langle \cdot \rangle = \int_V \cdot \; d\mu(v).
\end{equation}

Let us now discuss the measured velocity space $(V,\mu)$ and the Maxwellian $\Maxw$.

\paragraph{Velocity space} We consider odd-symmetric velocity spaces $(V,\mu)$ :
\begin{equation}
\begin{cases}
\langle 1 \rangle=\int_V \d \mu(v)=1,\\
\langle h \rangle=\int_V h(v)\d \mu(v)=0\text{ for any odd integrable function $h: V \longrightarrow \R$,}\\
\langle v^2 \rangle=\int_V v^2\d \mu(v)=d>0.
\end{cases}
\end{equation}
We restrict ourselves to discretized velocity spaces
of the form
\begin{equation}\label{eq:defV}
V:=\{ v_j \}_{j=1}^J, \qquad d\mu(v)=\sum_{j=1}^J w_j \delta(v-v_j),
\end{equation}
with $J$ even, where the velocities satisfy $v_{j}=-v_{J-j}$ for all $j$, and $w_j$ are
appropriately chosen weights that satisfy $\sum_j^J w_j=1$. 

In the diffusive scaling ($\gamma=1$), the discrete velocity space $V$ results
from applying a Gauss quadrature discretisation to~\eqref{eq:bracket}
\cite{cohen2011numerical}. Throughout the analysis, we will consider a uniform
symmetric discretisation of $V=(-1,1)$, i.e., ${v_{j} =(2j-J-1)/J}$, with ${J/2+1
\le j \le J}$; the weights are then defined as $w_j=1/J$. In our numerical
experiments, we will use discretisations of (i) the velocity space $V=(-1,1)$
endowed with the Lebesgue measure; and (ii) the velocity space $V = \R$ endowed
with the Gaussian measure
$\d \mu(v)=(2\pi)^{-1/2}\exp(-v^2/2)\d v$.  Then, $v_j$ are chosen
as the roots of the Legendre, resp.~Hermite, polynomial of degree $2J$, and the
$w_j$ are the corresponding quadrature weights.  In the hyperbolic scaling
$(\gamma=0)$, $(V,\mu)$ needs to satisfy the subcharacteristic condition (which
ensures the positivity of the diffusion coefficient), see, e.g.,
\cite{arna,Melis2013}.


\paragraph{Maxwellian} Let us assume that the Maxwellian $\Maxw$ 
satisfies (see, e.g., \cite{arna, Bouchut1998})
\begin{equation}\label{eq:max_conditions}
		\left\langle \Maxw(u)\right\rangle = u,\qquad 
		\left\langle v \Maxw(u) \right\rangle = \varepsilon^\gamma A (u).
\end{equation}
Throughout the analysis and numerical experiments, we will use
\begin{equation}\label{eq:maxw-proto}
	\Maxw(u) = u +\varepsilon^\gamma\dfrac{A(u)}{v}.
\end{equation}
In the analysis, we will restrict ourselves to the linear case, $A(u)=u$.

Let us now discuss the
limiting macroscopic equation when $\varepsilon$ tends to $0$ by performing a
Chapman-Enskog expansion,
\begin{equation}\label{eq:ce}
f^\varepsilon=\Maxw(u^\varepsilon)+\varepsilon g^\varepsilon,
\end{equation}
with $\langle g^\varepsilon\rangle=0$.
Substituting
\eqref{eq:ce}
into the model equation \eqref{kin_eq} yields
\begin{equation}\label{eq:filled-in}
 \partial_t\left(\Maxw(\ueps)+\varepsilon
 g^\varepsilon\right) +
 \dfrac{v}{\varepsilon^\gamma}\partial_x\left(\Maxw(\ueps)+\varepsilon
 g^\varepsilon\right) = -\dfrac{g^\varepsilon}{\varepsilon^\gamma}.
\end{equation}
Then, taking the mean over velocity
space and using \eqref{eq:max_conditions}, we obtain
\begin{equation}
	\partial_t \ueps+ \partial_x (A(\ueps)) + \varepsilon^{1-\gamma}\langle v\partial_x
	g^\varepsilon\rangle = 0.
\end{equation}
The last term on the lefthand side can be approximated by considering the terms in~\eqref{eq:filled-in} of
order $\Oh(1/\varepsilon^\gamma)$, from which we obtain
$g^\varepsilon=-v\partial_x \Maxw(\ueps)+\Oh(\varepsilon)$. This gives rise to
\begin{equation}
	\partial_t \ueps + \partial_x (A(\ueps)) = \varepsilon^{1-\gamma} d\;
	\partial_{xx}\ueps+\Oh(\varepsilon^2), \qquad d=\langle
	v^2\rangle \label{eq:diff_limit}
\end{equation}
Depending on the scaling, we thus obtain a hyperbolic advection equation
($\gamma=0$) or a parabolic advection-diffusion equation ($\gamma=1$) when
$\varepsilon$ tends to $0$.

In this paper, we will analyse the properties of the
projective integration method in both the parabolic and the hyperbolic scaling. The numerical experiments in the present paper focus on the parabolic scaling, in which equation~\eqref{kin_eq} becomes
\begin{equation}
		\partial_t f^\varepsilon+\dfrac{v}{\varepsilon}\partial_x
		f^\varepsilon=\dfrac{\Maxw(u^\varepsilon)-\feps}{\varepsilon^{2}}\label{kin_eq_diff},
\end{equation}
with macroscopic limit
\begin{equation}
	\partial_t \ueps + \partial_x (A(\ueps)) =  d\;
	\partial_{xx}\ueps.\label{eq:diff_limitpar}
\end{equation}

Besides linear advection, we will also consider the viscous Burgers'
equation, which is obtained when choosing $A(u)=u^2$.
Numerical examples in the hyperbolic scaling are given in \cite{Melis2013}, which also discusses the generalisation to multiple space
dimensions.

%
%
%

\subsection{A kinetic semiconductor equation}\label{subsec:semicond}

While the numerical analysis of the presented algorithms is restricted to the above kinetic equation with $A(u)$ linear,
we will also provide numerical results for a second model problem, in which macroscopic advection does not originate from
the Maxwellian in the collision operator, but from an external force field.  To this end, we consider a kinetic equation that
is inspired by the semiconductor equation \cite{ali},
\begin{align}
	\begin{aligned}
		&&\partial_t f^\varepsilon+\dfrac{1}{\varepsilon}\left(v\partial_x f^\varepsilon+F \partial_v f^\varepsilon\right)=\dfrac{u^\varepsilon-f^\varepsilon}{\varepsilon^2},\\
		&&F=-\nabla\cdot \Phi, \qquad \Delta \Phi=u^\varepsilon.
\end{aligned}\label{eq:semi}
\end{align}
This equation describes the evolution of the distribution function
$f^\varepsilon(x,v,t)$, in which now an acceleration also appears due to an
electric force $F$ resulting from a coupled Poisson equation for the electric potential $\Phi$.  The velocity space is given by $V = \R$ endowed with the Gaussian measure $\d \mu(v)=(2\pi)^{-1/2}\exp(-v^2/2)\d v$.

\section{High-order projective integration}\label{sec:PRK}

The algorithm we propose in this paper is a high-order Runge--Kutta extension
of the projective integration method \cite{gear:1091,lafitte2012},
which will turn out to be a fully explicit, arbitrary order,
asymptotic-preserving time integration method for the kinetic equation
\eqref{kin_eq}. The asymptotic-preserving property \cite{Jin1999} implies that,
in the limit when $\epsilon$ tends to zero, an $\varepsilon$-independent time
step constraint can be used, similar to the hyperbolic CFL-constraint for the
limiting equation \eqref{eq:diff_limit}, depending on the scaling of \eqref{kin_eq}. To achieve this, the projective integration method combines a few small time steps with a naive (\emph{inner}) time-stepping method
with a much larger (\emph{projective, outer}) time step.  The asymptotic-preserving property will then follow from the observation that both the size of the outer time step and the \emph{number} of inner steps are independent of $\varepsilon$, resulting in a total computational cost that is independent of $\varepsilon$.

In sections~\ref{sec:inner} and~\ref{sec:outer}, we discuss the inner and
outer integrators, respectively. We then discuss the stability regions of
the projective integration method in section~\ref{sec:stab}.
\subsection{Inner integrator}\label{sec:inner}

We intend to integrate \eqref{kin_eq} on a uniform, constant in time, periodic
spatial mesh with spacing $\Delta x$, consisting of $I$ mesh points $x_i=i\Delta
x$, $0\le i \le I$, with $I\Delta x=1$, and a uniform time mesh with time step
$\delta t$, i.e., $t^k=k\delta t$ and $k\geq 0$.  The numerical solution on this
mesh is denoted as $f_{i,j}^k$, where we have dropped the dependence on
$\varepsilon$ in the numerical solution for conciseness.
After discretising in space, we obtain a semi-discrete system of ordinary differential equations
\begin{equation}\label{eq:semidiscrete}
\dot{f} = \deriv_t(f),  \qquad \deriv_t(f):=-
\dfrac{1}{\varepsilon^\gamma}\deriv_{x,v}(f) +
\frac{1}{\varepsilon^{\gamma+1}}\left(\Maxw(u) - f\right),
\end{equation}
where $\deriv_{x,v}(\cdot)$ represents a suitable discretisation of the first
spatial derivative and $u = \langle f \rangle$.  In the parabolic case, central
differences are necessary (see \cite{lafitte2012} and the next sections) and in
the related numerical experiments, we will use a fourth order discretisation,
\begin{equation}
	(\deriv_{x,v_j} f)_{i,j} = \dfrac{-f_{i+2,j} +8f_{i+1,j} -8f_{i-1,j} +f_{i-2,j}
}{12\Delta x}. \label{eq:derivx-par}
\end{equation}
 In the hyperbolic case, some type of upwinding needs to be performed and we will use, in the numerical experiments, a third order upwind biased scheme,
\begin{equation}
	\begin{cases}
		(\deriv_{x,v_j} f)_{i,j} = v_j\dfrac{2f_{i+1,j} + 3f_{i,j} - 6f_{i-1,j}
		+f_{i-2,j}}{6\Delta x} \qquad\text{if $v_j>0$,}\\[0.5cm]
		(\deriv_{x,v_j} f)_{i,j} = v_j\dfrac{-f_{i+2,j} + 6f_{i+1,j}-3f_{i,j} -
		f_{i-1,j}}{6\Delta x}
		\qquad\text{if $v_j<0$.}
	\end{cases}\label{eq:derivx-hyp}
\end{equation}

Combined with a forward Euler time discretisation, we obtain
\begin{equation}
 f_{i,j}^{k+1}=f_{i,j}^{k}-\dfrac{\delta
t}{\varepsilon^\gamma}\deriv_{x,v_{j}}(f^{k})_{i,j}+\dfrac{\delta
t}{\varepsilon^{\gamma+1}}(
\Maxwj(u^{k}_i)-f^{k}_{i,j})\qquad \forall 1\le i\le I,\quad 1\le j\le J,\label{eq_disc_scheme}
\end{equation}
which we also denote using the shorthand notation
\begin{equation} \label{scheme_time-stepper} f^{k+1} =\mathcal{S}_{\delta t}(f^{k}),\qquad k = 0, 1, \ldots
\end{equation}
In the context of projective integration, it does not make sense to investigate higher order methods for the inner integration. Some remarks on this fact are made in \cite{Melis2013}.

\subsection{Outer integrator}\label{sec:outer}

The model problem we are dealing with is clearly stiff because of the presence
of the small Knudsen parameter $\varepsilon$, leading to a time step restriction
for the naive scheme \eqref{eq_disc_scheme} of $\Oh(\varepsilon^{\gamma+1})$ due to the
relaxation term. However, as $\varepsilon$ goes to $0$, we are able to obtain a
limiting equation for which a standard finite volume/forward Euler method only
needs to satisfy a stability restriction of the form $\Delta t \le C\Delta
x^{\gamma+1}$, with $C$ a constant that depends on the specific choice of the
scheme and the parameters of the equation.

In \cite{lafitte2012}, the projective integration technique was proposed to
accelerate brute force integration; the idea, originating from \cite{gear:1091}, is
the following.  Starting from a numerical
solution $f^N$ at time $t^N=N\Delta t$, one first takes $K+1$ \emph{inner} steps of size $\delta t$,
$f^{N,k+1}=S_{\delta t}(f^{N,k}),\quad k=0,\ldots,K$, in which the superscript
pair $(N,k)$ represents a numerical solution by means of the inner scheme at time $t^{N,k}=N\Delta t +k\delta t$. The aim is to obtain
a discrete derivative to be used in the \emph{outer} step to compute $f^{N+1}=f^{N+1,0}$ via extrapolation in time, e.g.,
\begin{equation}\label{e:pfe}
  f^{N+1} = f^{N,K+1} + \left(\Delta t-(K+1)\delta t\right)\frac{f^{N,K+1} -
    f^{N,K}}{\delta t}.
\end{equation}
This method is called projective forward Euler (PFE), and it is the simplest
instantiation of this class of integration methods
\cite{gear:1091,lafitte2012}.

In this paper, we present a particular higher order extension of this idea, based on Runge--Kutta methods.  Let us denote a general explicit $S$-stage Runge--Kutta method for equation~\eqref{kin_eq} with time step $\Delta t$ as
\begin{align}
&\begin{cases}	f_s^{N+c_s}&= f^N + \Delta t \sum_{l=1}^{s-1} a_{s,l} k_l\\
 k_s &= \deriv_t\left(f_s^{N+c_s}\right)
\end{cases},\qquad
 1 \le s \le S, \\
&f^{N+1} = f^N + \Delta t \sum_{s=1}^{S}b_s k_s,
 \end{align}
with $\deriv_t$ defined in~\eqref{eq:semidiscrete}.  As in \cite{Hairer1993}, we
call the matrix $A=(a_{s,l})_{s,l=2}^{S}$ the RK matrix, $b=(b_s)_{s=1}^S$ the
RK weights and $c=(c_s)_{s=1}^S$ the RK nodes.  The values $k_s$ are called the
RK stages, and represent an approximation of the time derivative at time
$t=t^N+c_s\Delta t$. The weights $b_s$ and $c_s$ are chosen
simultaneously, and correspond to a Gauss quadrature approximation of the
integration from $t=t^N$ to $t^{N+1}$. To ensure consistency, these coefficients
satisfy the following assumptions (see, e.g., \cite{Hairer1993}):
\begin{ass}[Runge--Kutta coefficients]\label{ass:RK}
The Runge--Kutta coefficients satisfy $0\le c_s \le 1$, resp.~ $0 \le b_s \le 1,$ and 
\begin{align}
			\sum_{s=1}^Sb_s=1,\qquad  \sum_{l=1}^{S-1} a_{s,l} =c_s, \quad 1 \le s \le S.&\label{eq:cond3}
\end{align}
(Note that these assumptions imply that $c_1=0$ by the convention that $\sum_{1}^0\cdot=0$).
\end{ass}

In the higher order projective integration method, we proceed, by analogy with
the projective forward Euler method, by replacing each time derivative
evaluation $k_s$ by $K+1$ steps of an inner integrator and a time derivative
estimate as follows (with $f^{N,0}=f^N$ for consistency):
\begin{align}
s = 1 : & \begin{cases} f^{N,k} &= f^{N,k-1} + \delta t \deriv_t(f^{N,k-1}), \qquad 1 \le k \le K+1 \\
k_1 &= \dfrac{f^{N,K+1}-f^{N,K}}{\delta t} ,
\end{cases}\label{eq:rk1}\\
2 \le s \le S :& \begin{cases}
f^{N+c_s}_s&=f^{N,K+1} + (c_s\Delta t-(K+1)\delta t) \sum_{l=1}^{s-1}
\dfrac{a_{s,l}}{c_s} k_l, \\
f^{N+c_s,k}&=  f^{N+c_s,k-1} + \delta t \deriv_t(f^{N+c_s,k-1}), \qquad 1 \le k \le K+1 \\
k_s &= \dfrac{f^{N+c_s,K+1}-f^{N+c_s,K}}{\delta t},
\end{cases}\label{eq:rk2}\\
& f^{N+1} = f^{N,K+1} + (\Delta t-(K+1)\delta t)\sum_{s=1}^{S}b_s
k_s.\label{eq:rk3}
\end{align}
In the following sections, it will be shown that the small time step should be
taken as 
\begin{equation}
\delta t=\eps^{\gamma+1}.\label{eq:dt}
\end{equation}

Note that the stages $k_s$ now record a finite difference approximation of the time derivative at
time $t=t^N+c_s\Delta t + (K+1)\delta t$, not at time $t=t^N+c_s\Delta t$.
Hence, one should, in principle, adjust the weight $b_s$ to keep the Gaussian
quadrature interpretation of the Runge--Kutta method, see, e.g., \cite{Lee2007}
for work in this direction. However, as will be shown in
section~\ref{sec:consist}, this additional consistency error will be negligible in the limit when $\varepsilon$ tends to $0$, which is the relevant limit in this paper.

In the numerical experiments, we will specifically use the projective Runge--Kutta
methods of orders 2 and 4 represented by the Butcher tableaux in
Figure~\ref{fig:butcher}.

\begin{figure}
\centering
\includegraphics[width=0.75\textwidth]{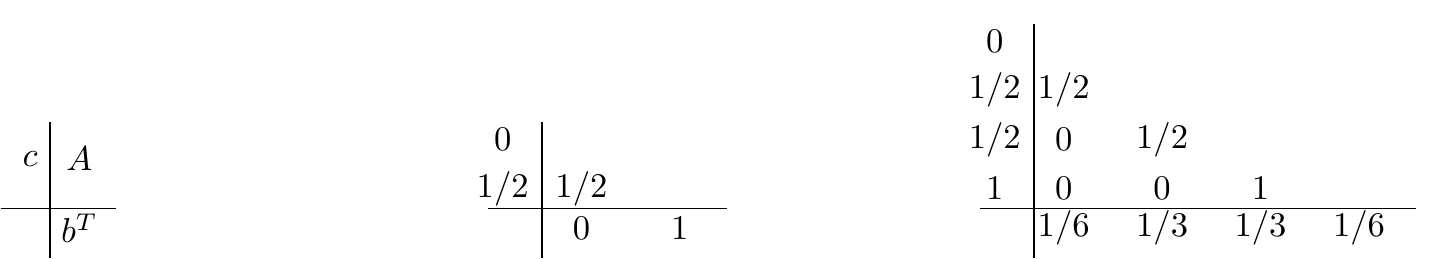}
	\caption {Butcher tableaux for Runge-Kutta methods. Left: general notation; middle: RK2 method (second order); right: RK4 method (fourth order).\label{fig:butcher}}
\end{figure}

\subsection{Stability of higher order projective integration}\label{sec:stab}

Let us now study the linear stability regions of the higher order
Runge--Kutta projective integration methods that were devised above. As is traditional, we
introduce to this end the Dahlquist test equation and its corresponding inner
integrator,
\begin{equation}\label{eq:test_eq}
	\dot{y}=\lambda y,  \qquad y^{k+1}=\tau(\lambda\delta t)y^k, \qquad \lambda < 0.
\end{equation}
As in \cite{gear:1091}, we call $\tau(\lambda\delta t)$ the \emph{amplification
factor} of the inner integrator. (For instance, if the inner integrator is
forward Euler, we have $\tau(\lambda\delta t)=1+\lambda\delta t$.) The inner
integrator is linearly stable if $\left|\tau\right|\leq 1$.
The analysis below will
reveal for which values of $\tau$ the projective integration method is also stable.
In section~\ref{sec:param}, this analysis will be combined with an analysis of the spectrum of the kinetic equation~\eqref{kin_eq} to determine the method parameters $\delta t$, $\Delta t$ and $K$ of the projective integration method.

A projective Runge--Kutta method applied to~\eqref{eq:test_eq} can be written as
\begin{equation}
	y^{N+1}=\sigma(\tau;\Delta t,\delta t, K)y^N,\label{eq:sigma}
\end{equation}
which is stable when $\left|\sigma(\tau;\Delta t,\delta t, K)\right|\leq 1$.
For projective forward Euler, we have 
\begin{equation}\label{e:stab_cond}
  \sigma^{PFE}(\tau;\Delta t,\delta t, K)=\left[\left(\dfrac{\Delta t-(K+1)\delta t}{\delta
          t}+1\right)\tau
      -\dfrac{\Delta t-(K+1)\delta t}{\delta t}\right]\tau^K.
\end{equation}

Given the kinetic equation~\eqref{kin_eq}, the goal in this paper is to take a projective time step $\Delta t =
\Oh(\Delta x^{\gamma+1})$, whereas $\delta t=\Oh(\varepsilon^{\gamma+1})$
necessarily to ensure stability of the inner brute-force forward Euler integration.  Since we are interested in the limit $\varepsilon\to 0$ for fixed $\Delta x$, we therefore look at the limiting stability regions as $\Delta t/\delta t\to \infty$.  In this regime, it is shown in \cite{gear:1091} that the values $\tau$ for which the condition \eqref{e:stab_cond} is satisfied lie in the union of two separated disks
$\mathcal{D}^{PFE}_1\cup\mathcal{D}^{PFE}_2$ where
\begin{equation}\label{eq:stab_pfe}
\mathcal{D}^{PFE}_1=\mathcal{D}\left(1-\dfrac{\delta t}{\Delta t},\dfrac{\delta
    t}{\Delta t}\right)\text{ and
}\mathcal{D}^{PFE}_2=\mathcal{D}\left(0,\left(\dfrac{\delta t}{\Delta
      t}\right)^{1/K}\right).
\end{equation}
The eigenvalues in $\mathcal{D}^{PFE}_2$ correspond to modes that are quickly
damped by the time-stepper, whereas the eigenvalues in $\mathcal{D}_1^{PFE}$
correspond to slowly decaying modes.  When the method parameters $\delta t$, $\Delta t$ and $K$ are suitably chosen, the projective integration method then
allows for accurate integration of the modes in $\mathcal{D}^{PFE}_1$ while
maintaining stability for the modes in $\mathcal{D}^{PFE}_2$.

We now show how the stability regions of higher order projective Runge--Kutta  schemes relate to those of projective forward Euler when $\delta t/\Delta t$ tends to $0$.
\begin{thm}[Stability of higher order projective Runge--Kutta methods]\label{thm:stab}
Assume the inner integrator is stable, i.e., $\left|\tau\right|\leq1$, and $\delta t$, $K$
and $\Delta t$ are chosen such that the projective forward Euler method is stable. Then, a projective Runge--Kutta method is also stable if it satisfies Assumptions~\ref{ass:RK} and the convexity condition
\begin{equation}\label{eq:rk_convex}
0 \le a_{s,l} \le c_s, \qquad \forall 1\le l \le s,\;\; 1\le s\le S.
\end{equation}
\end{thm}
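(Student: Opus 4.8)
The plan is to show that when a projective Runge--Kutta method is applied to the Dahlquist test equation, the amplification factor $\sigma(\tau;\Delta t,\delta t,K)$ can be written as a convex combination of amplification factors of projective forward Euler schemes, each with an appropriately rescaled outer step, and then to invoke stability of PFE together with the convexity of the modulus. First I would write out explicitly what the recursion \eqref{eq:rk1}--\eqref{eq:rk3} does on $\dot y=\lambda y$. Each inner block of $K+1$ forward Euler steps multiplies by $\tau^{K+1}$, and the finite-difference stage estimate $k_s=(f^{N+c_s,K+1}-f^{N+c_s,K})/\delta t$ multiplies the incoming value by $\tau^K(\tau-1)/\delta t$. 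Tracking this through the stage update $f_s^{N+c_s}$, one finds that each $k_s$ is a linear functional of $y^N$, say $k_s=\kappa_s(\tau)\,y^N$, and the final update gives $\sigma=\tau^{K+1}+(\Delta t-(K+1)\delta t)\sum_s b_s\kappa_s$.

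The key structural observation I would establish is that, because of Assumption~\ref{ass:RK} (namely $\sum_{l} a_{s,l}=c_s$) and the convexity hypothesis \eqref{eq:rk_convex} (namely $0\le a_{s,l}\le c_s$), each stage value $f_s^{N+c_s}$ is itself obtainable as the output of a \emph{projective forward Euler step} with outer step $c_s\Delta t$ applied to $y^N$ — or, more precisely, as a convex combination of such PFE outputs corresponding to the partial sums appearing in $f^{N+c_s}_s=f^{N,K+1}+(c_s\Delta t-(K+1)\delta t)\sum_l (a_{s,l}/c_s)k_l$. I would set this up inductively on $s$: assuming each $k_l$ ($l<s$) equals $(\sigma^{PFE}(\tau;c_l\Delta t,\delta t,K)\, y^N - \text{lower-order terms})$ divided suitably, the weights $a_{s,l}/c_s$ are nonnegative and sum to $1$, so the bracketed quantity defining $f^{N+c_s}_s$ is a convex combination of quantities each bounded in modulus by $|y^N|$ (using that PFE with any of these outer steps is stable, which follows from the hypothesis that PFE with $\Delta t$ is stable together with monotonicity of the PFE stability condition in the outer step over $[0,\Delta t]$). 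Hence $|f^{N+c_s}_s|\le|y^N|$, and therefore $|k_s\,\delta t|=|\tau^K(\tau-1)|\,|f^{N+c_s}_s|\le 2|y^N|$ with the right sign structure; feeding this into \eqref{eq:rk3} and using $\sum_s b_s=1$, $0\le b_s\le 1$ and $\Delta t-(K+1)\delta t\ge 0$, one concludes $|\sigma|\le 1$ exactly as for PFE.

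I expect the main obstacle to be the bookkeeping in the inductive step: the stage update in \eqref{eq:rk2} uses $a_{s,l}/c_s$ rather than $a_{s,l}$, and it applies these weights to the $k_l$ (derivative estimates) rather than to stage \emph{values}, so one must carefully rewrite $f^{N+c_s}_s$ so that the PFE structure with outer step $c_s\Delta t$ becomes manifest and the convex-combination argument applies to bona fide PFE outputs. A secondary point needing care is verifying that stability of PFE at the outer step $\Delta t$ implies stability at every intermediate outer step $c\Delta t$ with $c\in[0,1]$; this should follow because the two disks in \eqref{eq:stab_pfe} governing PFE stability both shrink (or stay fixed, for $\mathcal D_1^{PFE}$) as the ratio $\delta t/\Delta t$ increases, i.e.~as the outer step decreases, so the admissible region for $\tau$ only grows — but I would state this monotonicity explicitly as a lemma-like step before using it. Once these two points are pinned down, the remaining estimates are routine applications of the triangle inequality and the convexity of $|\cdot|$.
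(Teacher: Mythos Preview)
Your approach is essentially the paper's: it too proceeds by induction on the stage index, proving $|\kappa_s|\le|\tau^{K+1}-\tau^K|/\delta t$ (equivalently your $|f_s^{N+c_s}|\le|y^N|$) using the convexity of the weights $a_{s,l}/c_s$ together with the monotonicity-in-outer-step lemma you flag, and then closes by rewriting $\sigma$ in the same PFE form. One caution on your write-up: the crude bound $|k_s\,\delta t|\le 2|y^N|$ cannot close the final step; you need the sharp bound $|k_s\,\delta t|\le|\tau^K(\tau-1)|\,|y^N|$ so that $\sigma=\tau^{K+1}+M(\tau^{K+1}-\tau^K)\sum_s b_s g_s$ with $|g_s|\le 1$ is again a PFE amplification with an effective extrapolation length $\beta M$, $|\beta|\le 1$---this is presumably what you mean by ``exactly as for PFE,'' and it is exactly how the paper finishes.
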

Such a result is classical for regular Runge--Kutta methods \cite{Hairer1993}.  Here, however, we also provide the proof in the \emph{projective} Runge--Kutta case, to show that the above property holds both for the stability domain corresponding to slow eigenvalues and for the stability domain corresponding to quickly damped eigenvalues.

\begin{proof}
%
Let us first introduce, as in \cite{gear:1091},
$M=(\Delta t - (K+1)\delta t)/\delta t$ and, similarly, $M_s = (c_s\Delta
t-(K+1)\delta t)/\delta t$,
and remark that $M_s \leq c_s M \leq M$ is satisfied for all $s\in [1,S]$.
We can then rewrite the Runge--Kutta scheme \eqref{eq:rk1}-\eqref{eq:rk2}-\eqref{eq:rk3} for the test equation~\eqref{eq:test_eq}:
\begin{align}
	\begin{cases}
k_1 &=:\kappa_1(\tau) y^N =\dfrac{\tau^{K+1}-\tau^{K}}{\delta t}y^N \\
k_s &=:\kappa_s(\tau) y^N= \dfrac{\tau^{K+1}-\tau^{K}}{\delta
t}\left(\tau^{K+1}+(M_s\delta
t)\sum_{l=1}^{s-1}\dfrac{a_{s,l}}{c_s}\kappa_l\right)y^N, \qquad 2 \le s \le S,\label{eq:rk_linear}\\
 y^{N+1} &=: \sigma(\tau) y^N= \left(\tau^{K+1} + (M\delta t)\sum_{s=1}^{S}b_s
 \kappa_s\right)y^N ,
\end{cases}
\end{align}
where we have suppressed the dependence of $\kappa$ and $\sigma$ on $K$, $\delta t$ and $\Delta t$ but emphasized the dependence on $\tau$.

The proof then amounts to showing that the condition $|\sigma|\le
1\label{eq:left_to_prove}$
is satisfied as soon as the stability condition for the projective forward Euler scheme, i.e.,
\begin{equation}\label{eq:stab_fe}
\left|\left((M+1)\tau-M\right)\tau^{K}\right|\le 1,
\end{equation}
is satisfied. 
The proof is split up in three steps:
\begin{itemize}
	\item We first remark that if condition~\eqref{eq:stab_fe} is satisfied, this implies that
\begin{equation}\label{eq:stab_fe_all_Dt}
\left|\left((\alpha M+1)\tau-\alpha M\right)\tau^{K}\right|\le 1,
\end{equation}
for all $\alpha \in [0,1]$,  since $\alpha M/(\alpha M+1)\leq M/(M+1)$, so that $\mathcal{D}(M/(M+1))\subset \mathcal{D}(\alpha M/(\alpha M+1))$.
\item Next, we prove by induction that
\begin{equation}\label{eq:kappa_bound}
	\kappa_s \le \dfrac{\left|\tau^{K+1}-\tau^K\right|}{\delta t}, \qquad 1 \le s \le S.
\end{equation}
Clearly, this statement is true for $s=1$.
For $s>1$, we have
\begin{align}
	\kappa_s = \dfrac{\tau^{K+1}-\tau^K}{\delta t}\left(\tau^{K+1}+(M_s\delta
	t)\sum_{l=1}^{s-1}\dfrac{a_{s,l}}{c_s}\kappa_l\right).
\end{align}
Assume that for $n\in \{1,\hdots,s-1\}$, $s\geq 2$ :
\begin{equation}
	|\kappa_n| \leq \dfrac{\tau^{K+1}-\tau^K}{\delta t}. \label
	{ass:induction_kappa}
\end{equation}
We thus need to show that
\begin{equation}
	\left|\tau^{K+1}+(M_s\delta
	t)\sum_{l=1}^{s-1}\dfrac{a_{s,l}}{c_s}\kappa_l\right|\le 1.
\end{equation}
To this end, we write
\begin{align}
	\tau^{K+1}+(M_s\delta t)\sum_{l=1}^{s-1}\dfrac{a_{s,l}}{c_s}\kappa_l
	&=\tau^{K+1}+M_s\left(\tau^{K+1}-\tau^K\right)\sum_{l=1}^{s-1}\dfrac{a_{s,l}}{c_s}\dfrac{\kappa_l\delta
	t}{\tau^{K+1}-\tau^K}\nonumber\\
	&=\left((\alpha M_s+1)\tau-\alpha M_s\right)\tau^{K},
\end{align}
with
\[
\alpha =
\dfrac{M_s}{M}\sum_{l=1}^{s-1}\dfrac{a_{s,l}}{c_s}\dfrac{\kappa_l\delta
t}{\tau^{K+1}-\tau^K}.
\]
Using~\eqref{eq:rk_convex}, the induction hypothesis
\eqref{ass:induction_kappa} and the fact that $M_s\leq M$, we deduce that
$0\le \alpha\le 1$, from which, using \eqref{eq:stab_fe_all_Dt}, we
conclude~\eqref{eq:kappa_bound}.\\
\item Now we are ready to show that \eqref{eq:left_to_prove} holds, since the
  latest result is valid for $s=S$. Using the
same reasoning, we can rewrite $\sigma$ as:
\begin{align}
\sigma =\left((\beta M+1)\tau-\beta M\right)\tau^{k},\qquad \beta =
\sum_{s=1}^{S}b_s \kappa_s\dfrac{\delta t}{\tau^{K+1}-\tau^K},
\end{align}

from which, using \eqref{eq:kappa_bound}, $0\le \beta\le 1$ and assumptions~\ref{ass:RK},
we deduce~\eqref{eq:left_to_prove}.
\end{itemize}
\end{proof}

As for projective forward Euler, the stability region breaks up into two parts
when $\delta t/\Delta t$ tends to $0$.  By performing an asymptotic expansion of
$\sigma$ (see~\eqref{eq:sigma}) in terms of $\delta t/\Delta t$, we can obtain a
parameterisation of the boundary of both regions, defined by the set of values
$\tau$ for which $\left|\sigma(\tau;\Delta t,K,\delta t)\right|=1$. We have the
following result:

\begin{prop}\label{thm:stab_par}
In the limit when $\delta t/\Delta t$ tends to $0$, the stability region of a projective Runge--Kutta method consists of two regions $\mathcal{R}_1^{\mathrm{PRK}}\;\cup \;\mathcal{R}_2^{\mathrm{PRK}}$. The boundary of $\mathcal{R}_1^{\mathrm{PRK}}$ is given by an asymptotic expansion of the form
\begin{equation}
\tau(\theta) = 1+C_1(\theta)\left(\dfrac{\delta t}{\Delta t}\right)+C_2(\theta)
\left(\dfrac{\delta t}{\Delta t}\right)^2+\text{ h.o.t.},\qquad 0\le\theta\le 2\pi,
\end{equation}
%
whereras the boundary of $\mathcal{R}_2^{\mathrm{PRK}}$ can be expanded as
\begin{equation}
	\tau(\theta)=C_1'(\theta)\left(\dfrac{\delta t}{\Delta
			t}\right)^{1/K}+C_2'(\theta)\left(\dfrac{\delta t}{\Delta
			t}\right)^{2/K}+\text{ h.o.t.}, \qquad 0\le\theta\le 2\pi.
\end{equation}
\end{prop}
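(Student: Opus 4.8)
\section*{Proof proposal for Proposition~\ref{thm:stab_par}}

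The starting point is the linear recursion~\eqref{eq:rk_linear} for $\kappa_s(\tau)$ and $\sigma(\tau)$ established in the proof of Theorem~\ref{thm:stab}. I would first nondimensionalise by setting $h:=\delta t/\Delta t$, so that $M=h^{-1}-(K+1)$ and $M_s=c_s h^{-1}-(K+1)$, and introduce the normalised stage values $\rho_s:=\delta t\,\kappa_s/(\tau^{K+1}-\tau^K)$ (for $\tau\neq 0,1$), for which \eqref{eq:rk_linear} reads $\rho_1=1$, $\rho_s=\tau^{K+1}+M_s(\tau^{K+1}-\tau^K)\sum_{l<s}(a_{s,l}/c_s)\rho_l$, and $\sigma=\tau^{K+1}+M(\tau^{K+1}-\tau^K)\sum_s b_s\rho_s$. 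A preliminary localisation fixes the geometry: since $\tau^{K+1}-\tau^K=\tau^K(\tau-1)$, the factor $M(\tau^{K+1}-\tau^K)$ blows up like $h^{-1}$ unless $\tau$ approaches $1$ (at rate $O(h)$) or $\tau$ approaches $0$ (with $\tau^K$ of order $h$); away from such neighbourhoods $|\sigma|\to\infty$. For $h$ small these two neighbourhoods are disjoint, which already produces the decomposition $\mathcal{R}_1^{\mathrm{PRK}}\cup\mathcal{R}_2^{\mathrm{PRK}}$, the first gathering the slowly decaying (``physical'') modes, the second the quickly damped ones, exactly as for the PFE disks $\mathcal{D}^{PFE}_1$, $\mathcal{D}^{PFE}_2$ in~\eqref{eq:stab_pfe}.

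For $\mathcal{R}_1^{\mathrm{PRK}}$ I would substitute $\tau=1+hw$. Then $\tau^{K+1}-\tau^K=hw(1+hw)^K$, so $M_s(\tau^{K+1}-\tau^K)=(c_s-(K+1)h)(1+hw)^Kw$ and $M(\tau^{K+1}-\tau^K)=(1-(K+1)h)(1+hw)^Kw$ are \emph{polynomials} in $(w,h)$; hence, by induction on $s$, each $\rho_s$ and $\sigma$ is a polynomial in $(w,h)$. Setting $h=0$ and using Assumptions~\ref{ass:RK} (the sums $\sum_l a_{s,l}=c_s$ and $c_1=0$) collapses the recursion to $\rho_s|_{h=0}=1+w\sum_{l<s}a_{s,l}\rho_l|_{h=0}$ and $\sigma|_{h=0}=1+w\sum_s b_s\rho_s|_{h=0}=:R(w)$, which is precisely the stability polynomial of the underlying $S$-stage Runge--Kutta method. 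Thus the boundary $|\sigma(1+hw,h)|=1$ is an $O(h)$ perturbation of $|R(w)|=1$; at a regular point $w_0(\theta)$ of that curve ($R(w_0(\theta))=e^{i\theta}$, $R'(w_0(\theta))\neq 0$) the holomorphic implicit function theorem applied to $\sigma(1+hw,h)=e^{i\theta}$ yields a unique analytic branch $w(\theta,h)=w_0(\theta)+h\,w_1(\theta)+\cdots$, and $\tau=1+hw(\theta,h)=1+C_1(\theta)h+C_2(\theta)h^2+\cdots$ with $C_1=w_0$, $C_2=w_1$, etc. (For PFE, $R(w)=1+w$ gives $C_1(\theta)=e^{i\theta}-1$ and recovers the disk $\mathcal{D}^{PFE}_1$.)

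For $\mathcal{R}_2^{\mathrm{PRK}}$ I would rescale with a fractional power: put $\eta:=h^{1/K}$ and $\tau=\eta\zeta$. Then $\tau^K=\eta^K\zeta^K=h\zeta^K$ dominates $\tau^{K+1}=\eta^{K+1}\zeta^{K+1}$, and once again $M(\tau^{K+1}-\tau^K)=(1-(K+1)\eta^K)\zeta^K(\eta\zeta-1)$ and $M_s(\tau^{K+1}-\tau^K)=(c_s-(K+1)\eta^K)\zeta^K(\eta\zeta-1)$ are polynomials in $(\eta,\zeta)$, so $\rho_s$ and $\sigma$ are polynomials in $(\eta,\zeta)$. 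At $\eta=0$ the recursion becomes $\rho_s|_{\eta=0}=-\zeta^K\sum_{l<s}a_{s,l}\rho_l|_{\eta=0}$ with $\rho_1|_{\eta=0}=1$, and $\sigma|_{\eta=0}=-\zeta^K\sum_s b_s\rho_s|_{\eta=0}=:\tilde R(\zeta^K)$, a polynomial in the single variable $\xi:=\zeta^K=\tau^K/h$ (for PFE, $\tilde R(\xi)=-\xi$). The leading-order boundary is therefore $|\tilde R(\tau^K/h)|=1$; parametrising a regular arc of $|\tilde R(\xi)|=1$ by $\xi_0(\theta)$ and running the implicit function theorem in $\eta$ gives $\tau^K=h\bigl(\xi_0(\theta)+\eta\,\xi_1(\theta)+\cdots\bigr)$, and extracting a $K$-th root yields $\tau(\theta)=C_1'(\theta)h^{1/K}+C_2'(\theta)h^{2/K}+\cdots$ with $C_1'(\theta)$ a $K$-th root of $\xi_0(\theta)$; letting $\theta$ sweep $[0,2\pi]$ over the $K$ branches of the root traces the whole region (and for PFE reproduces the disk $\mathcal{D}^{PFE}_2$). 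The fractional exponents $h^{j/K}$ thus arise solely from this final root extraction.

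The main obstacle is making the inversion honest rather than formal. First, turning ``$|\sigma|=1$ is an $O(h)$ (resp.\ $O(h^{1/K})$) perturbation of $|R|=1$ (resp.\ $|\tilde R|=1$)'' into a genuine parametrisation requires excluding the finitely many critical points of $R$ and $\tilde R$, where $R'$ (resp.\ $\tilde R'$) vanishes and the limiting curve has a corner; there the implicit function theorem fails and the expansion holds only on either side, generically with exponents tied to the order of the critical point. Second, one must confirm that $\{|\sigma|=1\}$ near $\tau=1$ and near $\tau=0$ really consists of a single component each, which is inherited from connectedness of the Runge--Kutta stability region, and that no spurious stable patches survive far from $\{0,1\}$ (on the exceptional set where $\sum_s b_s\rho_s$ degenerates). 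Everything else --- in particular the explicit forms of $C_2$, $C_2'$ --- is routine bookkeeping of the polynomial expansions of $\sigma$ in $(w,h)$ and in $(\eta,\zeta)$.
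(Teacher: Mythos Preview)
Your proposal is correct and follows the same overall strategy as the paper --- substitute the two ansätze $\tau=1+O(h)$ and $\tau=O(h^{1/K})$ into the amplification factor and expand --- but your framing is noticeably cleaner. The paper proceeds more computationally: it first derives a closed-form expression $F_s(\tau)=\sum_{j=0}^{s-1}(\tau^K(\tau-1))^j(\bar A^j(e_1+\tau^{K+1}e_r))_s$ for what you call $\rho_s$ (by induction in $s$, using the matrix $\bar A=\diag(M_s/c_s)A$), and then performs binomial expansions of $\tau^{K+1}$ and $(\tau^K(\tau-1))^j$ in each regime to match powers of $z$ with $\sigma=e^{i\theta}$. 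Your route instead observes directly that $\sigma$ is polynomial in $(w,h)$, resp.\ $(\zeta,\eta)$, identifies the leading-order equations $\sigma|_{h=0}=R(w)$ (the classical Runge--Kutta stability polynomial) and $\sigma|_{\eta=0}=\tilde R(\zeta^K)$, and then invokes the holomorphic implicit function theorem at regular points. This buys you two things the paper leaves implicit: (i) a transparent explanation of why $\mathcal{R}_1^{\mathrm{PRK}}$ converges to the classical Runge--Kutta stability region (stated without proof after Proposition~\ref{thm:stab_par}), and (ii) an honest account of where the formal expansion might break down (critical points of $R$, $\tilde R$; the ``single component'' and ``no spurious patches'' issues). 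Conversely, the paper's explicit $\bar A^j$ formula is what makes the coefficients $C_1,C_2,C_1',C_2'$ concretely computable in the PRK2 and PRK4 examples, which your structural argument does not supply directly.
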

The proof, containing also the expressions for $C_{1,2}(\theta)$ and
$C_{1,2}'(\theta)$, is given in the Appendix, which also contains the 
expressions of the projective Runge--Kutta methods with Butcher tableaux in
Figure~\ref{fig:butcher}. An additional observation, which we will state here
without proof,
 is that in the limit when $\delta t/\Delta t$ tends to $0$, the
stability regions of lower order methods are contained within those of
higher-order methods, i.e., the stability regions satisfy
\[
\mathcal{R}_1^{PRK,p+1}\supseteq \mathcal{R}_{1}^{PRKp}\supseteq
\mathcal{D}^{PFE}_1 \quad \text{and}\quad \mathcal{R}_2^{PRKp+1}\supseteq
\mathcal{R}_{2}^{PRKp}\supseteq \mathcal{D}^{PFE}_2, \qquad \forall p\ge 1,
\]
in which the integer $p$ indicates the order of the method.

We illustrate the
shape of these stability domains for the classical second-order and fourth order
Runge--Kutta method whose tableaux are given in figure~\ref{fig:butcher}.
The stability regions are shown in figure~\ref{fig:stab_dt}.  The figure illustrates theorem~\ref{thm:stab}, and additionally shows that the stability regions scale with $\Delta t/\delta t$ in the same way as for the projective forward Euler method.  The shape of the stability regions, however, depends on the method used.  It can be checked that the region $\mathcal{R}^{PRK}_1$ converges to the stability domain of the corresponding classical Runge--Kutta method when $\delta t/\Delta t$ tends to $0$.

\begin{figure}[h]
	\centering
 	\includegraphics[width=\textwidth]{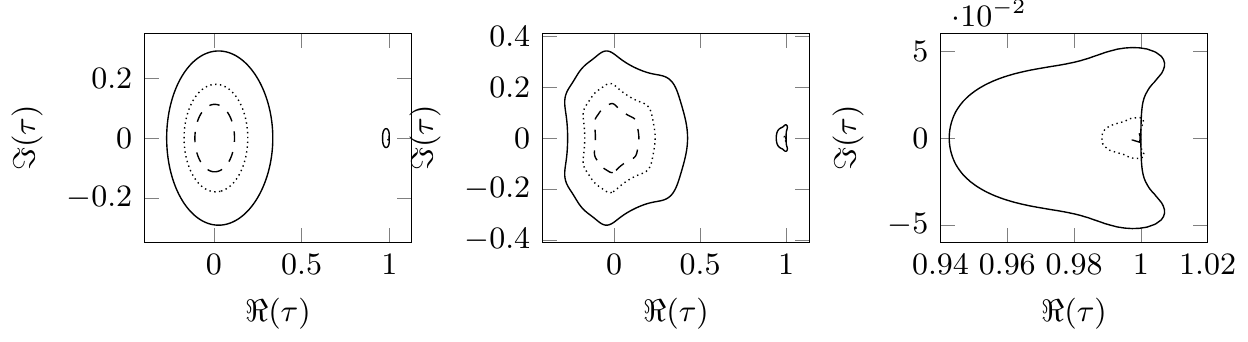}
	\caption{The two leftmost pictures show (respectively) the stability regions for the PRK2
	method and the PRK4 method, while the picture on the right shows a zoom on the
	region of the PRK4 method near $1$. Parameters: $\Delta t=\num{1e-3},K=3$ and
	$\delta t=\num{1e-6}$ (dashed), $\delta t=\num{4e-6}$ (dotted) and $\delta t=\num{1.6e-5}$.\label{fig:stab_dt}}
\end{figure}

The main conclusion of the above analysis is that, whereas the stability regions of higher order projective Runge--Kutta methods differ from those of projective forward Euler in their precise shape, their qualitative dependence on the parameters of projective integration ($\delta t$, $K$ and $\Delta t$) is the same, and method parameters that are suitable for projective forward Euler, will also be suitable for the higher order projective Runge--Kutta method.

\section{Stability analysis}\label{sec:spectrum}

We are now ready to study the stability of the projective integration schemes for the kinetic equation~\eqref{kin_eq}. After introducing some notation in section~\ref{sec:notation}, we compute bounds on the spectrum of the inner integrator \eqref{eq_disc_scheme} with a linear Maxwellian~\eqref{eq:maxw-proto} with $A(u)=u$ in section~\ref{sec:spec_inner}. Subsequently, we look into suitable parameter choices for the projective integration schemes (section~\ref{sec:param}).

\subsection{Notation and assumptions}\label{sec:notation}

We first rewrite the semi-discretized kinetic equation~\eqref{eq:semidiscrete} in the (spatial) Fourier domain,
\begin{equation}\label{eq:fourier_semidiscr}
\partial_t \hat{F}(\zeta) = B\; \hat{F}(\zeta), \qquad \text{with}\qquad
B = \dfrac{1}{\varepsilon^{\gamma+1}}\left(\varepsilon D  +MP-\Id \right),
\end{equation}
with $\hat{F}\in \R^J$, the matrices $B$, $M$,
$P \in \R^{J\times J}$, and  $\Id$ the identity matrix of dimension $J$.
In~\eqref{eq:fourier_semidiscr}, the matrix $D$ represents minus the
(diagonal) Fourier matrix of the spatial discretisation chosen for the convection part, $P$ is the rank $1$ Fourier matrix of the averaging of $f$ over all
velocities,
\[
P:= ee^T, \qquad e=\dfrac{1}{\sqrt{J}}(1,\ldots,1)^T \in \R^J,
\]
and the invertible matrix $M$ represents the Fourier transform of the
Maxwellian, $ M = \Id + \varepsilon^\gamma V^{-1}$, with $V$ the diagonal matrix
with elements $v_j$ given in \eqref{eq:defV}. For the spatial discretisations in equations~\eqref{eq:derivx-par} and~\eqref{eq:derivx-hyp}, the matrix $D$ is 
\begin{equation}
\begin{cases}
	D= -\imag \dfrac{8\sin(\zeta)-\sin(2\zeta)}{6\Delta x}V & \text{
(parabolic),}\\[0.5cm]
	D =	-\left(\dfrac{3-4\cos(\zeta)+\cos(2\zeta)}{6\Delta x}\right)V
	-\imag \left(\dfrac{8\sin(\zeta)-\sin(2\zeta)}{6\Delta x}\right)V&
	\text{(hyperbolic).}
	\end{cases}
\end{equation}
From now on, we write $D_j = \alpha_j + \imath \beta_j$ for $j\in\{1,\hdots,J\}$. Thus, we have 
\begin{equation}\label{eq:alpha-upwind}
	\alpha_j=-\dfrac{|v_j|}{6\Delta x}{\left(3-4\cos(\zeta)+\cos(2\zeta)\right)}
	\qquad \beta_j=-\dfrac{v_j}{6\Delta x}{\left(8\sin(\zeta)-\sin(2\zeta)\right)},
\end{equation}
for the third order upwind scheme, whereas 
\begin{equation}\label{eq:alpha-central}
\alpha_j=0 \qquad \beta_j=-v_j\dfrac{8\sin(\zeta)-\sin( 2\zeta)}{6\Delta x}.
\end{equation}
for the fourth order central scheme. We also define
\begin{equation}
	\tilde{e}=Me= (\Id+\varepsilon^\gamma V^{-1})e \label{eq:etilde},
\end{equation}
from which we obtain $MP=\tilde{e}e^T$. We write the Fourier transform of~\eqref{eq_disc_scheme} as
 \begin{equation}\label{eq:fourier_fe}
\hat{F}^{k+1} = S_{\delta t}\;\hat{F}^{k} =
\left(\Id+\delta t B \right)\hat{F}^k =
\left(1-\dfrac{\delta t}{\varepsilon^{\gamma+1}}\right)\Id +\dfrac{\delta
t}{\varepsilon^{\gamma+1}}A,
\end{equation} where  $A$ is defined as $MP+\varepsilon D$. It is clear that the
amplification factors ${\mathbf{\tau}=(\tau_1,\ldots,\tau_J)}$ of the forward Euler scheme (which are
the eigenvalues of $S_{\delta t}$)  and the eigenvalues
$\lambda=(\lambda_1,\ldots, \lambda_J)$ of the matrix $A$ are related
via
\[
\tau_j = \left(1-\dfrac{\delta t}{\varepsilon^{\gamma+1}}\right) + \dfrac{\delta
t}{\varepsilon^{\gamma+1}} \lambda_j, \qquad 1 \le j \le J.
\]

To locate the spectrum, we
assume the velocity space is symmetric (see \eqref{eq:defV}), 
\begin{equation}\label{eq:ass2}
v_{J-j}=-v_j  \qquad 1\le j \le J/2,
\end{equation}
so that
\begin{equation}
  D_{J-j}=\overline{D}_j  \qquad 1\le j \le J/2.
\end{equation}
\subsection{Spectrum of the inner integrators}\label{sec:spec_inner}

We have the following result for the spectrum of $A=MP+\varepsilon D$.

\begin{thm}
Under the assumptions in section~\ref{sec:notation}, the spectrum of the matrix
$A=MP+\varepsilon D$ satisfies
    \begin{equation*}
      \mbox{\emph{Sp($A$)}}\subset \left(\mathcal{D}\left(0,\varepsilon C\max_{j\in
            \J^+}(|\alpha_j|+|\beta_j|)\right)\right) \cup \{\lambda(\varepsilon)\}
    \end{equation*}
    where the constant $C$ depends on the parameters $(\alpha_j)_{j=1}^J$ and
    $(\beta_j)_{j=1}^J$ of the spatial discretisation scheme and the chosen
    velocities $(v_j)_{j=1}^J$. The dominant eigenvalue $\lambda(\eps)$ is
    simple and can be expanded as
\begin{eqnarray}
\Re(\lambda(\varepsilon))&=&1+\eps\langle
\alpha\rangle+\eps^2\left(\langle(\langle\alpha\rangle -\alpha)^2
\rangle-\langle\beta^2 \rangle+ \delta_\gamma \Big\langle\dfrac{\beta}{v}
\Big\rangle^2\right)+\oh(\eps^2)\\
\Im(\lambda(\varepsilon)) &=&  \eps \delta_\gamma \Big\langle \dfrac{\beta}{v}\Big\rangle +\eps^2
\left(\delta_\gamma\Bigg\langle \left(\Big\langle \dfrac{\beta}{v}\Big\rangle
-\beta\right)\left(\langle\alpha \rangle-\alpha\right)
\Bigg\rangle+\delta_{\gamma-1}\Big\langle\dfrac{\beta}{v} \Big\rangle\right)+\oh(\eps^2),
\end{eqnarray}
where we used $\delta_\gamma$ in the sense of the classical Kronecker delta symbol,
where $\delta_\gamma=1$ if $\gamma=0$ and zero otherwise.
\label{thm_spec}
\end{thm}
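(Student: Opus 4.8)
The plan is to treat $A = MP + \varepsilon D$ as a rank-one perturbation $MP = \tilde e e^T$ plus the small perturbation $\varepsilon D$, and to split the analysis according to which part of the spectrum we are looking at. First I would observe that without the $\varepsilon D$ term the matrix $MP = \tilde e e^T$ has a single nonzero eigenvalue, namely $e^T \tilde e = e^T M e = 1 + \varepsilon^\gamma \langle v^{-1}\rangle \cdot(\text{stuff})$; more precisely, since $\tilde e = (\Id + \varepsilon^\gamma V^{-1})e$ and $e^Te=1$, we get $e^T\tilde e = 1 + \varepsilon^\gamma e^T V^{-1} e = 1$ by the odd-symmetry of the velocities (the $V^{-1}$ diagonal entries $1/v_j$ cancel in pairs). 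So $MP$ has eigenvalue $1$ with a one-dimensional eigenspace spanned by $\tilde e$, and eigenvalue $0$ with multiplicity $J-1$ on the space $e^\perp$. The dominant eigenvalue $\lambda(\varepsilon)$ is the perturbation of the simple eigenvalue $1$, while the whole cluster near the origin is the perturbation of the $(J-1)$-fold eigenvalue $0$.

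For the cluster near zero: I would use a Gershgorin-type or resolvent argument. Write $A = \varepsilon D + \tilde e e^T$ and restrict attention to the invariant-ish behaviour on $e^\perp$. Since the rank-one part contributes nothing on $e^\perp$ except through the coupling $e^T(\varepsilon D)$, a Bauer--Fike / Gershgorin estimate applied in a basis adapted to the splitting $\R^J = \mathrm{span}(\tilde e)\oplus e^\perp$ shows that the $J-1$ non-dominant eigenvalues lie in a disk of radius $O(\varepsilon)$ centred at the origin, with the radius controlled by the entries of $D$, i.e. by $\max_j(|\alpha_j| + |\beta_j|)$ times a constant $C$ that absorbs the condition number of the eigenvector basis (which depends on the $v_j$ and hence on the $\alpha_j,\beta_j$). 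This is the step I expect to be the main obstacle — not the idea, but getting a clean bound with an explicit constant $C$ that genuinely only depends on the discretisation parameters and velocities and not on $\varepsilon$, because the eigenbasis of $\tilde e e^T + \varepsilon D$ is itself $\varepsilon$-dependent and one must check the constant stays bounded as $\varepsilon\to 0$ (it does, since at $\varepsilon=0$ the splitting $\mathrm{span}(e)\oplus e^\perp$ is orthogonal). One also needs that $|\lambda(\varepsilon)|$ stays bounded away from this disk, which is immediate since $\Re\lambda(\varepsilon) = 1 + O(\varepsilon)$.

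For the dominant eigenvalue: I would set up standard analytic (Rellich/Kato) perturbation theory for the simple eigenvalue $1$ of $MP$ at $\varepsilon = 0$. Write $A(\varepsilon) = \tilde e(\varepsilon) e^T + \varepsilon D$ where $\tilde e(\varepsilon) = e + \varepsilon^\gamma V^{-1} e$ depends on $\varepsilon$ as well, so I would expand everything jointly in $\varepsilon$. Let $\phi(\varepsilon)$ and $\psi(\varepsilon)$ be the right and left eigenvectors normalised by $\psi^T\phi = 1$, with $\phi(0) = \psi(0) = e$ (up to normalisation), and use the Rayleigh--Schrödinger formulas: $\lambda(\varepsilon) = e^T A(\varepsilon) e + (\text{second order via the reduced resolvent on }e^\perp) + \cdots$. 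The first-order term is $e^T D e \cdot \varepsilon$ together with the $\varepsilon^\gamma$ contribution from $\tilde e(\varepsilon)$; since $\langle v^{-1}\rangle$-type sums vanish by odd symmetry but $\langle \beta/v\rangle$ does not (as $\beta_j/v_j$ is even in $j$), this produces the $\varepsilon\langle\alpha\rangle$ real part and the $\varepsilon\,\delta_\gamma\langle\beta/v\rangle$ imaginary part — the Kronecker $\delta_\gamma$ appearing precisely because the $\tilde e$-correction carries a factor $\varepsilon^\gamma$, which is $O(1)$ when $\gamma=0$ and $O(\varepsilon)$ when $\gamma=1$, pushing that term to a different order. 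The second-order term requires the reduced resolvent of $MP$ on $e^\perp$, which is simply $(0 - MP|_{e^\perp})^{-1}$, i.e. essentially $-\Id$ up to the rank-one coupling — this is where the quadratic forms $\langle(\langle\alpha\rangle-\alpha)^2\rangle - \langle\beta^2\rangle$ and the cross term $\langle(\langle\beta/v\rangle - \beta)(\langle\alpha\rangle-\alpha)\rangle$ come from, after carefully collecting the $\varepsilon^2$, $\varepsilon^{1+\gamma}$ and $\varepsilon^{2\gamma}$ contributions and invoking the symmetry identities $\langle\alpha\rangle\langle 1\rangle = \langle\alpha\rangle$, $\langle\beta\rangle = 0$ etc. I would organise this as a lemma collecting the needed moment identities (consequences of $v_{J-j} = -v_j$, hence $\alpha_{J-j} = \alpha_j$, $\beta_{J-j} = -\beta_j$), then plug into the perturbation series and read off real and imaginary parts. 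The simplicity of $\lambda(\varepsilon)$ for small $\varepsilon$ follows from simplicity at $\varepsilon=0$ and continuity of the spectral projection.
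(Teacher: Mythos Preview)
Your proposal is essentially correct, but it takes a genuinely different route from the paper. The paper does not use Bauer--Fike/Gershgorin or Rayleigh--Schr\"odinger perturbation theory at all. Instead it (i) computes the characteristic polynomial of $A$ explicitly as
\[
\chi_A(\lambda)=\prod_{j=1}^J(\varepsilon D_j-\lambda)\Bigl(1-\tfrac{1}{J}\sum_{j=1}^J\tfrac{1+\varepsilon^\gamma/v_j}{\lambda-\varepsilon D_j}\Bigr),
\]
(ii) localises the eigenvalues by applying Rouch\'e's theorem on circles of radius $\varepsilon^{\gamma+1}r$ around $1$ and of radius $\varepsilon r$ around $0$, comparing $\chi_A$ to the previously analysed polynomial $\chi_{A_0}$ with $A_0=P+\varepsilon D$, and (iii) obtains the asymptotic expansion of the dominant eigenvalue not via eigenvector perturbation series but by writing the secular equation $\Sigma(\lambda(\varepsilon))=1$ (the bracket in $\chi_A$), Taylor expanding $\Sigma$ in $\varepsilon$, and matching real and imaginary parts order by order. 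Your operator-theoretic approach is cleaner in spirit and avoids the determinantal computation, but the paper's approach makes the constants in the small-eigenvalue disk more accessible and sidesteps the bookkeeping you flag as the ``main obstacle'' (the $\varepsilon$-dependent eigenbasis conditioning). One small slip in your sketch: for $\gamma=0$ the unperturbed right eigenvector is $\tilde e=e+V^{-1}e$, not $e$, so $\phi(0)\neq\psi(0)$; this is harmless for the perturbation series provided you keep the biorthogonal pair $(\tilde e,e)$ throughout rather than assuming symmetry.
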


The proof of theorem \ref{thm_spec} has the same structure as the proof in
\cite{lafitte2012}. However, due to the presence of the Maxwellian $\Maxw$, each
of the intermediate steps becomes more involved. We split up these steps in
several lemmas.

\begin{lem}
 The rank-one matrix $MP$ is a projection matrix.
\end{lem}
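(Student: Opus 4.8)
The plan is to verify directly that $(MP)^2 = MP$, using the factorisation $MP = \tilde e e^T$ recorded just before~\eqref{eq:etilde} together with the symmetry~\eqref{eq:ass2} of the velocity grid. First I would note that $P = ee^T$ is itself a projection: since $e^Te = \tfrac1J\sum_{j=1}^J 1 = 1$, we get $P^2 = e(e^Te)e^T = ee^T = P$. Then, writing $MP = \tilde e e^T$ with $\tilde e = Me = e + \varepsilon^\gamma V^{-1}e$, a one-line computation gives
\[
(MP)^2 = \tilde e\,(e^T\tilde e)\,e^T = (e^T\tilde e)\,MP ,
\]
so the whole claim reduces to the scalar identity $e^T\tilde e = 1$.

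To close this, I would compute $e^T\tilde e = e^Te + \varepsilon^\gamma e^TV^{-1}e = 1 + \tfrac{\varepsilon^\gamma}{J}\sum_{j=1}^J v_j^{-1}$. By the assumed symmetry $v_{J-j} = -v_j$ of the discrete velocities, with no zero velocity present (recall $v_j = (2j-J-1)/J$ on the uniform grid, which never vanishes for $J$ even), the sum $\sum_{j} v_j^{-1}$ cancels in antisymmetric pairs, so $e^T\tilde e = 1$ and hence $(MP)^2 = MP$. Equivalently, $e^TV^{-1}e = 0$ is just the Fourier-side restatement of the zeroth-moment consistency condition $\langle\Maxw(u)\rangle = u$ in~\eqref{eq:max_conditions}, from which one may instead deduce $PMP = P$ and then conclude $(MP)^2 = M(PMP) = MP$; I would mention this alternative route since it explains why the property is structural rather than an artefact of the particular discretisation.

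There is no real obstacle here: the only point needing a word of justification is that $V$ is invertible, i.e.\ $0\notin V$, which holds for all the symmetric discretisations considered. For context I would also remark that $MP$ is an \emph{oblique} projection whenever $\varepsilon>0$, since $\tilde e\neq e$; its range is spanned by $\tilde e$ (indeed $MP\,\tilde e = \tilde e$) and its kernel is the $(J-1)$-dimensional hyperplane $\{x : e^Tx = 0\}$. This is exactly the $\varepsilon=0$ picture underlying Theorem~\ref{thm_spec}, where $MP$ contributes the simple dominant eigenvalue $1$ and a zero eigenvalue of multiplicity $J-1$, and it is the natural starting point for the perturbation argument that tracks $\lambda(\varepsilon)$.
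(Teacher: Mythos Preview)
Your proof is correct and follows essentially the same route as the paper's: both reduce $(MP)^2=MP$ to the two scalar facts $e^Te=1$ and $e^TV^{-1}e=0$, the latter coming from the odd symmetry of the discrete velocities. Your use of the factorisation $MP=\tilde e e^T$ to write $(MP)^2=(e^T\tilde e)\,MP$ is in fact slightly cleaner than the paper's expansion of $M=\Id+\varepsilon^\gamma V^{-1}$, but the ingredients and logic are the same.
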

\begin{proof}
We need to show that $(MP)^2=MP$.
Using the definitions introduced above, we get
\begin{eqnarray*}
	(MP)^2 &=& (\etildeexpr)(ee^T)^2+
		\varepsilon^\gamma(\etildeexpr)ee^TV^{-1}ee^T\\
		&=& (\etildeexpr)ee^T=MP,
\end{eqnarray*}
where, in the last line, we have used (i) the fact that $e^TV^{-1}=0$ to eliminate the second term (since the velocity space is assumed to
be odd), and (ii) $e^Te=1$, from which we obtain $(ee^T)^2=ee^T$.
\end{proof}

The following corollary is an immediate consequence.

\begin{cor}The matrix $MP$  has one eigenvalue $\lambda_1=1$ and all other
  eigenvalues vanish, i. e.  $\lambda_j=0$, $2\le j \le J$.
\end{cor}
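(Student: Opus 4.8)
The plan is to exploit the two structural facts about $MP$ just established: it is a rank-one matrix, since $MP=\tilde e e^T$ with $\tilde e=Me\in\R^J$ and $e\in\R^J$, and (by the preceding lemma) it is idempotent. First I would observe that any rank-one matrix $uv^T$ has image equal to the one-dimensional span of $u$, so its kernel has dimension at least $J-1$; consequently at least $J-1$ of its eigenvalues vanish, which gives $\lambda_j=0$ for $2\le j\le J$. This already disposes of all but one eigenvalue, and no further use of idempotence is strictly needed for this part.

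For the remaining eigenvalue I would compute the trace, since for a rank-one matrix the sum of the eigenvalues reduces to the single possibly-nonzero one. Here $\mathrm{tr}(MP)=\mathrm{tr}(\tilde e e^T)=e^T\tilde e=e^TMe=e^T(\Id+\varepsilon^\gamma V^{-1})e=e^Te+\varepsilon^\gamma e^TV^{-1}e$. The normalisation $e^Te=1$ handles the first term, while $e^TV^{-1}e=\tfrac1J\sum_{j=1}^J v_j^{-1}=0$ by the odd symmetry $v_{J-j}=-v_j$ of the velocity space (the same pairwise cancellation used in the proof of the lemma). Hence $\mathrm{tr}(MP)=1$, so the unique nonzero eigenvalue is $\lambda_1=1$. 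An equivalent route, which I would mention, is to combine the rank-one eigenvalue count with the fact that an idempotent matrix has spectrum contained in $\{0,1\}$; one then only needs to rule out $MP=0$, which follows since $\tilde e e^T\neq 0$, or simply since the trace is nonzero.

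There is essentially no obstacle: the only point requiring (trivial) care is confirming that the eigenvalue $1$ is genuinely attained rather than the whole spectrum collapsing to zero, and this is immediate from $\mathrm{tr}(MP)=1\neq 0$. For completeness I would also exhibit the eigenvector explicitly, namely $\tilde e$, since $MP\,\tilde e=\tilde e\,(e^T\tilde e)=\tilde e$, which both confirms $\lambda_1=1$ and identifies the associated eigenspace.
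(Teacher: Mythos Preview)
Your proposal is correct. The paper gives no explicit proof here, merely declaring the corollary an immediate consequence of the preceding lemma that $MP$ is a rank-one projection; your argument via the rank-one kernel count and the trace computation $e^T\tilde e=1$ is a standard and essentially equivalent way to unpack that ``immediate consequence,'' and you even note the idempotence route the paper has in mind.
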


\begin{lem} Consider the matrix $A=MP+\tilde{D}$, and assume
	\begin{equation}\label{eq:ass1}
		\tilde{D}=\diag(\tilde{D}_1,\ldots,\tilde{D}_J),
		\text{ where } \tilde{D}_{j}=\tilde{D}_{j'} \text{ implies } j=j'.
			\end{equation}
		 Then, the eigenspaces of $A$ are of dimension $1$ and no $\tilde{D}_j, 1\le j
		 \le J$ is an eigenvalue of  $A$.
\end{lem}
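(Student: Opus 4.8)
The plan is to analyze the eigenvalue problem $A w = \lambda w$ for $A = MP + \tilde D = \tilde e e^T + \tilde D$, where $\tilde e = Me$ and $\tilde D$ is diagonal with distinct entries, using the fact that $MP = \tilde e e^T$ is a rank-one perturbation of the diagonal matrix $\tilde D$. First I would suppose $\lambda$ is an eigenvalue with eigenvector $w$, and write the eigen-equation componentwise as $(\tilde D_j - \lambda) w_j = -(e^T w)\,\tilde e_j$ for each $j$. Set $s := e^T w = \frac{1}{\sqrt J}\sum_j w_j$. The argument then splits on whether $s = 0$ or $s \neq 0$.

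If $s = 0$, then $(\tilde D_j - \lambda) w_j = 0$ for all $j$. Since $w \neq 0$, some $w_{j_0} \neq 0$, forcing $\lambda = \tilde D_{j_0}$; by the distinctness assumption \eqref{eq:ass1}, $w_j = 0$ for all $j \neq j_0$, so $w$ is a multiple of the standard basis vector $\mathbf{e}_{j_0}$. But then $s = e^T w = w_{j_0}/\sqrt J \neq 0$, a contradiction. Hence $s \neq 0$ for any eigenvector, which in particular means no eigenvector lies in $\ker(e^T)$, and it also shows (combined with what follows) that no $\tilde D_j$ can be an eigenvalue: if $\lambda = \tilde D_{j_0}$ for some $j_0$, then reading off the $j_0$-th component gives $0 = -s\,\tilde e_{j_0}$; since $s \neq 0$ this needs $\tilde e_{j_0} = 0$, but $\tilde e_{j_0} = (Me)_{j_0} = \frac{1}{\sqrt J}(1 + \varepsilon^\gamma/v_{j_0}) \neq 0$ for $\varepsilon$ small (or whenever $v_{j_0} \neq -\varepsilon^\gamma$), a contradiction. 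So $\lambda \neq \tilde D_j$ for all $j$, as claimed.

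Now, with $\lambda \neq \tilde D_j$ for all $j$, I can solve $w_j = -s\,\tilde e_j/(\tilde D_j - \lambda)$, and substituting back into $s = e^T w = \frac{1}{\sqrt J}\sum_j w_j$ (using $e^T \tilde e$-type sums, i.e. $\sum_j \tilde e_j/\sqrt J \cdot (\cdot)$) and dividing by $s \neq 0$ yields the scalar characteristic (secular) equation
\begin{equation}
1 + \sum_{j=1}^J \frac{\tilde e_j/\sqrt J}{\tilde D_j - \lambda} = 0.
\end{equation}
For each eigenvalue $\lambda$, the eigenvector is then determined up to the single scalar $s$, hence the eigenspace is one-dimensional; this proves the dimension claim without even needing to count roots. (If one wants the spectrum to be complete, one notes the secular function is a rational function whose numerator, after clearing the $J$ distinct poles, has degree $J$, giving $J$ eigenvalues counted with multiplicity, consistent with one-dimensional eigenspaces — but this is not needed for the stated conclusion.)

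The only mild obstacle is the bookkeeping needed to be careful that $\tilde e_j \neq 0$, which is where the structure of the Maxwellian enters: $\tilde e_j = \frac{1}{\sqrt J}(1 + \varepsilon^\gamma v_j^{-1})$, so one should note this is nonzero for $\varepsilon$ sufficiently small (the regime of interest), or simply assume $v_j + \varepsilon^\gamma \neq 0$; otherwise the corresponding $\tilde D_j$ could slip into the spectrum. Everything else is elementary linear algebra: the heart of the proof is the $s = 0$ dichotomy, which simultaneously rules out $\tilde D_j$ as eigenvalues and forces one-dimensional eigenspaces via the secular equation.
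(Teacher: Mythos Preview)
Your proof is correct and follows essentially the same approach as the paper: split on whether $s=e^T w$ vanishes, rule out $s=0$ via the distinctness of the $\tilde D_j$, and then read off from $(\lambda-\tilde D_j)w_j=s\,\tilde e_j$ that no $\tilde D_j$ is an eigenvalue and that each eigenspace is one-dimensional. You are in fact slightly more careful than the paper in isolating the hypothesis $\tilde e_j\neq 0$ (equivalently $v_j\neq -\varepsilon^\gamma$), which the paper uses implicitly when inverting $\lambda\Id-\tilde D$.
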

\begin{proof}
Let $(\lambda,W)$ be an eigenvalue and an associated
eigenvector of $A$. This implies
\begin{align}
	\left(MP+\tilde{D}\right)W=\lambda W& 
	\langle W\rangle \tilde{e}+ \tilde{D}W=\lambda W, \quad \text{with }
\langle W\rangle = e^T W. \label{eq:evv}
\end{align}

Assume now $\langle W\rangle=0$, from which we infer that
$\tilde{D}W=\lambda W$. Since $\langle
W\rangle=0$ with $W\neq 0$, there exists at least two indices $j_1$ and $j_2$
such that $W_{j_1},W{j_2}\neq 0$. However, this implies that
$\lambda=D_{j_1}=D_{j_2}$, which violates assumption~\eqref{eq:ass1}.\

So necessarily $\langle W\rangle \neq 0$. Then \eqref{eq:evv} implies $
W=\left(\lambda \Id - \tilde{D}\right)^{-1}\langle W\rangle \tilde{e}$
that is, all the eigenspaces are of dimension $1$ and no $D_j$ can be an eigenvalue of $A$.
\end{proof}

Let us, from now on, choose $\tilde{D}=\varepsilon D$, and investigate the
matrix $A=MP+\varepsilon D$.

\begin{lem}
Introducing $Q(\lambda):=\prod_{j=1}^{J}(\varepsilon D_j-\lambda)$, 
the characteristic polynomial $\chi_A(\lambda)$ of
$A=MP+\varepsilon D$ can be written as
\begin{equation} \chi_A(\lambda)=Q(\lambda)\left(1-\dfrac{1}{J}\displaystyle\sum_{j=1}^{J}\dfrac{1+\varepsilon^\gamma/v_j}{\lambda-\varepsilon
  D_j}\right) \qquad .
\end{equation}
\end{lem}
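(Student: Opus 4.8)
The plan is to compute the characteristic polynomial $\chi_A(\lambda) = \det(\lambda \Id - A)$ directly using the rank-one structure of the term $MP = \tilde e e^T$. Writing $A = \varepsilon D + \tilde e e^T$, I would factor out the diagonal part: $\lambda \Id - A = (\lambda \Id - \varepsilon D) - \tilde e e^T = (\lambda \Id - \varepsilon D)\bigl(\Id - (\lambda \Id - \varepsilon D)^{-1}\tilde e e^T\bigr)$, which is valid as a formal identity (and genuinely valid whenever $\lambda \neq \varepsilon D_j$ for all $j$; the final polynomial identity then extends to all $\lambda$ by continuity/polynomial agreement). This immediately gives $\chi_A(\lambda) = \det(\lambda \Id - \varepsilon D)\cdot \det\bigl(\Id - (\lambda \Id - \varepsilon D)^{-1}\tilde e e^T\bigr)$, and the first determinant is exactly $\prod_{j=1}^J (\lambda - \varepsilon D_j) = (-1)^J Q(\lambda)$ — here I should be careful with the sign convention; since the lemma states $Q(\lambda) = \prod_j(\varepsilon D_j - \lambda)$, we have $\det(\lambda\Id - \varepsilon D) = (-1)^J Q(\lambda)$, and I expect the $(-1)^J$ to be absorbed consistently (or the statement to implicitly use $\chi_A(\lambda) = \det(\varepsilon D + \tilde e e^T - \lambda \Id)$ up to sign — I would state the convention explicitly at the start of the proof).

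Next I would apply the matrix determinant lemma (Sylvester's identity) to the rank-one update: $\det(\Id - u w^T) = 1 - w^T u$ for vectors $u, w$. Taking $u = (\lambda\Id - \varepsilon D)^{-1}\tilde e$ and $w = e$, this yields the scalar factor $1 - e^T(\lambda\Id - \varepsilon D)^{-1}\tilde e$. It then remains to expand this inner product componentwise: $(\lambda\Id - \varepsilon D)^{-1}$ is diagonal with entries $(\lambda - \varepsilon D_j)^{-1}$, and $e = \frac{1}{\sqrt J}(1,\ldots,1)^T$, $\tilde e = Me = (\Id + \varepsilon^\gamma V^{-1})e$ has $j$-th component $\frac{1}{\sqrt J}(1 + \varepsilon^\gamma/v_j)$. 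Hence $e^T(\lambda\Id - \varepsilon D)^{-1}\tilde e = \frac{1}{J}\sum_{j=1}^J \frac{1 + \varepsilon^\gamma/v_j}{\lambda - \varepsilon D_j}$, which is precisely the claimed sum. Combining the two factors gives $\chi_A(\lambda) = Q(\lambda)\bigl(1 - \frac{1}{J}\sum_{j=1}^J \frac{1+\varepsilon^\gamma/v_j}{\lambda - \varepsilon D_j}\bigr)$ (modulo the sign bookkeeping above).

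There is essentially no hard obstacle here — the proof is a two-line application of the matrix determinant lemma once the rank-one structure $MP = \tilde e e^T$ (established in the notation section via $\tilde e = Me$, $P = ee^T$) is in hand. The only points requiring genuine care are: (i) getting the sign convention for the characteristic polynomial consistent with the stated form of $Q$, which I would pin down at the outset; (ii) noting that the factorization $\lambda\Id - A = (\lambda\Id - \varepsilon D)(\Id - (\lambda\Id-\varepsilon D)^{-1}\tilde e e^T)$ requires $\lambda \notin \{\varepsilon D_j\}$, but since both sides of the resulting polynomial identity are polynomials in $\lambda$ agreeing off a finite set, the identity holds identically — and incidentally the previous lemma already guarantees no $\varepsilon D_j$ is a root of $\chi_A$; and (iii) using $e^T V^{-1} e \neq 0$ is \emph{not} needed here (that was for the projection lemma), whereas the clean componentwise form of $\tilde e$ is. I would present this as a short direct computation rather than splitting into sub-steps.
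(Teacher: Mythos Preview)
Your proposal is correct, and actually cleaner than the paper's proof. The paper computes $\chi_A(\lambda)=\det(A-\lambda\Id)$ by writing out the matrix explicitly (each row $i$ has constant entry $\tilde e_i/\sqrt J$ plus the diagonal contribution $\varepsilon D_i-\lambda$), then performing column operations to reduce to an arrow matrix, and finally invoking the closed-form determinant for arrow matrices. You bypass all of this by recognising $A-\lambda\Id=(\varepsilon D-\lambda\Id)+\tilde e e^T$ as a diagonal-plus-rank-one matrix and applying the matrix determinant lemma directly. Both arguments exploit exactly the same structure; yours packages it in one line instead of an explicit reduction.

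Two small remarks. First, your sign worry dissolves once you note the paper's convention is $\chi_A(\lambda)=\det(A-\lambda\Id)$ (visible from the diagonal entries $\varepsilon D_j-\lambda$ in their displayed matrix), so $\det(\varepsilon D-\lambda\Id)=Q(\lambda)$ with no stray $(-1)^J$; simply state this convention at the outset. Second, your caveat about $\lambda\notin\{\varepsilon D_j\}$ and extension by polynomial identity is fine, but you could also avoid it entirely: the matrix determinant lemma in the form $\det(B+uv^T)=\det B + v^T\mathrm{adj}(B)\,u$ holds without any invertibility assumption, and for diagonal $B$ the adjugate is explicit. Either way this is a routine point and your handling of it is adequate.
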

\begin{proof}
We start by writing
\begin{equation*}
\chi_A(\lambda)=\begin{vmatrix}
	\tilde{e}_1/\sqrt{J}+\varepsilon
	D_1 -\lambda &
	\tilde{e}_1/\sqrt{J}&
	\hdots&
	\tilde{e}_1\sqrt{J}\\[0.5cm]
		\tilde{e}_2/\sqrt{J} &
	\tilde{e}_2/\sqrt{J}+\varepsilon
	D_2 - \lambda&
	\hdots&
	\tilde{e}_2/\sqrt{J}\\
	\vdots & \vdots & \ddots &   \vdots \\
	\tilde{e}_J/\sqrt{J} &
	\tilde{e}_J/\sqrt{J} &\hdots &
	\tilde{e}_J/\sqrt{J}
	+\varepsilon D_J -\lambda
\end{vmatrix},
\end{equation*}
with $\tilde{e}_j=(1/\sqrt{J})\left(1+\varepsilon^\gamma/v_j\right)$.
This is the determinant of an arrow matrix
\begin{equation*}
  a=
  \begin{pmatrix}
    d_1 & r_2 & & \hdots &   r_{J}\\
    c_2 & d_2 & 0 & & \\
    c_3 & 0 & d_3 & 0 & \\
\vdots & & & \ddots & \\
c_{J} & &  & & d_{J}
  \end{pmatrix},
\end{equation*}
the determinant of which is
\begin{equation*}
  \det(a)=\prod_{j=1}^{J}d_j-\displaystyle\sum_{j=2}^{J}\left(c_j r_j \prod_{j'=2,j'\neq j}^{J}d_{j'}\right).
\end{equation*}
 So, after identifying $d_1=\tilde{e}_1/\sqrt{J}+\varepsilon D_1-\lambda$ and
 $d_j=\varepsilon D_j-\lambda$,  $c_j=\tilde{e}_j/\sqrt{J}$ and
 $r_j=-(\varepsilon D_1-\lambda)$, for $2\leq j\leq J$ some elementary manipulations yield $\chi_A(\lambda)$
\begin{align}
\chi_A(\lambda)
&=Q(\lambda)\left(1-\dfrac{1}{J}\displaystyle\sum_{j=1}^{J}\dfrac{1+\varepsilon^\gamma/v_j}{\lambda-\varepsilon
  D_j}\right),
\end{align}
where we made use of~\eqref{eq:etilde}. This concludes the proof.
\end{proof}

To prove theorem \ref{thm_spec}, we will consider the characteristic polynomial
$\chi_A(\lambda)$ to be a perturbation of the characteristic polynomial that was studied in \cite{lafitte2012}. We recall the following theorem from \cite{lafitte2012} in the notation of the present paper.
\begin{thm}[Proposition~4.1 in \cite{lafitte2012}]\label{thm:lafitte}    Consider the
matrix $A_0=P+\varepsilon D$, assuming~\eqref{eq:ass1} and~\eqref{eq:ass2}.
Then, the corresponding characteristic polynomial is
\begin{equation}
 \chi_{A_0}(\lambda)=Q(\lambda)\left(1-\dfrac{1}{J}\displaystyle\sum_{j=1}^{J}\dfrac{1}{\lambda-\varepsilon
	  D_j}\right)
\end{equation}
and its eigenvalues satisfy
    \begin{equation*}
      \mbox{\emph{Sp($A$)}}\subset \left(\mathcal{D}\left(0,\dfrac{\varepsilon}{J}\max_{j\in
            \J^+}(|\alpha_j|+|\beta_j|)\right)\right) \cup \{\lambda(\varepsilon)\}
    \end{equation*}
    where the real eigenvalue $\lambda(\eps)$ is simple and can
    be expanded as
    \begin{equation*}
      \lambda(\varepsilon)=1-\varepsilon \dfrac{\langle\alpha\rangle}{J}
        -\dfrac{\varepsilon^2}{J^2}\langle(\alpha-\langle\alpha\rangle)^2+\beta^2\rangle+o(\varepsilon^2).
    \end{equation*}
\end{thm}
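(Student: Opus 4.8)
The plan is to treat $A_0=P+\varepsilon D$ as the rank-one update $\varepsilon D+ee^{T}$ of a diagonal matrix and to read off everything from the resulting secular equation. First I would compute the characteristic polynomial. Since $A_0-\lambda\Id=(\varepsilon D-\lambda\Id)+ee^{T}$ differs from the diagonal matrix $\varepsilon D-\lambda\Id$ (which is invertible away from the $\varepsilon D_j$, and no $\varepsilon D_j$ is an eigenvalue by the preceding lemma specialised to $M=\Id$) by a single rank-one term, the matrix-determinant lemma yields
\begin{equation*}
\chi_{A_0}(\lambda)=\det(\varepsilon D-\lambda\Id)\left(1+e^{T}(\varepsilon D-\lambda\Id)^{-1}e\right)=Q(\lambda)\left(1-\dfrac{1}{J}\sum_{j=1}^{J}\dfrac{1}{\lambda-\varepsilon D_j}\right),
\end{equation*}
using $e^{T}(\varepsilon D-\lambda\Id)^{-1}e=\tfrac1J\sum_j(\varepsilon D_j-\lambda)^{-1}$. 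This is precisely the $M=\Id$ specialisation (so $\tilde e=e$ and the factors $1+\varepsilon^{\gamma}/v_j$ collapse to $1$) of the determinant formula already established for $A=MP+\varepsilon D$, so I would simply invoke that computation.

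Next, by the preceding lemma (eigenspaces one-dimensional, no $\varepsilon D_j$ an eigenvalue, using \eqref{eq:ass1}), every eigenvalue of $A_0$ solves the secular equation $g(\lambda):=\tfrac1J\sum_j(\lambda-\varepsilon D_j)^{-1}=1$. At $\varepsilon=0$ the matrix is $P=ee^{T}$, with the simple eigenvalue $1$ (eigenvector $e$) and $0$ of multiplicity $J-1$; analytic perturbation of the simple eigenvalue gives a unique branch $\lambda(\varepsilon)\to1$. To expand it I would substitute $\lambda=1+\varepsilon a_1+\varepsilon^{2}a_2+\oh(\varepsilon^{2})$ into $g(\lambda)=1$, expand each term as $(\lambda-\varepsilon D_j)^{-1}=\lambda^{-1}\sum_{n\ge0}(\varepsilon D_j/\lambda)^{n}$, and match powers of $\varepsilon$. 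This gives $a_1=\tfrac1J\sum_j D_j$ and $a_2=\tfrac1J\sum_j D_j^{2}-\big(\tfrac1J\sum_j D_j\big)^{2}$, i.e.\ the variance of the $D_j$. Feeding in $D_j=\alpha_j+\imath\beta_j$ together with the symmetry $D_{J-j}=\overline{D}_j$ from \eqref{eq:ass2} is the crucial simplification: it forces $\langle\beta\rangle=0$ and $\langle\alpha\beta\rangle=0$, which makes both $a_1$ and $a_2$ real (explaining why $\lambda(\varepsilon)$ is the stated \emph{real} eigenvalue) and collapses $\Re(a_2)$ into a combination of the moment $\langle(\alpha-\langle\alpha\rangle)^{2}\rangle$ and $\langle\beta^{2}\rangle$. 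Matching the stated expansion exactly (the factors $1/J$, $1/J^{2}$ and the overall signs) is then bookkeeping that depends only on the bracket normalisation and the sign convention for $D$ inherited from \cite{lafitte2012}.

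For the remaining $J-1$ eigenvalues, which perturb off the $(J-1)$-fold eigenvalue $0$, I would rescale $\lambda=\varepsilon\mu$ in $\chi_{A_0}$. Writing $\chi_{A_0}(\varepsilon\mu)=\varepsilon^{J-1}\big[\varepsilon\,p(\mu)+\tfrac1J\,q(\mu)\big]$ with $p(\mu)=\prod_j(D_j-\mu)$ and $q(\mu)=\sum_j\prod_{j'\ne j}(D_{j'}-\mu)=-p'(\mu)$, the leading balance as $\varepsilon\to0$ is $q(\mu)=0$, i.e.\ $p'(\mu)=0$, so the scaled limits are the critical points of $p$. By the Gauss--Lucas theorem these lie in the convex hull of $\{D_j\}$, hence are bounded by $\max_j|D_j|\le\max_j(|\alpha_j|+|\beta_j|)$, placing the $J-1$ eigenvalues in a disk $\mathcal{D}(0,\Oh(\varepsilon))$ about the origin. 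I would make the enclosure rigorous with a Rouch\'e argument on a circle $|\lambda|=\rho$ chosen so that the small cluster lies strictly inside and $\lambda(\varepsilon)$ strictly outside, comparing $\chi_{A_0}$ with its dominant term there to count exactly $J-1$ roots.

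The step I expect to be the main obstacle is the sharp localisation of the clustered eigenvalues: the dominant-eigenvalue expansion is routine perturbation theory once the secular equation is in hand, but obtaining the \emph{explicit} radius $\tfrac{\varepsilon}{J}\max_j(|\alpha_j|+|\beta_j|)$ rather than a generic $C\varepsilon$ bound requires controlling the critical points of $p$ finely. This is where the averaging factor $1/J$ and the symmetry \eqref{eq:ass2} must be exploited, rather than the cruder Gauss--Lucas enclosure, which only delivers the qualitative clustering needed to separate the spectrum into the disk and the isolated dominant eigenvalue of Theorem~\ref{thm_spec}.
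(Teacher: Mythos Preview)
The paper does not prove this statement: it is explicitly \emph{recalled} as Proposition~4.1 from \cite{lafitte2012} and then used as a black box in the Rouch\'e argument for Theorem~\ref{thm_spec}. So there is no proof in the present paper to compare against directly.

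That said, your proposal is sound and in fact parallels the paper's treatment of the more general matrix $A=MP+\varepsilon D$. The characteristic polynomial via the matrix-determinant lemma is exactly the $M=\Id$ specialisation of the arrow-matrix determinant computed in the preceding lemma (as you note), and your expansion of the dominant eigenvalue by substituting a power series into the secular equation $g(\lambda)=1$ is the same mechanism the paper employs in Step~(ii) of the proof of Theorem~\ref{thm_spec}, only without the extra $\varepsilon^\gamma/v_j$ terms. Your caveat about the $1/J$, $1/J^2$ factors and the signs is apt: these reflect the normalisation conventions of \cite{lafitte2012}, and indeed Theorem~\ref{thm_spec} in the present paper carries no extra $1/J$ and the opposite sign on the $\varepsilon\langle\alpha\rangle$ term.

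For the clustered eigenvalues your route is genuinely different from what the paper does (for $A$, and presumably what \cite{lafitte2012} does for $A_0$). There the localisation comes from a direct Rouch\'e estimate on a circle $|\lambda|=\varepsilon r$, expanding the relevant rational function in powers of $1/r$ and choosing $r$ explicitly. Your approach---rescale $\lambda=\varepsilon\mu$, identify the limiting roots as the critical points of $p(\mu)=\prod_j(D_j-\mu)$, and enclose them via Gauss--Lucas---is more geometric and delivers the qualitative $\Oh(\varepsilon)$ clustering cleanly. But, as you correctly diagnose, Gauss--Lucas only places the critical points in the convex hull of $\{D_j\}$, which gives a radius $\max_j(|\alpha_j|+|\beta_j|)$ without the $1/J$. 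Recovering the sharp constant $\tfrac{\varepsilon}{J}\max_j(|\alpha_j|+|\beta_j|)$ requires the explicit Rouch\'e bookkeeping of \cite{lafitte2012} (mirrored in Step~(i) of the proof of Theorem~\ref{thm_spec} here), so your self-assessment of where the obstacle lies is accurate.
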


Since we know how to localize the roots of $\chi_{A_0}$, we can use Rouch\'e's
theorem \cite{Ulrich2008} to bound the eigenvalues of $\chi_A$.
\begin{prop}[Rouch\'e's theorem] If there exists a closed simple
contour $\zeta$ in $\mathbb{C}$ encircling a compact $\mathcal{C}$, such that
\begin{equation}
\forall \lambda \in \zeta, \chi_{A_0}(\lambda)\neq 0\quad\text{and}\quad |\chi_A(\lambda)-\chi_{A_0}(\lambda)|<|\chi_{A_0}(\lambda)|,\label{rouche}
\end{equation}
then $\chi_A$ and $\chi_{A_0}$ have exactly the same number of roots in $\mathcal{C}$.
\end{prop}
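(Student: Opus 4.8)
The plan is to deduce Rouch\'e's theorem from the argument principle, using that $\chi_A$ and $\chi_{A_0}$ are polynomials, hence entire functions with only finitely many zeros. First I would note that the hypotheses already force $\chi_A$ to be zero-free on $\zeta$: if $\chi_A(\lambda)=0$ for some $\lambda\in\zeta$, then $|\chi_{A_0}(\lambda)|=|\chi_A(\lambda)-\chi_{A_0}(\lambda)|<|\chi_{A_0}(\lambda)|$, a contradiction. Consequently the ratio $\phi:=\chi_A/\chi_{A_0}$ is well defined, holomorphic and non-vanishing on an open neighbourhood of the curve $\zeta$.

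Next, by the argument principle applied along the positively oriented Jordan curve $\zeta$, the number of zeros of $\chi_A$, respectively of $\chi_{A_0}$, enclosed by $\zeta$ and counted with multiplicity, equals $\frac{1}{2\pi\imag}\oint_\zeta \frac{\chi_A'}{\chi_A}\,\d\lambda$, respectively $\frac{1}{2\pi\imag}\oint_\zeta \frac{\chi_{A_0}'}{\chi_{A_0}}\,\d\lambda$. Subtracting these and using the identity $\frac{\chi_A'}{\chi_A}-\frac{\chi_{A_0}'}{\chi_{A_0}}=\frac{\phi'}{\phi}$, the difference of the two zero counts equals $\frac{1}{2\pi\imag}\oint_\zeta \frac{\phi'}{\phi}\,\d\lambda$, which is exactly the winding number of the closed image curve $\phi\circ\zeta$ about the origin.

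Then I would observe that the strict inequality $|\chi_A-\chi_{A_0}|<|\chi_{A_0}|$ on $\zeta$ is equivalent to $|\phi(\lambda)-1|<1$ for every $\lambda\in\zeta$, i.e.\ the closed curve $\phi\circ\zeta$ lies inside the open disk $\mathcal{D}(1,1)$. Since this disk is simply connected and does not contain $0$, any closed curve lying in it has winding number $0$ about the origin; hence the difference of the two zero counts vanishes, which is the assertion. An equivalent and slightly tidier packaging is the homotopy argument: for $t\in[0,1]$ put $h_t:=\chi_{A_0}+t(\chi_A-\chi_{A_0})$; the inequality gives $|h_t|\ge|\chi_{A_0}|-t|\chi_A-\chi_{A_0}|>0$ on $\zeta$, so $N(t):=\frac{1}{2\pi\imag}\oint_\zeta h_t'/h_t\,\d\lambda$ is a continuous, integer-valued, hence constant, function of $t$, and $N(0)=N(1)$.

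The only delicate point --- and the main obstacle, modest as it is --- is to make the topological hypothesis ``$\zeta$ encircles the compact $\mathcal{C}$'' precise enough to invoke the argument principle: one wants $\zeta$ to be a rectifiable (say piecewise $C^1$) Jordan curve with winding number $1$ about each point of $\mathcal{C}$, so that its interior is a well-defined bounded region and the zeros being compared are precisely those in $\mathcal{C}$ (equivalently, in the interior of $\zeta$), with none on $\zeta$. In the applications that follow this is harmless, since $\chi_A$ and $\chi_{A_0}$ have finitely many roots and the theorem is used only with explicit circular contours around the small disk and around the simple dominant eigenvalue $\lambda(\varepsilon)$.
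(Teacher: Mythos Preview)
Your argument is correct and is the standard textbook proof of Rouch\'e's theorem via the argument principle (or, equivalently, via the homotopy $h_t$ you describe). There is nothing to compare against: the paper does not prove this proposition at all, it merely quotes Rouch\'e's theorem from the literature (with a citation) as a tool to be applied in the proof of the surrounding spectral theorem. So your proposal supplies a proof where the paper gives none, and the route you take is the expected one.
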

Everything is now in place to prove theorem~\ref{thm_spec}.

\begin{proof}[Proof of theorem \ref{thm_spec}]
	The proof consists of two steps.  First, we will construct, using Rouch\'e's theorem, contours in which the eigenvalues of $\chi_{A}$ are known to be localized.  In a second step, we will provide an asymptotic expansion for the dominant eigenvalue.

\paragraph{Step (i): Localization of eigenvalues}
We start by writing
\begin{equation}
\chi_A(\lambda)=\chi_{A_0}(\lambda)+\dfrac{\varepsilon^\gamma}{J}\sum_{j=1}^{J}\dfrac{R_j(\lambda)}{v_j},
\end{equation}
and aim at applying Rouch\'e's theorem.
We thus study the rational function
\begin{equation}
\mathcal{F}:\lambda\mapsto
\dfrac{\chi_A(\lambda)-\chi_{A_0}(\lambda)}{\chi_{A_0}(\lambda)}=-\dfrac{\dfrac{\varepsilon^\gamma}{J}
\displaystyle\sum_{j=1}^{J}\dfrac{1}{v_j}\dfrac{1}{\lambda-\varepsilon
 D_j}}{1-\dfrac{1}{J}\displaystyle\sum_{j=1}^{J}\dfrac{1}{\lambda-\varepsilon
 D_j}}
\end{equation}
and look for contours that contain the eigenvalues of $\chi_{A_0}$ and for which $\left|\mathcal{F}(\zeta)\right|<1$.
\begin{itemize}
\item Let us first consider the dominant eigenvalue by enclosing the dominant eigenvalues of $\chi_{A_0}$ in a circle around $\lambda=1$. To this end, we search a value of $r>0$ such that \eqref{rouche} is satisfied on $\zeta=\{1+\eps^{\gamma+1}
re^{\imag\theta},\,\theta\in[0,2\pi)\}$.
Performing a Taylor expansion of $1/(1+\eps^{\gamma+1}r e^{\imag\theta} -\eps D_j )$
in terms of $\eps$ yields
\begin{eqnarray*} 
\mathcal{F}(1+\varepsilon^{\gamma+1}r e^{\imag\theta}) &=& \dfrac{
-\dfrac{\varepsilon^\gamma}{J} \displaystyle\sum_{j=1}^J
\dfrac{1}{v_j}\left(\varepsilon D_j -\varepsilon^{\gamma+1} r e^{\imag\theta}\right)+
\Oh(\varepsilon^2) }{1-\dfrac{1}{J}\sum _{j=1}^J 1-\varepsilon^{\gamma+1}r
e^{\imag\theta}+\varepsilon D_j +\Oh(\varepsilon^2)}\\\\
&=& \dfrac{ -\dfrac{\varepsilon^\gamma}{J} \sum_{j=1}^J \dfrac{D_j}{v_j}
+\Oh(\varepsilon)}{\varepsilon^\gamma r e^{\imag\theta}-\dfrac{1}{J}\sum_{j=1}^J D_j
+\Oh(\varepsilon)}
\end{eqnarray*}
where we have used the fact that $\dfrac{1}{J}\sum_j v_j=0$. 
When choosing $r$ such that
$r>\dfrac{1}{J}\left|\sum_{j=1}^{J}D_j\right|+\dfrac{2}{J}\left|\sum_{j=1}^{J}\dfrac{D_j}{v_j}\right|$, we ensure that
  $|\mathcal{F}(\lambda)|<1/2+\Oh(\eps)$,
from which one can conclude that $\chi_A$ and $\chi_{A_0}$ have the same number of zeroes, that is, $1$,
around $\lambda=1$ in a neighbourhood of size $\eps^{\gamma+1}$.

\item Let us now consider the $J-1$ remaining eigenvalues by considering the region around $\lambda=0$.
Again, we will make use of Rouch\'e's theorem: let us find $r>0$ such that \eqref{rouche} is satisfied on
$\zeta=\lbrace{\varepsilon r e^{\imag\theta},\theta\in[0,2\pi)\rbrace}$.
We thus study
\begin{equation}
\mathcal{F}(\varepsilon  r e^{\imag\theta})=-(\Fact) \dfrac{\phi(r)}{\varepsilon-\psi(r)}
\end{equation}
with
\begin{equation}
\psi:r\mapsto \dfrac{1}{J}\displaystyle\sum_{j=1}^{J}\dfrac{1}{r
e^{\imag\theta}-D_j},\qquad \phi(r):r\mapsto
\dfrac{1}{J}\displaystyle\sum_{j=1}^{J}\dfrac{1}{v_j}\dfrac{1}{r
e^{\imag\theta}-D_j}.
\end{equation}
Performing a Taylor expansion  of $\phi(r)$ and $\psi(r)$ in $1/r$ yields:
\begin{equation}
\psi(r)=\dfrac{e^{-\imag\theta}}{r}+{\Oh}\left(\dfrac{1}{r^2}\right),\quad
\phi(r)=\dfrac{e^{-2\imag\theta}}{Jr^2}\displaystyle\sum_{j=1}^{J}
\dfrac{D_j}{v_j}+{\Oh}\left(\dfrac{1}{r^3}\right),
\end{equation}
so
\begin{equation}
\mathcal{F}(\varepsilon r
e^{\imag\theta})=-(\Fact)\dfrac{e^{-\imag 2\theta}}{r^2}\dfrac{\dfrac{1}{J}\displaystyle\sum_{j=1}^{J}\dfrac{D_j}{v_j}+\Oh\left(\dfrac{1}{r}\right)}{\varepsilon-\dfrac{e^{-\imag\theta}}{r}+\Oh\left(\dfrac{1}{r^2}\right)}=(\Fact)\dfrac{e^{-\imag\theta}}{r}\dfrac{1}{J}\displaystyle
\sum_{j=1}^{J}\dfrac{D_j}{v_j}(1+\varepsilon r e^{\imag\theta}+\Oh(\varepsilon^2)).
\end{equation}

Choose $r>2\max \left(\dfrac{\varepsilon^\gamma}{J}\Bigg|\displaystyle
\sum_{j=1}^{J}\dfrac{D_j}{v_j}\Bigg|,1\right)$  to ensure that the main term
in the Taylor expansion is in modulus less than $1/2$. Thus we can conclude that
there are exactly $J-1$ eigenvalues in a neighbourhood of $\lambda=0$ of size $\varepsilon$.

\end{itemize}

\paragraph{Step (ii): Asymptotic expansion of dominant eigenvalue}
To obtain an asymptotic expansion of the dominant eigenvalue, we first define
\begin{equation}
 S_j := \dfrac{1+ \varepsilon^\gamma/v_j}{\lambda -\varepsilon D_j} +
 \dfrac{1 - \varepsilon^\gamma/v_j}{\lambda - \varepsilon\bar{D}_j},\quad
 \text{as well as}\quad \Sigma(\lambda(\eps)):=\frac{1}{J}\sum_{j=1}^{J/2}S_j.
\end{equation}

Now, given that $\lambda(\eps)$ (close to $1$) is a root of the characteristic polynomial $\chi_A(\lambda)$, we have $\Sigma(\lambda(\eps))=1$.
We therefore perform a Taylor expansion of $\Sigma(\lambda(\eps)$ around
$\varepsilon=0$.  We split $\lambda$ in its real and imaginary part:
$\lambda(\eps)=x(\eps)+\imath\; y(\eps)$ and proceed by requiring (up to second
order) \begin{equation} \Sigma(\lambda(\epsilon))\equiv\Sigma(\varepsilon)=\Sigma(0)+\varepsilon\Sigma'(0)+\frac{\varepsilon^2}{2}\Sigma''(0)=1.
\end{equation}
Matching, for all powers of $\eps$ the real and imaginary parts of the left and
right hand side, yields the conditions: $\Re(\Sigma(0))=1, \Im(\Sigma(0))=0$ and
$\Re(\Sigma^{(j)}(0))=\Im(\Sigma^{(j)}(0))=0\quad \forall j\geq 1$.
From these conditions, asymptotic expansions of $x(\eps)$ and $y(\eps)$ in terms
of $\eps$ can be derived.  Let us write
\begin{equation*}
	x(\eps)=x_0 + \eps x_1 + \eps^2 x_2 + \Oh(\eps^3), \qquad y(\eps)=y_0 + \eps y_1 + \eps^2 y_2 + \Oh(\eps^3)
	\end{equation*}
Then, we get, for the zeroth order term,
 \begin{equation}
\dfrac{2}{J} \displaystyle\sum_{j=1}^{J/2} \dfrac{x_0}{x_0^2+y_0^2}=1 \qquad
-\dfrac{2}{J}\displaystyle\sum_{j=1}^{J/2}
\dfrac{y_0\left(1+\Fact/v_j\right)}{x_0^2+y_0^2}=0,
 \end{equation}
 which implies that $x_0=1$ and $y_0=0$.  (This is consistent with the derivation based on Rouch\'e's theorem above.)

Next, we determine the terms of order $\eps$,
\begin{equation}
\Re(\Sigma)(0)=\displaystyle-\dfrac{2}{J}\sum_{j=1}^{J/2}x_1-\alpha_j \qquad
\Im(\Sigma)(0)=\dfrac{2}{J}\displaystyle\sum_{j=1}^{J/2}
\left(-y_1+\delta_\gamma\dfrac{\beta_j}{v_j}\right),
 \end{equation}
from which we conclude that  $x_1=\langle \alpha\rangle$ and
$y_1=\delta_\gamma\langle \beta/v\rangle$. Finally, for the second order terms, we
find $$
\begin{cases}
\displaystyle\sum_{j=1}^{J/2} -2x_2+4(\langle
\alpha\rangle-\alpha_j)^2-4(y_1^2+\beta_j^2)+8\delta_\gamma
y_1\dfrac{\beta_j}{v_j}=0\\
\displaystyle\sum_{j=1}^{J/2}-2y_2+4\delta_\gamma\left(\delta_\gamma\Big\langle\dfrac{\beta}{v}\Big\rangle-\beta_j\right)(\langle
\alpha\rangle -\alpha_j)+\delta_{\gamma-1} \dfrac{\beta_j}{v_j}=0,
\end{cases}
$$
from which we  conclude that
\begin{eqnarray}
x_2&=&2\left\langle
	\left(
		\left\langle\alpha\right\rangle-\alpha
	\right)^2
	-\beta^2
	+\delta_\gamma\left\langle\dfrac{\beta}{v}\right\rangle^2
	\right\rangle\\
 y_2&=&2\delta_\gamma\Big\langle \left(\Big\langle \dfrac{\beta}{v}
 \Big\rangle-\beta\right)(\langle \alpha\rangle -\alpha)
 \Big\rangle+2\delta_{\gamma-1}\Big\langle \dfrac{\beta}{v}\Big\rangle
 \end{eqnarray}
 Combining all terms concludes the proof.
\end{proof}

As an immediate consequence of the above theorem, we have:
\begin{cor}
Under the assumptions in section~\ref{sec:notation}, the spectrum of the matrix
$\mathcal{S}_{\delta t}$, corresponding to the Fourier-transformed forward Euler
time-stepper for the kinetic equation~\eqref{kin_eq} (defined in equation~\eqref{eq:fourier_fe})  is
located in two clusters
    \begin{equation*}
      \emph{Sp}(S_{\delta t})\subset \mathcal{D}_1 \cup \mathcal{D}_2
\end{equation*}
with
\begin{equation*}\mathcal{D}_1=\{\tau_{\delta t}\}\text{  and  }   \mathcal{D}_2=\mathcal{D}\left(1-\dfrac{\delta
		t}{\varepsilon^{\gamma+1}},\dfrac{\delta
		t}{J\varepsilon^\gamma}\max_{j\in\mathcal{J}}(|\alpha_j|+\beta_j|)\right)
    \end{equation*}
The dominant eigenvalue $\tau_{\delta t}$ is simple
and can be expanded as
\begin{eqnarray}
	\tau_{\delta t}&=& \left(1-\dfrac{\delta
	t}{\varepsilon^{\gamma+1}}\right)+\dfrac{\delta t}{\varepsilon^{\gamma+1}}\left(1+\eps\langle \alpha\rangle+\eps^2\left(\langle (\langle
\alpha\rangle -\alpha)^2 \rangle-\langle\beta^2 \rangle+\delta_\gamma
\Big\langle\dfrac{\beta}{v} \Big\rangle^2\right)\right)\\
&+\imag& \dfrac{\delta t}{\varepsilon^{\gamma+1}}\left(\eps \delta_\gamma \Big\langle
\dfrac{\beta}{v}\Big\rangle +\eps^2 \left(\delta_\gamma\Bigg\langle \left(\Big\langle \dfrac{\beta}{v}\Big\rangle
-\beta\right)\left(\langle\alpha \rangle-\alpha\right)
\Bigg\rangle+\delta_{\gamma-1}\Big\langle\dfrac{\beta}{v} \Big\rangle\right)\right)+\oh(\varepsilon^{1-\gamma}).
\end{eqnarray}
\label{cor:eig}
\end{cor}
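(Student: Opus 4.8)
The plan is to deduce the statement directly from Theorem~\ref{thm_spec} by pushing the spectrum of $A=MP+\varepsilon D$ forward through the affine relation recorded just below~\eqref{eq:fourier_fe}, namely $\tau_j=\bigl(1-\tfrac{\delta t}{\varepsilon^{\gamma+1}}\bigr)+\tfrac{\delta t}{\varepsilon^{\gamma+1}}\lambda_j$ for $1\le j\le J$. Introduce the map $\Phi(z):=\bigl(1-\tfrac{\delta t}{\varepsilon^{\gamma+1}}\bigr)+\tfrac{\delta t}{\varepsilon^{\gamma+1}}z$. Since $S_{\delta t}=\bigl(1-\tfrac{\delta t}{\varepsilon^{\gamma+1}}\bigr)\mathcal{I}+\tfrac{\delta t}{\varepsilon^{\gamma+1}}A$, the eigenvalues of $S_{\delta t}$ are exactly $\Phi(\mathrm{Sp}(A))$, with the same eigenvectors; in particular algebraic multiplicities are preserved. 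Because $\delta t>0$ is fixed and nonzero, $\Phi$ is an invertible affine transformation of $\mathbb{C}$, hence a bijection, and it sends a disk $\mathcal{D}(z_0,\rho)$ to $\mathcal{D}\bigl(\Phi(z_0),\tfrac{\delta t}{\varepsilon^{\gamma+1}}\rho\bigr)$.

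First I would apply $\Phi$ to the localization in Theorem~\ref{thm_spec}. The cluster $\mathcal{D}\bigl(0,\varepsilon C\max_{j\in\mathcal{J}^+}(|\alpha_j|+|\beta_j|)\bigr)$ is carried to the disk centred at $\Phi(0)=1-\tfrac{\delta t}{\varepsilon^{\gamma+1}}$ with radius $\tfrac{\delta t}{\varepsilon^{\gamma+1}}\cdot\varepsilon C\max_j(|\alpha_j|+|\beta_j|)=\tfrac{\delta t\,C}{\varepsilon^{\gamma}}\max_j(|\alpha_j|+|\beta_j|)$; invoking the velocity symmetry~\eqref{eq:ass2} one has $|\alpha_{J-j}|+|\beta_{J-j}|=|\alpha_j|+|\beta_j|$, so the maximum over $\mathcal{J}^+$ may be taken over $\mathcal{J}$, and with the localization constant inherited from Theorem~\ref{thm_spec} (written $C=1/J$ in the statement, consistently with Theorem~\ref{thm:lafitte}) this reproduces $\mathcal{D}_2$. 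The dominant eigenvalue $\lambda(\varepsilon)$, which Theorem~\ref{thm_spec} asserts is simple, is sent to the single value $\tau_{\delta t}=\Phi(\lambda(\varepsilon))$, which is therefore a simple eigenvalue of $S_{\delta t}$ and forms the singleton cluster $\mathcal{D}_1$.

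It then remains to substitute the asymptotic expansion of $\lambda(\varepsilon)=\Re(\lambda(\varepsilon))+\imath\,\Im(\lambda(\varepsilon))$ from Theorem~\ref{thm_spec} into $\tau_{\delta t}=\bigl(1-\tfrac{\delta t}{\varepsilon^{\gamma+1}}\bigr)+\tfrac{\delta t}{\varepsilon^{\gamma+1}}\bigl(\Re(\lambda(\varepsilon))+\imath\,\Im(\lambda(\varepsilon))\bigr)$. The $\mathcal{O}(1)$ terms combine to $\bigl(1-\tfrac{\delta t}{\varepsilon^{\gamma+1}}\bigr)+\tfrac{\delta t}{\varepsilon^{\gamma+1}}$, while every $\varepsilon$- and $\varepsilon^2$-coefficient acquires the prefactor $\tfrac{\delta t}{\varepsilon^{\gamma+1}}$; the real part of $\lambda(\varepsilon)$ produces the first bracketed group and the imaginary part the second, which is precisely the displayed formula for $\tau_{\delta t}$. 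For the remainder, $\Re(\lambda(\varepsilon))$ and $\Im(\lambda(\varepsilon))$ carry an $o(\varepsilon^2)$ error, and since $\delta t=\mathcal{O}(\varepsilon^{\gamma+1})$ (the condition needed for stability of the inner forward Euler integrator, cf.~\eqref{eq:dt}) the factor $\tfrac{\delta t}{\varepsilon^{\gamma+1}}$ is bounded, so the propagated error is $o(\varepsilon^2)\subseteq o(\varepsilon^{1-\gamma})$ for $\gamma\in\{0,1\}$, as claimed.

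Since this is essentially an invertible affine change of variables applied to Theorem~\ref{thm_spec}, I do not anticipate a genuine obstacle. The only points that need a little care are verifying that $\Phi$ preserves both the disk structure and the simplicity of the dominant eigenvalue, and reconciling the bookkeeping between the two statements: the prefactor $\delta t/\varepsilon^{\gamma+1}$ appearing in the radius of $\mathcal{D}_2$ and in every term of the expansion, the relabelling $\mathcal{J}^+\to\mathcal{J}$ via~\eqref{eq:ass2}, and the identification of the localization constant with $1/J$.
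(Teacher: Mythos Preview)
Your proposal is correct and matches the paper's approach: the corollary is presented there as an immediate consequence of Theorem~\ref{thm_spec} via the affine relation $\tau_j=\bigl(1-\tfrac{\delta t}{\varepsilon^{\gamma+1}}\bigr)+\tfrac{\delta t}{\varepsilon^{\gamma+1}}\lambda_j$ recorded after~\eqref{eq:fourier_fe}, and your argument simply makes that push-forward explicit, including the bookkeeping on the disk radius and the identification $C=1/J$ consistent with Theorem~\ref{thm:lafitte}.
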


These spectra are illustrated in figure \ref{fig:spectra_fe}, where we have
plotted the spectra of the amplification factor of the time-stepper
$\mathcal{S}_{\delta t}$ in the spatial domain (see equation~\eqref{scheme_time-stepper}) for several choices of $\delta t$ and for both the
parabolic and the hyperbolic scaling.

\begin{figure}
\includegraphics[width=\textwidth]{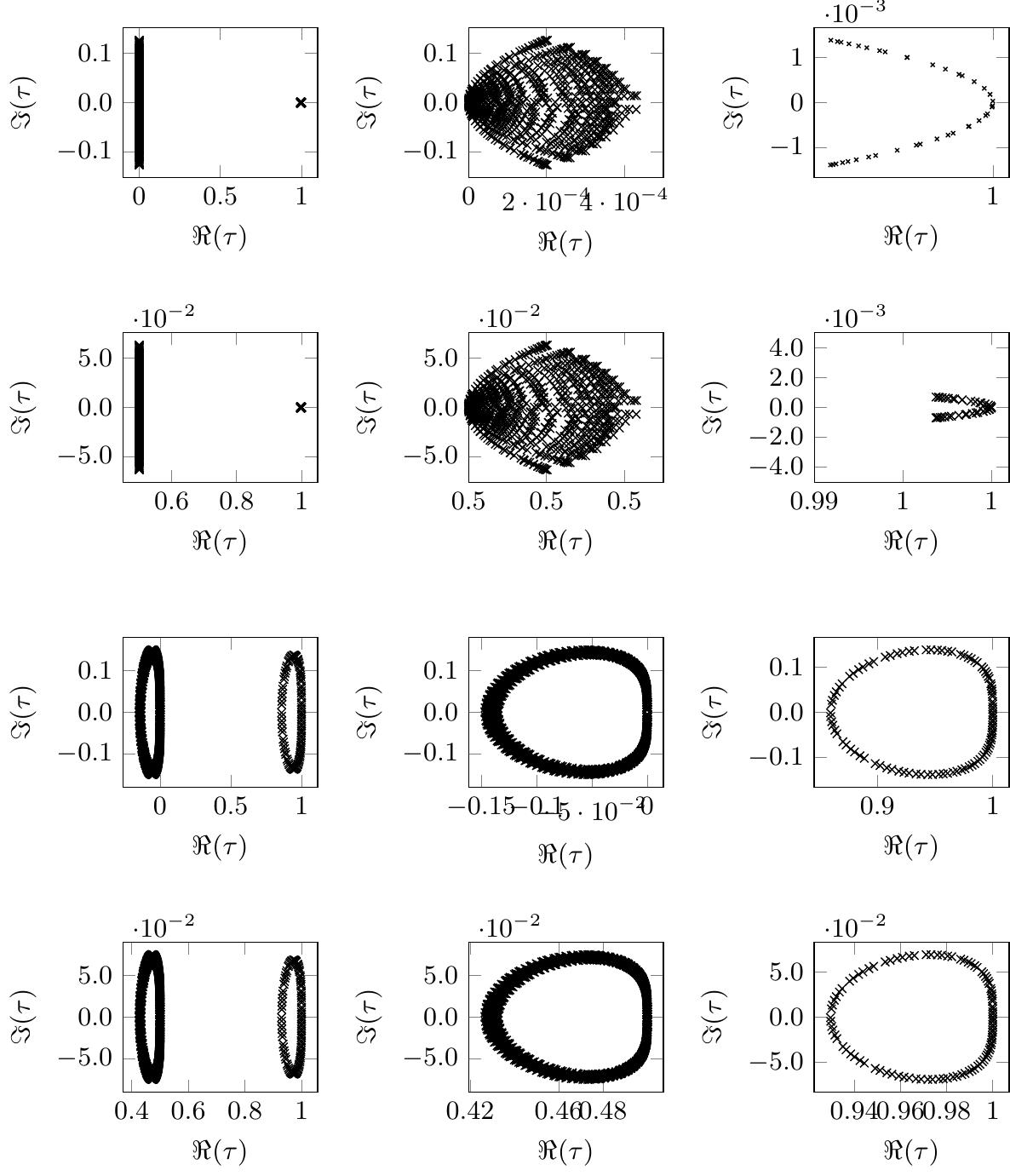}
		\caption{Spectrum of the time-stepper $\mathcal{S}_{\delta t}$. The first two
		rows correspond to a parabolic scaling ($\gamma=1$); the last two rows to a
	hyperbolic scaling ($\gamma=0$). On each row: Left: a global view of the spectrum; Middle
	and right: zoom to each of the eigenvalue clusters around $0$ and $1$. Parameter values are
	$K=3,\Delta t=\num{1e-2},\Delta x=0.1, J=20$ and $\delta
	t=\varepsilon^{\gamma+1}$ (first row) and $\delta
	t=\num{5e-1}\varepsilon^{\gamma+1}$ (second row)\label{spec_1}.\label{fig:spectra_fe} }
\end{figure}

\subsection{Parameter choices for projective integration}\label{sec:param}

Based on the expressions for the spectrum of the inner time-stepper \eqref{scheme_time-stepper} in corollary~\ref{cor:eig} and the stability regions of the projective Runge--Kutta methods in theorem~\ref{thm:stab}, we can determine parameter values $\delta t$, $\Delta t$ and $K$ for which the projective Runge--Kutta methods are stable.
We first observe from corollary \ref{cor:eig} that we need 
  $$\delta t =\varepsilon^{\gamma+1}$$ to center the fast eigenvalues of the inner time-stepper (corresponding to the region $\mathcal{D}_2$) around
  the origin to contain them in the stability region $\mathcal{R}_2$ (see theorem~\ref{thm:stab}).
\begin{rem}[Spatial mesh width] As observed in \cite{lafitte2012}, we remark that this choice induces a restriction on the spatial mesh width to ensure stability of the inner integrator. Specifically, we require
	\[
	\dfrac{\varepsilon}{J}\max_j\left(\left|\alpha_j\right| +\left|\beta_j\right| \right)\le 1,
	\]
	from which, using~\eqref{eq:alpha-upwind} or~\eqref{eq:alpha-central},
        it follows that $\Delta x \ge C v_J\varepsilon$.  However, since we
        consider the limit when $\varepsilon$ tends to $0$ for fixed $\Delta x$,
        as we are interested in Asymptotic Preserving schemes, this is not a problematic restriction.
\end{rem}

Next, we have to determine $\Delta t$ such that the slow
  eigenvalues are captured in $\mathcal{R}_1$ and choose $K$ in such a way that the stability region $\mathcal{R}_2$ is large enough to contain all fast eigenvalues.  We have the following conditions:

\begin{thm}[Stability of projective Runge--Kutta methods]
	When using an inner integrator~\eqref{scheme_time-stepper} for the kinetic equation~\eqref{kin_eq} with time step $\delta t=\varepsilon^{\gamma+1}$, a projective Runge--Kutta method~\eqref{eq:rk1}-\eqref{eq:rk3} is stable if
 the macroscopic time step $\Delta t$ satisfies
	  \begin{equation}\label{eq:Dt}
	   	\Delta t\leq \varepsilon^\gamma b,
	  \end{equation}
and the integer $K$ that determines the number $K+1$ of inner steps satisfies	   \begin{align}
	  	K&\geq
	\dfrac{1}{1+\dfrac{\log(\max_j(|\alpha_j|+|\beta_j|)/J)}{\log(\varepsilon)}}-\dfrac{\log(b)}{\log(\varepsilon)+\log(\max_j(|\alpha_j|+|\beta_j|)/J)}\label{eq:K},\\
	b&=\min\left(\dfrac{2}{-\langle \alpha\rangle
-\varepsilon(\Big\langle(\malph -\alpha)^2-\beta^2
\Big\rangle-\delta_\gamma\mbet^2)}, \dfrac{1}{\left|\delta_\gamma\mbet
	+\eps\left[\delta_\gamma\left(\mbet-\beta\right)(\malph-\alpha)+\delta_{\gamma-1}\mbet\right]\right|}\right). \label{eq:b}
	  \end{align}
	\label{thm:stabcond}
\end{thm}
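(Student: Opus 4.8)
The plan is to combine the two previously established building blocks: Corollary~\ref{cor:eig}, which locates the spectrum of the inner time-stepper $\mathcal{S}_{\delta t}$ in two clusters $\mathcal{D}_1 \cup \mathcal{D}_2$ with $\delta t = \varepsilon^{\gamma+1}$, and Theorem~\ref{thm:stab} together with the disk description~\eqref{eq:stab_pfe}, which tells us that a projective Runge--Kutta method is stable as long as the amplification factors $\tau$ of the inner integrator lie in $\mathcal{D}_1^{PFE} \cup \mathcal{D}_2^{PFE}$ (the PRK stability regions $\mathcal{R}_1, \mathcal{R}_2$ contain these disks, and the convexity/consistency assumptions on the Butcher tableau are assumed to hold). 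So the whole proof reduces to: \emph{choose $\Delta t$ and $K$ so that the cluster $\mathcal{D}_1$ lands inside $\mathcal{D}_1^{PFE}$ and the cluster $\mathcal{D}_2$ lands inside $\mathcal{D}_2^{PFE}$.}

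First I would handle the slow cluster $\mathcal{D}_1 = \{\tau_{\delta t}\}$. Using the expansion of $\tau_{\delta t}$ from Corollary~\ref{cor:eig} and the definition $\tau_j = (1-\delta t/\varepsilon^{\gamma+1}) + (\delta t/\varepsilon^{\gamma+1})\lambda_j$ with $\delta t = \varepsilon^{\gamma+1}$, we get simply $\tau_{\delta t} = \lambda(\varepsilon)$, so $\tau_{\delta t} - 1 = \varepsilon\langle\alpha\rangle + \imath\,\varepsilon\delta_\gamma\langle\beta/v\rangle + O(\varepsilon^2)$ — more precisely the full second-order expression from Theorem~\ref{thm_spec}. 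The disk $\mathcal{D}_1^{PFE} = \mathcal{D}(1-\delta t/\Delta t, \delta t/\Delta t)$ is centered near $1$ with radius $\delta t/\Delta t = \varepsilon^{\gamma+1}/\Delta t$; equivalently it is the set of $\tau$ with $|(M+1)\tau - M| \le 1$ where $M = (\Delta t - (K+1)\delta t)/\delta t \approx \Delta t/\varepsilon^{\gamma+1}$. Plugging $\tau = \tau_{\delta t}$ into this condition and expanding in $\varepsilon$, the requirement $|(M+1)\tau_{\delta t} - M|\le 1$ becomes, to leading order, $|1 + M(\tau_{\delta t}-1)| \le 1$, i.e.\ $|1 + (\Delta t/\varepsilon^{\gamma+1})\cdot \varepsilon^{1-\gamma}(\text{Re part}) + \imath(\Delta t/\varepsilon^{\gamma+1})\cdot\varepsilon^{1-\gamma}\delta_\gamma\langle\beta/v\rangle|\le 1$. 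Writing $\Delta t = \varepsilon^\gamma b$ makes $M(\tau_{\delta t}-1)$ an $O(\varepsilon)$-accurate expression whose real part is $b(\langle\alpha\rangle + \varepsilon(\cdots))$ and whose imaginary part is $b(\delta_\gamma\langle\beta/v\rangle + \varepsilon(\cdots))$. Since $\langle\alpha\rangle \le 0$ (from~\eqref{eq:alpha-central}/\eqref{eq:alpha-upwind}) the real contribution pushes toward the disk; imposing separately that the real perturbation is $\ge -2$ and the imaginary perturbation is $\le 1$ in absolute value — which is exactly what the two arguments of the $\min$ in~\eqref{eq:b} encode — guarantees $|1 + M(\tau_{\delta t}-1)|^2 \le 1$ up to the controlled higher-order terms. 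This yields the bound $\Delta t \le \varepsilon^\gamma b$.

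Next I would handle the fast cluster $\mathcal{D}_2 = \mathcal{D}(1-\delta t/\varepsilon^{\gamma+1}, (\delta t/(J\varepsilon^\gamma))\max_j(|\alpha_j|+|\beta_j|))$. With $\delta t = \varepsilon^{\gamma+1}$ this is a disk centered at the origin with radius $\rho := \varepsilon\,\max_j(|\alpha_j|+|\beta_j|)/J$. We need $\mathcal{D}_2 \subseteq \mathcal{D}_2^{PFE} = \mathcal{D}(0, (\delta t/\Delta t)^{1/K})$, i.e.\ $\rho \le (\delta t/\Delta t)^{1/K} = (\varepsilon^{\gamma+1}/\Delta t)^{1/K}$. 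Substituting $\Delta t = \varepsilon^\gamma b$ gives $\rho \le (\varepsilon/b)^{1/K}$, so $\rho^K \le \varepsilon/b$, i.e.\ $K\log\rho \le \log\varepsilon - \log b$. Writing $\rho = \varepsilon\,c$ with $c = \max_j(|\alpha_j|+|\beta_j|)/J$, this is $K(\log\varepsilon + \log c) \le \log\varepsilon - \log b$; dividing by $\log\varepsilon + \log c$ (which is negative for $\varepsilon$ small enough, so the inequality flips) produces exactly~\eqref{eq:K} after dividing numerator and denominator through by $\log\varepsilon$. I would be careful with the direction of the inequality and with the sign of $\log\varepsilon + \log c$, noting that for $\varepsilon$ small this quantity is negative and that $K$ must accordingly be taken large enough; this is the only slightly delicate bookkeeping in the argument.

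The main obstacle I anticipate is not conceptual but the careful control of the $O(\varepsilon^2)$ (and higher) remainder terms when inserting $\tau_{\delta t}$ into the exact boundary condition $|(M+1)\tau - M|\le 1$ for $\mathcal{D}_1^{PFE}$: one must check that the first- and second-order terms retained in the definition of $b$ in~\eqref{eq:b} genuinely dominate the neglected tail uniformly as $\varepsilon \to 0$, and that the two-sided estimate $M_s \le c_s M \le M$ together with $|\tau|\le1$ keeps all the intermediate PRK stages inside the region (this last point is exactly Theorem~\ref{thm:stab}, so it can be invoked rather than re-proved). A secondary subtlety is that $M = (\Delta t - (K+1)\delta t)/\delta t$ is not exactly $\Delta t/\delta t$; since $(K+1)\delta t = (K+1)\varepsilon^{\gamma+1} = o(\varepsilon^\gamma) = o(\Delta t)$ for the $K$ chosen, the difference is lower-order and absorbed into the remainder, but this should be stated. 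Modulo these estimates, the theorem follows by assembling the two containments with Theorem~\ref{thm:stab}.
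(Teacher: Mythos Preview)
Your proposal is correct and follows essentially the same route as the paper: reduce to the projective forward Euler stability disks via Theorem~\ref{thm:stab}, derive the $\Delta t$ bound~\eqref{eq:Dt} by forcing $\tau_{\delta t}\in\mathcal{D}_1^{PFE}$ through separate real and imaginary constraints (which is exactly how the two arguments of the $\min$ in~\eqref{eq:b} arise), and derive the $K$ bound~\eqref{eq:K} from $\rho\le(\delta t/\Delta t)^{1/K}$. You are in fact more scrupulous than the paper about the $O(\varepsilon^2)$ remainders and the discrepancy between $M$ and $\Delta t/\delta t$; the paper simply writes the real/imaginary containment conditions and reads off the bound without dwelling on those lower-order corrections.
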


Before proceeding to the proof, we make a few observations on the macroscopic
time step $\Delta t$. At first, consider the hyperbolic scaling ($\gamma=0$). In
this regime, the macroscopic time step $\Delta t$ is seen to be independent of
$\varepsilon$ when $\varepsilon$ tends to $0$.  Moreover, since the coefficients
$\alpha$ and $\beta$ depend on $1/\Delta x$, the inequality in condition~\eqref{eq:Dt} will
result in a CFL-type condition of the form $\Delta t \le C\Delta x$.  Now
consider the parabolic scaling ($\gamma=1$). In that case, the first term in
equation~\eqref{eq:Dt} can only be bounded independently of $\varepsilon$ if
$\langle \alpha \rangle=0$, i.e., by a central scheme. (This is consistent with
the observation in \cite{lafitte2012}.)  The second term is bounded independently of $\varepsilon$ because $\delta_1=0$.  We then end up with a CFL-type condition of the form $\Delta t \le C\Delta x^2$. Concrete results for specific schemes are given after the proof. Similarly, the number $K$ of inner steps can be bounded independently of $\varepsilon$ using the fact that $\log(\varepsilon) \to -\infty$ as $\varepsilon$ tends to $0$.

\begin{proof}[Proof of theorem~\ref{thm:stabcond}]
We know from theorem~\ref{thm:stab} that the stability regions of the projective forward Euler method are contained within those of the higher-order Runge--Kutta methods.  We therefore can safely choose the method parameters based on the stability conditions for the projective forward Euler method, which are given in equation~\eqref{eq:stab_pfe}.
The chosen method parameters $\delta t$, $\Delta t$ and $K$ need to be chosen to ensure that the eigenvalues in the region $\mathcal{D}_2$ (see corollary~\ref{cor:eig}) are contained in the region $\mathcal{D}_2^{PFE}$, and that the eigenvalue $\tau_{\delta t}$ is contained in the region $\mathcal{D}_1$.

First, we center the region $\mathcal{D}_2$ around the origin, resulting  in the requirement that
\begin{equation}\label{eq:Ddt}
	\delta t=\varepsilon^{\gamma+1}.
\end{equation}
Next, we need conditions on $\Delta t$ such that
$\tau_{\delta t}$ is contained within $\mathcal{D}_1^{PFE}$, i.e.
\[
1-2\dfrac{\varepsilon^{\gamma+1}}{\Delta t}\le\Re(\tau_{\delta t})\le 1, \qquad \left|\Im(\tau_{\delta t})\right|\le \dfrac{\varepsilon^{\gamma+1}}{\Delta t},
\]
where we have already used \eqref{eq:Ddt}. The second inequality on $\Re(\tau_{\delta t})$ is always satisfied. Using the expressions for the eigenvalues in corollary~\ref{cor:eig}, we obtain
\begin{equation}
\begin{cases}
1-2\dfrac{\delta t}{\Delta t}\leq 1+\eps\alpha+\eps^2\left(\Big\langle (\langle
\alpha\rangle -\alpha)^2-\beta^2 \Big\rangle +\delta_\gamma\Big\langle
\dfrac{\beta}{v}\Big\rangle^2\right)\\
\dfrac{\delta t}{\Delta t}\geq \Big|\eps \delta_\gamma
\Big\langle\dfrac{\beta}{v}\left\rangle+\eps^2\left[\delta_\gamma\Bigg\langle
\left(\Big\langle\dfrac{\beta}{v} \Big\rangle-\beta\right)(\langle\alpha
\rangle-\alpha) \Bigg\rangle +\delta_{\gamma-1}\Big\langle \dfrac{\beta}{v}
\Big\rangle\right]\right|
\end{cases}
\end{equation}
from which the condition in~\eqref{eq:Dt} is readily satisfied.


Finally, we have to choose $K$, the number of
small steps for the inner integrator, such that the eigenvalues in the region $\mathcal{D}_2$ are contained in the region $\mathcal{D}_2^{PFE}$.  From corollary \ref{cor:eig}, we
already know that, when $\delta t=\varepsilon^{\gamma+1}$, the radius $r$ of $\mathcal{D}_2$ is given as
\[
r = \dfrac{\varepsilon}{J}\max_{j\in\mathcal{J}}(|\alpha_j|+\beta_j|).
\]
Given that $\delta t$ and $\Delta t$ have already been fixed, the stability condition
\begin{eqnarray}
	r\leq \left( \dfrac{\delta t}{\Delta
	t}\right)^{1/K}=\left(\dfrac{\eps^{\gamma+1}}{\Delta t}\right)^{1/K}
\end{eqnarray}
results in a condition on $K$, which can be derived as $
K\geq \log\left( \eps^{\gamma+1}/\Delta t\right)/\log(r).
$

Using the conditions we have derived on $\delta t$ and $\Delta t$, we get:
\begin{equation}\label{eq:K-proof}
K\geq
\dfrac{\log\left(\varepsilon/b\right)}{\log\big(\varepsilon\,\max_{j}(|\alpha_j|+|\beta_j|)/J\big)},
\end{equation}
where $b$ is defined as in equation~\eqref{eq:b}. Remarking that~\eqref{eq:K-proof} is equivalent to~\eqref{eq:K} concludes the proof.
\end{proof}

We conclude with the application of the above stability
conditions for the specific combinations for the scaling and the spatial discretisation given in~\eqref{eq:derivx-par} and in~\eqref{eq:derivx-hyp}.


\begin{ex}[Hyperbolic scaling with third order upwind discretisation]
The hyperbolic case corresponds to $\gamma=0$, which implies $\delta t = \eps$.
Given the definitions~\eqref{eq:alpha-upwind} of $\alpha_j$ and $\beta_j$,
we obtain the following condition on $\Delta t$,
\begin{eqnarray*}
\Delta t&\leq& \min(\Delta t_1^{\textrm{max}},\Delta t_2^{\textrm{max}})\\
\Delta t^{\textrm{max}}_1&=&\left(\dfrac{2}{\left(\dfrac{
\langle |v|\rangle A(\zeta)}{6\Delta
x}\right)+\dfrac{\varepsilon}{36\Delta x^2}\left(1+\langle
v^2\rangle\right)B(\zeta)^2+\mathrm{Var}(|v|)^2A(\zeta)^2}\right)\\
\Delta t^{\textrm{max}}_2&=&\dfrac{1}{\dfrac{8\sin(\zeta)-\sin(2\zeta)}{6\Delta
x}+\varepsilon\left(\langle \alpha\beta\rangle- \langle\alpha\rangle\langle\beta\rangle \right)}
\end{eqnarray*}
It is clear that the order $\Oh(\varepsilon)$ term is positive in the denominator of $\Delta t^{\max}_1$, and $0$ in the denominator of $\Delta t^{\max}_2$, since $\langle \alpha\beta \rangle -\langle \alpha\rangle\langle\beta\rangle =0$. In the limit when $\varepsilon$ tends to $0$, we then obtain the following condition on $\Delta t$:
\begin{equation}
\Delta t\leq
\min\left(\dfrac{3\Delta x}{4\langle |v|\rangle},\dfrac{3\Delta x}{8}\right).
\end{equation}
We end up with a stability condition on the macroscopic time step
$\Delta t$ which is independent of $\varepsilon$, and that is of CFL-type for a hyperbolic partial differential equation.

To bound the number of inner steps $K$, we observe that, when $\varepsilon$ tends to $0$, the second term in~\eqref{eq:K} tends to $0$. Moreover, assuming $(1/J)\max_j(\left|\alpha_j\right|+\left|\beta_j\right|)\le 1$, the first term is bounded by $1$. With some algebraic manipulation, this leads to the condition that $K\ge 2$.
%
%
\end{ex}

\begin{ex}[Parabolic scaling with fourth order central discretisation]
The parabolic case corresponds to  $\gamma=1$, and therefore $\delta t=\eps^2$. For
the fourth order central discretisation, $\alpha_j$ and $\beta_j$ are given by~\eqref{eq:alpha-central}.

Substituting these expressions into~\eqref{eq:Dt} yields
\begin{eqnarray*}
\Delta t &\leq& \min (\mathrm{\Delta t^{\textrm{max}}_1,\Delta t^{\textrm{max}}_2})\\
\Delta t^{\textrm{max}}_1 &=& \dfrac{2\varepsilon(6\Delta x)^2}{\varepsilon
(8\sin(\zeta)-\sin(2\zeta))^2} \geq \dfrac{9\Delta x^2}{8\langle v^2\rangle}\\
\Delta t^{\textrm{max}}_2 &= &\dfrac{\varepsilon 6\Delta x}{\varepsilon
(8\sin(\zeta)-\sin(2\zeta)) }\geq \dfrac{3\Delta x}{4}
\end{eqnarray*}
Concerning the number $K$ of inner steps, a similar argument as above can be followed to show that the projective Runge--Kutta scheme will be stable provided that $K \ge 3$, see also \cite{lafitte2012}.
\end{ex}
\section{Consistency analysis} \label{sec:consist}

In this section we will prove that the PRK4 algorithm  is fourth order accurate
in space and time for a linear flux $A(u)=u$. \footnote{Remark that the following analysis can be done for
PRK schemes of any order.} First let us introduce some notations that will be
used throughout this section: for $k\in\{0,\hdots,K+1\}$,
\begin{itemize}
\item $t^{N,k}=N\Delta t+k\delta t$ is an intermediate time on the micro grid, as
 described in subsection \ref{sec:outer},
 \item $\widetilde{\partial^p f}^{N,k}$ denotes the evaluation of a $p$-th derivative of the exact solution of \eqref{eq:kin_eq_dimensionless} at time $t^{N,k}$,
 \item and $\tilde{u}^{N,k}=\langle \tilde{f}^{N,k}\rangle$ is the corresponding exact density,
 \item while $f^{N,k}$ is the numerical solution at time $t^{N,k}$
 resulting from the PRK4 scheme, starting from the exact solution
 $\tilde{f}^{N,K}$
 \item Similarly $u^{N,k}=\langle f^{N,k}\rangle$ is the corresponding numerical density function.
\end{itemize}
Therefore we will compute the truncation error $E^{N+1}$ at time
$t^{N+1}$
which is defined as:
\begin{equation}
E^{N+1}=\dfrac{\tilde{u}^{N+1}-u^{N+1}}{\Delta t},\label{eq_trunc}
\end{equation}
The expression for the truncation error the PRK4 scheme is:
\begin{equation}
E^{N+1} = \dfrac{\ut^{N+1}-u^{N,K+1}}{\Delta t}-\dfrac{\Delta t-(K+1)\delta
t}{\Delta t}\sum_{s=1}^Sb_s\dfrac{u^{N+c_s,K+1}-u^{N+c_s,K}}{\delta t}
\label{eq:truncv2},
\end{equation}
with, $\forall s\in\{1,\hdots,S\}$,
\begin{equation}
u^{N+c_s} = u^{N,K+1}+ (c_s\Delta t-(K+1)\delta
t)\sum_{l=1}^{s-1}\dfrac{ a_{sl}}{c_s}k_l,
\end{equation}
where, $\forall l\in\{1,\hdots,s-1\}$,
\begin{equation}
 k_l =\dfrac{u^{N+c_l,K+1}-u^{N+c_l,K}}{\delta t}.
\end{equation}
Furthermore,
the convergence error for the inner integrator reads:
\begin{equation}
e^{N,k}_f:=\dfrac{\tilde{f}^{N,k}-f^{N,k}}{\delta t}.\label{eq:trunc_inner}
\end{equation}
Recall that, since $\delta t=\eps^{\gamma+1}$,
\begin{equation}
f^{N,k+1}=\mathcal{S}_{\delta t}f^{N,k}=-\eps \deriv_{x,v}(f^{N,k})+
\Maxw u^{N,k}.\label{eq:step}
\end{equation}
\begin{rem}
 To stress the fact that $\Maxw$ is a linear operator, we omit the parenthesis
 of the argument in this section.
\end{rem}
Now we want to analyse the evolution of the truncation error of the inner integrator:
\begin{lem}
Suppose, we use an inner integrator which is accurate up to $p$-th order in
space and first order in time. Then, we also have
that $e^{N,K} =\Oh(\delta t)+\Oh(\varepsilon^{1-\gamma}\Delta x^p)$.\label{lem:trunc-inner}
\end{lem}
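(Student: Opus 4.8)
The plan is to track how the inner-integrator convergence error $e^{N,k}_f$ defined in~\eqref{eq:trunc_inner} propagates from $k=0$ (where it vanishes, since the numerical solution is initialized with the exact value $\tilde f^{N,K}$ — wait, more precisely the inner iterations for stage $s=1$ start from $f^{N,0}=\tilde f^{N}$, so $e^{N,0}_f=0$) up to $k=K$. First I would write the exact solution's one-step increment by Taylor-expanding in time: since $\tilde f$ solves~\eqref{eq:kin_eq_dimensionless}, we have $\tilde f^{N,k+1}=\tilde f^{N,k}+\delta t\,\partial_t\tilde f^{N,k}+\Oh(\delta t^2\,\partial_{tt}\tilde f)$, and $\partial_t\tilde f = -\tfrac{v}{\eps^\gamma}\partial_x\tilde f + \tfrac1{\eps^{\gamma+1}}(\Maxw\tilde u-\tilde f)$. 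Subtracting the numerical step~\eqref{eq:step} (in the form $f^{N,k+1}=\mathcal S_{\delta t}f^{N,k}$, which is forward Euler on~\eqref{eq:semidiscrete} with the $p$-th order spatial discretisation $\deriv_{x,v}$) and dividing by $\delta t$ gives a recursion of the form
\begin{equation}
e^{N,k+1}_f = \big(\Id+\delta t\,B\big)\,e^{N,k}_f + \delta t\,\rho^{N,k},
\end{equation}
where $B$ is the Fourier-symbol matrix from~\eqref{eq:fourier_semidiscr} (or rather its spatial-operator analogue) and $\rho^{N,k}$ collects the local truncation errors: the time-discretisation error, of size $\Oh(\delta t\,\partial_{tt}\tilde f)$, and the spatial-discretisation error, of size $\Oh(\eps^{-\gamma}\Delta x^p\,\partial_x^{p+1}\tilde f)$ coming from replacing $v\partial_x$ by $\deriv_{x,v}$ with the prefactor $1/\eps^\gamma$.

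Next I would unroll this recursion: $e^{N,K}_f = \delta t\sum_{k=0}^{K-1}(\Id+\delta t\,B)^{K-1-k}\rho^{N,k}$, using $e^{N,0}_f=0$. The key point is that $\Id+\delta t\,B = \mathcal S_{\delta t}$, whose spectrum we have fully characterized in Corollary~\ref{cor:eig}: with $\delta t=\eps^{\gamma+1}$ the eigenvalues lie in $\mathcal D_1\cup\mathcal D_2$, the cluster $\mathcal D_2$ is a small disk centered at the origin (radius $\Oh(\eps^{1-\gamma}\Delta x^{-1})$, which is $\le 1$ under the mesh restriction in the Remark following Corollary~\ref{cor:eig}), and the single eigenvalue $\tau_{\delta t}\in\mathcal D_1$ satisfies $|\tau_{\delta t}|\le 1$ up to $\Oh(\eps^{1-\gamma})$ corrections. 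Hence $\|\mathcal S_{\delta t}^m\|$ is bounded uniformly in $m$ and in $\eps$ (on the relevant subspaces), so $\|e^{N,K}_f\|\lesssim \delta t\cdot K\cdot\max_k\|\rho^{N,k}\|$. Since $K$ is $\Oh(1)$ in $\eps$ (Theorem~\ref{thm:stabcond}), this yields $e^{N,K}_f = \Oh(\delta t\cdot\delta t\,\partial_{tt}\tilde f) + \Oh(\delta t\cdot\eps^{-\gamma}\Delta x^p)$. The first term is $\Oh(\delta t^2)=\Oh(\eps^{2\gamma+2})$, which is certainly $\Oh(\delta t)$; the second, using $\delta t = \eps^{\gamma+1}$, is $\Oh(\eps^{\gamma+1}\eps^{-\gamma}\Delta x^p) = \Oh(\eps\,\Delta x^p)$. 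Here I need to be slightly careful: the stated bound is $\Oh(\eps^{1-\gamma}\Delta x^p)$, and for $\gamma=1$ (parabolic) the naive count gives $\Oh(\eps\Delta x^p)$ rather than $\Oh(\Delta x^p)$; resolving this requires exploiting that for the central scheme $\langle\alpha\rangle=0$ (so the spatial error on the \emph{mean} is one order higher in $\eps$, or equivalently the projector structure $MP$ kills the leading spatial-error contribution to $u=\langle f\rangle$), which is exactly the parabolic/central-scheme phenomenon already flagged in the discussion after Theorem~\ref{thm:stabcond}.

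The main obstacle, and the step I would spend the most care on, is precisely this bookkeeping of $\eps$-powers in the spatial error combined with the non-normality of $\mathcal S_{\delta t}$: the amplification matrix has a large spectral gap but is \emph{not} symmetric, so a crude norm bound on $(\Id+\delta t B)^m$ is not automatically $\Oh(1)$ uniformly in $\eps$ — one must either diagonalize using the eigenstructure from Theorem~\ref{thm_spec}/Corollary~\ref{cor:eig} and bound the condition number of the eigenvector matrix, or project onto the slow/fast subspaces separately and observe that on the fast subspace the contraction factor (bounded by the radius of $\mathcal D_2$, which is $<1$) damps the stiff part of $\rho$ while on the one-dimensional slow subspace the factor is $1+\Oh(\eps^{1-\gamma})$ over $K=\Oh(1)$ steps. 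I would carry out the latter (subspace-splitting) argument, since it mirrors the stability proof and makes transparent why the stiff component of the local error does not accumulate. Everything else — the Taylor expansions, collecting $\rho^{N,k}$, summing the geometric-type series — is routine once this uniform-boundedness of the propagator is in hand.
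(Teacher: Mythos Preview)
Your recursion has a spurious $\delta t$. With $e^{N,k}_f=(\tilde f^{N,k}-f^{N,k})/\delta t$ as in~\eqref{eq:trunc_inner}, the one-step relation is
\[
e^{N,k+1}_f=\mathcal S_{\delta t}\,e^{N,k}_f+\rho^{N,k},\qquad
\rho^{N,k}=\dfrac{1}{\eps^\gamma}\bigl(\deriv_{x,v}\tilde f^{N,k}-v\,\widetilde{\partial_x f}^{N,k}\bigr)+\Oh(\delta t),
\]
not $e^{N,k+1}_f=\mathcal S_{\delta t}e^{N,k}_f+\delta t\,\rho^{N,k}$. (Check: $\tilde f^{N,k+1}-f^{N,k+1}=\mathcal S_{\delta t}(\tilde f^{N,k}-f^{N,k})+\delta t\cdot\rho^{N,k}$; dividing by $\delta t$ removes the $\delta t$ on the left but leaves $\rho^{N,k}$ bare on the right.) Your subsequent count $\delta t\cdot K\cdot\Oh(\eps^{-\gamma}\Delta x^p)=\Oh(\eps\Delta x^p)$ therefore uses a factor of $\delta t=\eps^{\gamma+1}$ that is not there. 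The honest naive bound is
\[
e^{N,K}_f=\sum_{k=0}^{K-1}\mathcal S_{\delta t}^{K-1-k}\rho^{N,k}=\Oh\!\bigl(K\cdot\eps^{-\gamma}\Delta x^p\bigr)+\Oh(\delta t)=\Oh(\eps^{-\gamma}\Delta x^p)+\Oh(\delta t),
\]
which is exactly one power of $\eps$ short of the claimed $\Oh(\eps^{1-\gamma}\Delta x^p)$ in \emph{both} scalings, not only for $\gamma=1$.

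The missing factor of $\eps$ is not recovered by a sharper spectral/norm bound on $\mathcal S_{\delta t}^m$; it comes from the structure of the spatial error itself, and this is the heart of the paper's argument. Writing $\Delta f:=\deriv_{x,v}\tilde f-v\partial_x\tilde f$ and using the Chapman--Enskog form $\tilde f=\Maxw\tilde u+\eps\tilde g$ (equation~\eqref{eq:ce}), one has $\langle\Delta f\rangle=\eps\langle\Delta g\rangle=\Oh(\eps\,\Delta x^p)$, because $\deriv_{x,v}$ and $v\partial_x$ are both odd in $v$ and the leading Maxwellian part is (up to the $\eps^\gamma$ correction) even in $v$. Combined with the explicit formula $\mathcal S_{\delta t}=-\eps\,\deriv_{x,v}+\Maxw\langle\cdot\rangle$ valid for $\delta t=\eps^{\gamma+1}$ (equation~\eqref{eq:step}), this gives $\mathcal S_{\delta t}^k(\Delta f)=\Maxw\langle\Delta f\rangle+\Oh(\eps)=\Oh(\eps\,\Delta x^p)$ directly, without any spectral decomposition. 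You do gesture at this mechanism in your last paragraph (``the projector structure $MP$ kills the leading spatial-error contribution''), but you treat it as a parabolic refinement rather than as the essential step; it is needed for all $\gamma$, and it replaces---rather than supplements---your proposed subspace-splitting bound on the propagator.
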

\begin{proof}
First, we analyse how the truncation error, defined in \eqref{eq:trunc_inner} evolves
after one extra step with the inner integrator.
Furthermore, we can expand the exact solution $\tilde{f}^{N,k+1}$ at time
$t^{N,k+1}$ around $t^{N,k}$ by using Taylor series:
\begin{align}
\tilde{f}^{N,k+1}&=\tilde{f}^{N,k}+\delta
t\,\widetilde{\partial_tf}^{N,k}+\Oh(\delta t^2)\\
&=\tilde{f}^{N,k}+\dfrac{\delta t}{\eps^\gamma}\left(-v\widetilde{\partial_x
   f}^{N,k}+\dfrac{-\tilde{f}^{N,k}+\Maxw{\langle\tilde{f}^{N,k}\rangle}}{\eps}
\right)+\Oh(\delta t^2)\\
&= \step \tilde{f}^{N,k} +\dfrac{1}{\eps^\gamma}(\deriv_{x,v}\tilde{f}^{N,k}-v\widetilde{\partial_xf}^{N,k})+\Oh(\delta t^2)\label{eq:ftilde}
\end{align}
Using \eqref{eq:step} and \eqref{eq:ftilde}, we get
\begin{equation}
 e^{N,k+1}= \step
 e^{N,k}+\dfrac{1}{\varepsilon^\gamma}\left(\deriv_{x,v}(\tilde{f}^{N,k}) -
\widetilde{v\partial_x f^{N,k}}\right)+\Oh(\delta t).
\end{equation}
Recall that
we suppose that the inner integrator is stable, and the assumption that the result at time $t^N$ is exact. This implies that we can write $e^{N,K+1}$ as:
\begin{equation}
	e^{N,K+1} =\dfrac{1}{\varepsilon^\gamma}\sum_{k=0}^{K-k}\step^k
	(\deriv_{x,v} \tilde{f}^{N,k}- v\widetilde{f}^{N,k}) +\Oh((K+1)\delta
	t).\label{eq:trunc_eNK1}
\end{equation}

To consider the above expression in more detail, we define: ${\Delta: f\mapsto
\deriv_{x,v}(\tilde{f})-\widetilde{v\partial_x f}}$ and recall that
$\deriv_{x,v}$ and $\Maxw$ are linear operators. Since
\begin{eqnarray}
\step (\Delta f)=\Maxw\langle \Delta f\rangle
-\varepsilon \deriv_{x,v}(\Delta f),
\end{eqnarray}
a simple recursion leads to, for all $k\geq 2$,
\begin{eqnarray*}
\step^k(\Delta f) = \Maxw\langle \Delta f\rangle
-\varepsilon \{\deriv_{x,v}\Maxw\langle \Delta
f\rangle+\Maxw\langle\Delta f\rangle\}+\Oh(\varepsilon^2).
\end{eqnarray*}
Now taking the mean value over velocity space yields:
\begin{equation}
\langle\Delta f \rangle =\langle \deriv_{x,v}(\tilde{u}+\varepsilon \tilde{g})-
\widetilde{v\partial_x(u+\varepsilon g)}\rangle=\varepsilon\langle \Delta
g\rangle=\varepsilon \Oh(\Delta x^p).
\end{equation}
The proof of the statement then follows by a simple substitution of the above
estimate into equation \eqref{eq:trunc_eNK1}.
\end{proof}
Now we can finally calculate the truncation error.
\begin{thm}[Truncation Error of PRK scheme]
 Consider a PRK scheme, that satisfies the assumptions \ref{ass:RK} and
 \eqref{eq:rk_convex} on the coefficients $a_{s,l}$. Then the truncation
 error $E^{N+1}$ of the scheme can be described by:
\begin{eqnarray*}
	E^{N+1}&=& \Bigg(\Delta t^2\mu^3\left(\dfrac{1}{6}-b^TA^2e\right)+\Delta
	t^3\mu^4\left(\dfrac{1}{24}-b^TA^3e\right)\Bigg)f^N+\Oh(\Delta t^4)\\
	&+& \Oh(\delta t)+\Oh(\varepsilon^{1-\gamma}\Delta x^p)+\delta
	t\Oh\left(\dfrac{\delta t+\varepsilon^{1-\gamma}\Delta x^p}{\Delta t}\right),
\end{eqnarray*}\label{thm:trunc}
where $\mu$ indicates the eigenvalue of the inner
integrator.
\end{thm}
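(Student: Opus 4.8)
The plan is to compute the truncation error $E^{N+1}$ in \eqref{eq:truncv2} by replacing, step by step, each numerical quantity appearing in the projective Runge--Kutta update with the corresponding exact quantity plus a controlled error, and then to recognize the leading deterministic terms as the classical Runge--Kutta order conditions applied to the ``slow'' dynamics governed by the dominant eigenvalue $\mu$ of the inner integrator. Concretely, I would first use Lemma~\ref{lem:trunc-inner} (and its obvious generalization to the stages $s\ge 2$, which start from $f^{N+c_s,0}=f^{N,K+1}$ rather than from $f^N$) to write, for each stage $s$,
\begin{equation*}
\dfrac{u^{N+c_s,K+1}-u^{N+c_s,K}}{\delta t} = k_s^{\mathrm{exact}} + \Oh(\delta t) + \Oh(\varepsilon^{1-\gamma}\Delta x^p),
\end{equation*}
where $k_s^{\mathrm{exact}}$ is the finite-difference derivative estimate built from the exact solution. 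The key structural observation is that, after the fast transients have been damped (which costs $K+1$ inner steps), the density $u^{N,k}$ evolves according to the macroscopic scheme whose amplification factor on each Fourier mode is $\mu=\mu(\zeta)$; hence each $k_s$ behaves, up to the stated error terms, like $\mu$ times the appropriate intermediate value, exactly as a stage derivative in a classical explicit RK method applied to $\dot y=\mu y$ would.

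Second, I would substitute these stage expansions into \eqref{eq:truncv2}, together with the expansion of $\tilde u^{N+1}$ around $t^{N,K+1}$ via Taylor series. Since $t^{N+1}-t^{N,K+1}=\Delta t-(K+1)\delta t$ and $\delta t=\varepsilon^{\gamma+1}$, the difference between ``extrapolating from $t^{N,K+1}$'' and ``extrapolating from $t^N$'' contributes only $\Oh(\delta t)$ and $\delta t\,\Oh((\delta t+\varepsilon^{1-\gamma}\Delta x^p)/\Delta t)$ type corrections — this is precisely the ``additional consistency error'' flagged as negligible in Section~\ref{sec:outer}. What remains after these substitutions is the truncation error of the \emph{classical} $S$-stage RK method with matrix $A$, weights $b$, nodes $c$ applied to a linear problem with rate $\mu$. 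Expanding in powers of $\Delta t$ and using Assumptions~\ref{ass:RK} (which kill the $\Delta t^0$ and $\Delta t^1$ terms), the $\Delta t^2$ coefficient is $\mu^3(1/6 - b^TAc)$ and, since $c=Ae$ by \eqref{eq:cond3}, this is $\mu^3(1/6-b^TA^2e)$; likewise the $\Delta t^3$ coefficient is $\mu^4(1/24-b^TA^3e)$, with all higher contributions absorbed into $\Oh(\Delta t^4)$. Collecting the deterministic RK terms and the accumulated error terms from the inner integrator and from the extrapolation-origin shift gives exactly the claimed expression.

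The main obstacle I anticipate is bookkeeping the propagation of the inner-integrator error through the Runge--Kutta stage coupling: each stage $s\ge 2$ is initialized from $f^{N,K+1}$ and then advanced by a combination of previously computed $k_l$'s via \eqref{eq:rk2}, so the errors $\Oh(\delta t)+\Oh(\varepsilon^{1-\gamma}\Delta x^p)$ in the $k_l$'s feed into the initial data of later stages, get multiplied by factors like $c_s\Delta t-(K+1)\delta t$, and then re-enter through $\deriv_t$, which carries a $1/\varepsilon^{\gamma+1}$ prefactor. One must check that the stabilizing effect of the $K+1$ inner steps (each application of $\step$ contracts the fast modes and, on the slow mode, multiplies by $\mu=1+\Oh(\varepsilon)$) exactly compensates this prefactor, so that no negative power of $\varepsilon$ survives and the error indeed collapses to the stated $\delta t\,\Oh\!\left((\delta t+\varepsilon^{1-\gamma}\Delta x^p)/\Delta t\right)$ form. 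This is where Lemma~\ref{lem:trunc-inner}, the linearity of $\deriv_{x,v}$ and $\Maxw$, and the convexity condition \eqref{eq:rk_convex} (ensuring the stage combinations are genuine convex averages, so no error amplification occurs) all have to be used together; everything else is routine Taylor expansion and matching of RK order conditions.
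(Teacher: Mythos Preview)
Your proposal is correct and follows essentially the same route as the paper: use Lemma~\ref{lem:trunc-inner} to replace each numerical stage derivative $k_s$ by the exact time derivative plus $\Oh(\delta t)+\Oh(\varepsilon^{1-\gamma}\Delta x^p)$, Taylor-expand the resulting expressions so that the $\delta t$-dependent contributions separate from a classical $S$-stage Runge--Kutta update, and then invoke the standard RK order conditions (as in \cite{Hairer1993}) on the linear slow dynamics governed by $\mu$ to identify the $\Delta t^2\mu^3(1/6-b^TA^2e)$ and $\Delta t^3\mu^4(1/24-b^TA^3e)$ terms. Your discussion of how the inner-step contraction compensates the $1/\varepsilon^{\gamma+1}$ prefactor in $\deriv_t$ when errors propagate through the stage coupling is in fact more explicit than the paper's own treatment, which simply splits into a $\delta t$-independent and a $\delta t$-dependent part and appeals to the classical RK theory without detailing this cancellation.
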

\begin{proof}

	First we will derive a relation between the derivatives
	$k_s=\partial_t(f^{N+c_s})$ from the original Runge-Kutta scheme
	and the modified derivatives for our PRK scheme. So let us perform a Taylor expansion from $k_s$:
	\begin{align}
		k_s & = \dfrac{f^{N+c_s}+(K+1)\delta
		t\deriv_t(f^{N+c_s})+(K+1)^2\delta t^2\deriv_{tt}(f^{N+c_s})/2 -
		f^{N+c_s }-K\delta t \deriv_t (f^{N+c_s})-(K^2\delta
		t^2/2)\deriv_{tt}(f^{N+c_s})}{\delta t}\\
& = \deriv_t(f^{N+c_s}) +\dfrac{2k+1}{2}\delta t\deriv_{tt}(f^{N+c_s})
	+\Oh(\delta t^2).
\end{align}

Now we showed in lemma \ref{lem:trunc-inner}  that an application of the numerical derivative introduces an error
of order $\Oh(\varepsilon^{1-\gamma}\Delta x^p)$ with respect to the exact derivative
$\partial_t$ and hence we can write:
\begin{equation}
	k_s = \partial_t (f^{N+c_s}) + \dfrac{2K+1}{2}\delta t \partial_{tt}(f^N+c_s)
	+\Oh(\varepsilon^{1-\gamma}\Delta x^p).
\end{equation}
We proceed by substituting the expression for $f^{N+c_s}$ into the above
equation. This yields:
\begin{equation}
	k_s = \partial_t\left(f^{N,K+1} + (c_s\Delta t
	-(K+1)\delta t)\sum_{l=1}^{s-1}\dfrac{a_{sl}}{c_s}k_l\right) +\Oh(\delta
	t+\varepsilon^{1-\gamma}\Delta x^p).
\end{equation}
Of course, the latter can be further expanded as follows:
\begin{equation}
	k_s = \partial_t\left(f^N +(K+1)\delta t\deriv_t(f^N) + (c_s\Delta t
	-(K+1)\delta t) \sum_{l=1}\dfrac{a_{sl}}{c_s}k_l\right)+ \Oh(\delta
	t+\varepsilon^{1-\gamma}\Delta x^p),
\end{equation}
which is in turn equivalent to:
\begin{equation}
	k_s = \partial_t (f^N) + (K+1)\delta
	t\Big(\partial_{tt}(f^N)-\sum_{l=1}\dfrac{a_{sl}}{c_s}k_l\Big)+\Delta
	t\sum_{l=1}^{s-1}a_{sl}k_l+ (1+\delta t)\Oh(\delta
	t+\varepsilon ^{1-\gamma}\Delta x^p).
\end{equation}
Next, a combination of this result with the equation for $f^{N+1}$ gives rise to:
\begin{eqnarray*}
f^{N+1} &=& f^{N}+ (K+1)\delta t \deriv_t{f^N} + \dfrac{(K+1)^2}{2}\delta
t^2\deriv_{tt}(f^N) \\
&+& (\Delta t -(K+1)\delta t)\sum_{s=1}b_s\Big(\partial_t
(f^N) + (K+1)\delta t\Big(\partial_{tt}(f^N)-\sum_{l=1}\dfrac{a_{sl}}{c_s}k_l\Big)+\Delta
	t\sum_{l=1}^{s-1}a_{sl}k_l +(1+\delta t)\Oh(\delta t +\varepsilon^{1-\gamma}\Delta
	x^p)\Big).
\end{eqnarray*}
Then, we will proceed by splitting the above expression in an $\delta t$
-independent part and a part which depends on the time step of the inner
integrator. Now, we can apply the theorem about the order conditions of general
RK schemes (see \cite{Hairer1993}) to derive finally the expression for the
truncation error of the PRK scheme.
\end{proof}
\begin{ex}[Truncation error for PRK4]
As a direct consequence of theorem \ref{thm:trunc}, we find that the order of
accuracy of the PRK4 scheme is:
\begin{equation}
	E^{N+1}=\Oh(\Delta t^4)+\Oh(\delta t)+\Oh(\varepsilon^{1-\gamma}\Delta
	x^p)+\delta t\left(\dfrac{\delta t+\varepsilon^{1-\gamma}\Delta x^p}{\Delta t}\right),
\end{equation}
where we have used that $a_{21}=a_{32}=1/2$ and $a_{43}=1$. The other Runge
Kutta coefficients are zero. Moreover the scheme is consistent. This is the
scheme that we used throughout the numerical experiments.
\end{ex}

\section{Numerical experiments}\label{sec:exp}
In this section, we  illustrate the performance of the high-order
projective integration algorithm. In section~\ref{sec:num-lin}, we  first illustrate its consistency
properties and long term performance on a simple
linear kinetic equation. Afterwards, we will apply the scheme on some more
realistic applications: the Burgers' equation (section~\ref{sec:num-burgers}) and the semiconductor equation (section~\ref{sec:num-semi}).

In sections~\ref{sec:num-lin} and~\ref{sec:num-burgers}, we consider the velocity space $v_j,j\in
\{1\cdots J\}$  to be constructed using the zeroes of the Legendre polynomial of degree $J$; in section~\ref{sec:num-semi}, we use the zeroes of the Hermite polynomial of degree $J=20$.  All simulations are performed on the spatial domain $[-1,1]$. We choose an equidistant, constant in time mesh with cell centers $\Pi :=
\left\{x_0=-1+\Delta x/2,\ldots, 1-\Delta x/2\right\}$, and the fourth-order central spatial discretisation defined by~\eqref{eq:derivx-par}.


\subsection{Linear kinetic equation}\label{sec:num-lin}
We consider equation~\eqref{kin_eq} with $A(u) = u$ and periodic boundary conditions.
As an initial condition, we
take
\begin{equation}
	f(v,x,t) = \dfrac{e^{-v^2\sin(\pi x)/T}}{\sum_j w_j}, \qquad j=1, \ldots, J.
\end{equation}
To examine the truncation error (defined in
equation~\eqref{eq_trunc}), we perform a numerical simulation using a second and fourth order PRK algorithm, with Butcher tableaux in figure~\ref{fig:butcher}(right), using $\delta t=\varepsilon^2$, $K=3$, and $\Delta t=\num{1e-3}$, and $\Delta x=1\times 10^{-2}$; the number of outer PRK steps is defined by $(N+1)\Delta t=1$.  We perform the experiment for $\eps=1\times 10^{-2}$ and $\eps=1\cdot 10^{-3}$.
As the reference solution, we use a direct forward Euler simulation with  $\delta t=\varepsilon^3$.
The results are shown in figure~\ref{fig:truncerr}. One observes that the
truncation error behaves as $\Oh(\Delta t^4)$, resp., $\Oh(\Delta t^2)$, for large $\Delta t$ until, for sufficiently small $\Delta t$, a plateau is reached, at which the contribution of the inner integrator to the truncation error, which is $\Oh(\varepsilon^2)$, becomes dominant.
\begin{figure}[!h]
\centering
\subfigure{
 \includegraphics[width=0.40\textwidth]{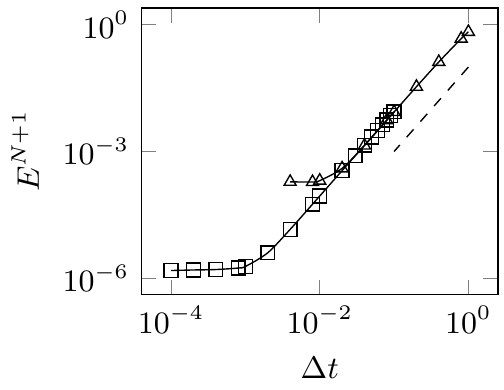}
}
\subfigure{
 \includegraphics[width=0.40\textwidth]{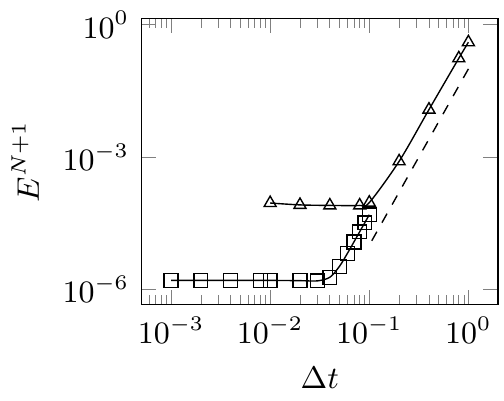}
}
	\caption{Truncation error of $u$ with PRK4 as a function of $\Delta t$ for
	$K=3, \Delta x=\num{1e-2}$ using $\varepsilon=\num{1e-3}$ (squares) and
	$\varepsilon=\num{1e-2}$ (triangles).}\label{fig:truncerr}
\end{figure}

Next, we compare the long-time simulation results of the PRK scheme with both a full microscopic
simulation and a simulation of the limiting macroscopic
equation~\eqref{eq:diff_limitpar}. We consider $\eps=1\times 10^{-3}$, and choose a
fourth-order PRK scheme with $K=3$, $\delta t=\eps^2$, $\Delta x=1\times
10^{-1}$ and $\Delta t=1\times 10^{-3}$.  The full microscopic simulation is
performed using the same inner integrator with time step $\delta t =\eps^3$, whereas the limiting macroscopic equation is simulated using the same spatial discretization and the corresponding direct Runge--Kutta method of order $4$.
The results are shown in figure~\ref{fig:longterm_lin}. We observe that the PRK4 algorithm is visually as
accurate as the full microscopic simulation, while requiring a computational
effort that is only $1/1000$  of the full microscopic simulation. Moreover, the
projective scheme appears to be able to capture the kinetic behaviour that is lost in the macroscopic limiting equation.
\begin{figure}[h]
\centering
\subfigure{
\includegraphics[width=0.40\textwidth, height=0.35\textwidth]{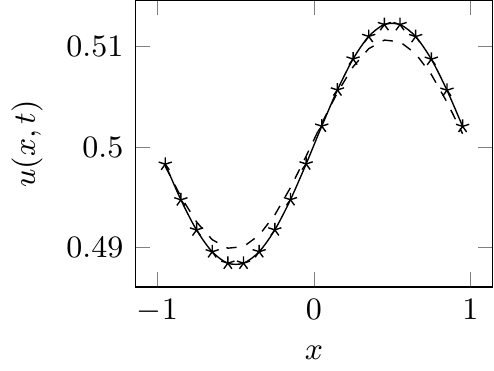}
}
\subfigure{
 \includegraphics[width=0.40\textwidth,
 height=0.35\textwidth]{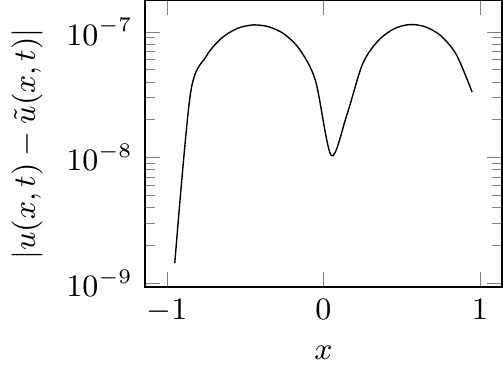}
%
}
\caption{Long term results for the linear kinetic equations. Left: density at
time $1$, ${\Delta t=\num{1e-3}}$, $K=3$, $\Delta x=\num{1e-1}$ with PRK4 (stars);
microscopic evolution with $\delta t=\varepsilon^3$ (solid line); results obtaind using the limiting equation (dashed). Right: absolute error of PRK results with respect to full microscopic simulation. }
\label{fig:longterm_lin}
\end{figure}

\subsection{Viscous Burgers' equation}\label{sec:num-burgers}

Let us now consider the viscous Burgers' equation, i.e., equation~\eqref{kin_eq} with $A(u) = u^2$, using Neumann boundary conditions
\[\partial_x f(-1,v,t)=
\partial_xf(1,v,t)=0),\]
and the initial condition:
\begin{equation}\label{eq:ic}
		f(x,v_j,t)=\dfrac{1}{\sum_j w_j}\exp(-v_j^2/T)\exp(-x^2/0.1).
\end{equation}
We again perform a fourth order PRK simulation using
$K=3$, $\delta t=\eps^2$, $\Delta x=1\times 10^{-1}$ and $\Delta t=1\times 10^{-3}$.  As a reference solution, we perform a full microscopic simulation using the same inner integrator with time step $\delta t =\eps^3$.
The results are shown in figure~\ref{fig:burgers}(left) at various instances in time. On the right, the error with respect to the reference soluton is shown. We clearly observe that the projective integration method is also very accurate in this case.
 \begin{figure}
 \centering
 \subfigure{
    \includegraphics[width=0.40\textwidth,height=0.35\textwidth]{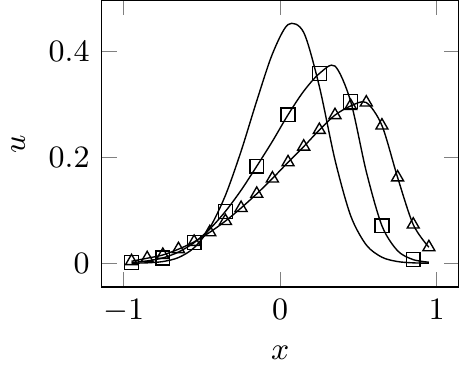}
 }
\subfigure{
 \includegraphics[width=0.40\textwidth,height=0.35\textwidth]{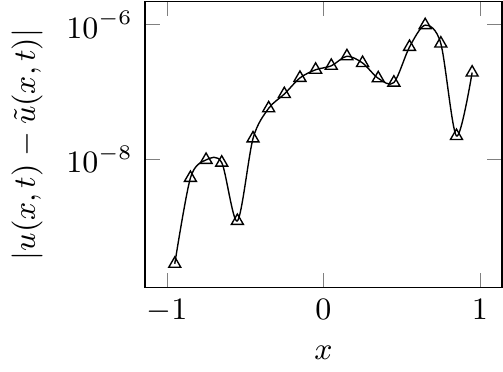}
%
 }
 	\caption{Left: Evolution of the Viscous Burgers equation as a result of the
 	PRK4 algorithm for $N=100$  (solid line, no marks), $N=500$ (squares) and
 	$N=1000$ (triangles). Right: Error at  $N=1000$. Parameters:  $\varepsilon=\num{1e-3}, \Delta t
 	= \num{1e-3}, K=3, \Delta x=\num{1e-1}$. }
 	\label{fig:burgers}
 \end{figure}

\subsection{Semiconductor equation}\label{sec:num-semi}

Finally, we illustrate the PRK method for the semiconductor equation~\eqref{eq:semi} with $\eps=1\times 10^{-3}$.

To discretise the partial derivative $\partial_v$, we also use a
second order finite difference scheme, taking into account that the chosen velocities, because they
are the zeroes of the Hermite polynomials, are not equidistant.  As an initial condition, we again choose~\eqref{eq:ic}
and we apply no-flux boundary conditions for both the velocity
and spatial variables. For the potential $\Phi$, we applied Dirichlet boundary
conditions, $\Phi(-1,t)=2.0$ and $\Phi(1,t)=0$, causing an advective movement to
the left.

\begin{figure}[h]
	\centering
		\subfigure{
		\includegraphics[width=0.40\textwidth,height=0.35\textwidth]{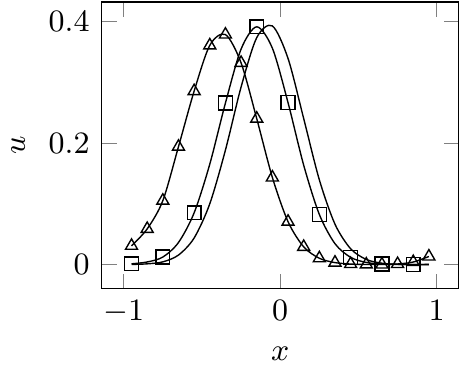}
		}
		\subfigure{
		\includegraphics[width=0.40\textwidth,height=0.35\textwidth]{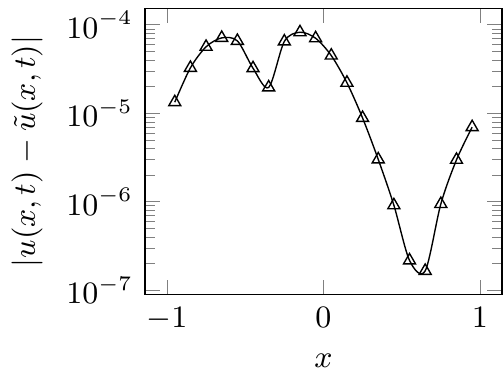}
		\tikzsetnextfilename{err_semiconductor}
			}
		\caption{Evolution of the density calculated with the projective integration
		algorithm after $N=100$ (no marks), $N=200$ (squares), $N=500$ (triangles).
		Parameters of the simulation: $\varepsilon=\num{1e-3}$,$\Delta t=\num{1e-3}$ ,
		$\Delta x=\num{1e-1}$ , $\Phi(-1,t)=-2.0$,$\Phi(1,t)=0.0$,   $T=\num{1e-2}$.}\label{fig:semi}
\end{figure}
As before, we perform a fourth order PRK simulation using
$K=3$, $\delta t=\eps^2$, $\Delta x=1\times 10^{-1}$ and $\Delta t=1\times 10^{-3}$, as well as a microscopic reference solution using the same inner integrator with time step $\delta t =\eps^3$.
The results are shown in figure~\ref{fig:semi}(left) at various instances in time. On the right, the error with respect to the reference soluton is shown. We clearly observe that the projective integration method is also very accurate in this case.
\section{Conclusions}\label{sec:concl}
We investigated a high-order, fully explicit, asymptotic-preser-ving scheme for a kinetic equation with linear relaxation, both in the hydrodynamic and diffusive scalings in which a hyperbolic, resp.~parabolic, limiting equation exists. The scheme first takes a few small (inner) steps with a simple, explicit method (such a direct forward Euler) to damp out stiff components of the solution and estimate the time derivative of the slow components. These estimated time derivatives are then used in an (outer) Runge--Kutta method of arbitrary order.  We showed that, with an appropriate choice of inner step size, the time-step restriction on the outer time step is similar to the stability condition for the limiting macroscopic equations. Moreover, the number of inner time steps is also independent of the scaling parameter.  We analyzed stability and consistency, and illustrated with numerical results.

We conclude by pointing out the current limitations of the method, and some suggestions for future work. The asymptotic-preserving nature of the scheme is due to the presence of a single relaxation time in the linear relaxation collision operator, and relies on an appropriate choice of the inner time step, which has to satisfy $\delta t=\varepsilon^{\gamma+1}$.   When multiple relaxation times are present, one should expect the number of time steps to be chosen as  $K\sim \log(1/\varepsilon)$ \cite{gear:1091}. In such situations, it might be of interest to study schemes in which a sequence of inner steps is taken, each commensurate with one of the relaxation time scales, as is proposed in \cite{logg}.  A second direction of further investigation would be to look at problems in which hydrodynamic and diffusive regimes are present simultaneously in different parts of the spatial domain.  Many efforts have been done for (semi-)implicit asymptotic-preserving schemes (some of them cited in the introduction); a number of these techniques (such as an a priori modeling of boundary layers \cite{golse,vignal}) can be readily applied in conjunction with the projective integration method proposed here.

\bibliographystyle{abbrv}
\bibliography{references.bib}

\begin{thebibliography}{10}

\bibitem{adam}
M.~Adams.
\newblock {Discontinuous Finite Element Transport Solutions in Thick Diffusive
  Problems}.
\newblock {\em Journal Name: Nuclear Science and Engineering}, 2001.

\bibitem{arna}
D.~Aregba-Driollet and R.~Natalini.
\newblock {Discrete Kinetic Schemes for Multidimensional Systems of
  Conservation Laws}.
\newblock {\em SIAM J. Numer. Anal.}, 37(6):1973--2004, 2000.

\bibitem{Ascher1997}
U.~M. Ascher, S.~J. Ruuth, and R.~J. Spiteri.
\newblock {Implicit-explicit Runge-Kutta methods for time-dependent partial
  differential equations}.
\newblock {\em Applied Numerical Mathematics}, 25(2–3):151--167, 1997.

\bibitem{besse2010derivation}
C.~Besse and T.~Goudon.
\newblock {Derivation of a Non-Local Model for Diffusion
  Asymptotics---Application to Radiative Transfer Problems}.
\newblock {\em Commun. Comput. Phys.}, 8(5):1139, 2010.

\bibitem{Bhatnagar1954}
P.~L. Bhatnagar, E.~P. Gross, and M.~Krook.
\newblock {A Model for Collision Processes in Gases. I. Small Amplitude
  Processes in Charged and Neutral One-Component Systems}.
\newblock {\em Physical Review}, 94(3):511--525, 1954.

\bibitem{Boscarino2013}
S.~Boscarino, L.~Pareschi, and G.~Russo.
\newblock {Implicit-Explicit Runge--Kutta Schemes for Hyperbolic Systems and
  Kinetic Equations in the Diffusion Limit}.
\newblock {\em SIAM J. Sci. Comput.}, 35(1):A22--A51, 2013.

\bibitem{Bouchut1998}
F.~Bouchut.
\newblock {Construction of BGK Models with a Family of Kinetic Entropies for a
  Given System of Conservation Laws}.
\newblock {\em J. Stat. Phys.}, 95(1):113--170, 1999.

\bibitem{buco}
C.~Buet and S.~Cordier.
\newblock {An asymptotic preserving scheme for hydrodynamics radiative transfer
  models}.
\newblock {\em Numer. Math.}, 108(2):199--221, 2007.

\bibitem{Buet2006717}
C.~Buet and B.~Despres.
\newblock {Asymptotic preserving and positive schemes for radiation
  hydrodynamics}.
\newblock {\em J. Comput. Phys.}, 215(2):717--740, 2006.

\bibitem{Carrillo2008}
J.~a. Carrillo, T.~Goudon, P.~Lafitte, and F.~Vecil.
\newblock {Numerical Schemes of Diffusion Asymptotics and Moment Closures for
  Kinetic Equations}.
\newblock {\em J. Sci. Comput.}, 36(1):113--149, 2008.

\bibitem{cohen2011numerical}
H.~Cohen.
\newblock {\em {Numerical Approximation Methods}}.
\newblock Springer, 2011.

\bibitem{cogogo}
J.-F. Coulombel, F.~Golse, and T.~Goudon.
\newblock {Diffusion approximation and entropy-based moment closure for kinetic
  equations}.
\newblock {\em Asymptot. Anal.}, 45:1--39, 2005.

\bibitem{Dimarco2013}
G.~Dimarco and L.~Pareschi.
\newblock {Asymptotic Preserving Implicit-Explicit Runge--Kutta Methods for
  Nonlinear Kinetic Equations}.
\newblock {\em SIAM J. Numer. Anal.}, 51(2):1064--1087, 2013.

\bibitem{EEng03}
W.~E and B.~Engquist.
\newblock {The heterogeneous multi-scale methods}.
\newblock {\em Commun. Math. Sci.}, 1(1):87--132, 2003.

\bibitem{E:2007p3747}
W.~E, B.~Engquist, X.~Li, W.~Ren, and E.~Vanden-Eijnden.
\newblock {Heterogeneous multiscale methods: A review}.
\newblock {\em Commun. Comput. Phys.}, 2(3):367--450, 2007.

\bibitem{logg}
K.~Eriksson, C.~Johnson, and A.~Logg.
\newblock {Explicit Time-Stepping for Stiff ODEs}.
\newblock {\em SIAM J. Sci. Comput.}, 25(4):1142--1157, 2004.

\bibitem{Filbet2010}
F.~Filbet and S.~Jin.
\newblock {An Asymptotic Preserving Scheme for the ES-BGK Model of the
  Boltzmann Equation}.
\newblock {\em J. Sci. Comput.}, 46(2):204--224, 2010.

\bibitem{gear:1091}
C.~W. Gear and I.~G. Kevrekidis.
\newblock {Projective Methods for Stiff Differential Equations: Problems with
  Gaps in Their Eigenvalue Spectrum}.
\newblock {\em SIAM J. Sci. Comput.}, 24(4):1091--1106, 2003.

\bibitem{ali}
A.~Giuseppe and A.~M. Anile.
\newblock {Moment equations for charged particles: global existence results}.
\newblock In P.~Degond, L.~Pareschi, and G.~Russo, editors, {\em Modeling and
  Computational Methods for Kinetic Equations}, Modeling and Simulation in
  Science, Engineering and Technology, pages 59--80. Birkh\"{a}user Boston,
  2004.

\bibitem{gl1}
P.~Godillon-Lafitte and T.~Goudon.
\newblock {A Coupled Model for Radiative Transfer: Doppler Effects,
  Equilibrium, and Nonequilibrium Diffusion Asymptotics}.
\newblock {\em Multiscale Modeling \& Simulation}, 4(4):1245--1279,
  2005.

\bibitem{golse}
F.~Golse, S.~Jin, and C.~Levermore.
\newblock {The Convergence of Numerical Transfer Schemes in Diffusive Regimes
  I: Discrete-Ordinate Method}.
\newblock {\em SIAM J. Numer. Anal.}, 36(5):1333--1369, 1999.

\bibitem{goto1}
L.~Gosse and G.~Toscani.
\newblock {Space Localization and Well-Balanced Schemes for Discrete Kinetic
  Models in Diffusive Regimes}.
\newblock {\em SIAM J. Numer. Anal.}, 41(2):641--658, 2003.

\bibitem{goto2}
L.~Gosse and G.~Toscani.
\newblock {Asymptotic-preserving \& well-balanced schemes for
  radiative transfer and the Rosseland approximation}.
\newblock {\em Numer. Math.}, pages 223--250, 2004.

\bibitem{guka}
J.~Guermond and G.~Kanschat.
\newblock {Asymptotic Analysis of Upwind Discontinuous Galerkin Approximation
  of the Radiative Transport Equation in the Diffusive Limit}.
\newblock {\em SIAM J. Numer. Anal.}, 48(1):53--78, 2010.

\bibitem{haha}
J.~R. Haack and C.~D. Hauck.
\newblock {Oscillatory behavior of asymptotic-preserving splitting methods for
  a linear model of diffusive relaxation}.
\newblock {\em Kinet. Relat. Models}, 1(4):573--590, 2008.

\bibitem{Hairer1993}
E.~Hairer, G.~Wanner, and S.~Norsett.
\newblock {\em {Solving Ordinary Differential Equations I}}.
\newblock Springer Berlin Heidelberg, 1993.

\bibitem{halo}
C.~Hauck and R.~Lowrie.
\newblock {Temporal Regularization of the $P_N$ Equations}.
\newblock {\em Multiscale Modeling \& Simulation}, 7(4):1497--1524, 2009.

\bibitem{Jin1999}
S.~Jin.
\newblock {Efficient asymptotic-preserving (AP) schemes for some multiscale
  kinetic equations}.
\newblock {\em SIAM J. Sci. Comput.}, pages 1--24, 1999.

\bibitem{jipato1}
S.~Jin, L.~Pareschi, and G.~Toscani.
\newblock {Diffusive Relaxation Schemes for Multiscale Discrete-Velocity
  Kinetic Equations}.
\newblock {\em SIAM J. Numer. Anal.}, 35(6):2405--2439, 1998.

\bibitem{jipato2}
S.~Jin, L.~Pareschi, and G.~Toscani.
\newblock {Uniformly Accurate Diffusive Relaxation Schemes for Multiscale
  Transport Equations}.
\newblock {\em SIAM J. Numer. Anal.}, 38(3):913--936, 2000.

\bibitem{JinXin95}
S.~Jin, Z.~Xin, S.~Jin, and Z.~Xin.
\newblock {The Relaxation Schemes for Systems of Conservation Laws in Arbitrary
  Space Dimensions}.
\newblock {\em Comm. Pure Appl. Math.}, 48:235--277, 1995.

\bibitem{KevrGearHymKevrRunTheo03}
I.~G. Kevrekidis, C.~W. Gear, J.~M. Hyman, P.~G. Kevrekidis, O.~Runborg, and
  C.~Theodoropoulos.
\newblock {Equation-free, coarse-grained  computation: enabling
  microscopic simulators to perform system-level tasks}.
\newblock {\em Commun. Math. Sci.}, 1(4):715--762, 2003.

\bibitem{Kevrekidis2009a}
I.~G. Kevrekidis and G.~Samaey.
\newblock {Equation-free  computation: algorithms and applications.}
\newblock {\em Annual review of physical chemistry}, 60:321--44, 2009.

\bibitem{klar98-2}
A.~Klar.
\newblock {A numerical method for kinetic semiconductor equations in the
  drift-diffusion limit}.
\newblock {\em SIAM J. Sci. Comput.}, 20(5):1696--1712, 1998.

\bibitem{klar98}
A.~Klar.
\newblock {An Asymptotic-Induced Scheme for Nonstationary Transport Equations
  in the Diffusive Limit}.
\newblock {\em SIAM J. Numer. Anal.}, 35(3):1073--1094, 1998.

\bibitem{klar99}
A.~Klar.
\newblock {An Asymptotic Preserving Numerical Scheme for Kinetic Equations in
  the Low Mach Number Limit}.
\newblock {\em SIAM J. Numer. Anal.}, 36(5):1507--1527, 1999.

\bibitem{lafitte2012}
P.~Lafitte and G.~Samaey.
\newblock {Asymptotic-preserving Projective Integration Schemes for Kinetic
  Equations in the Diffusion Limit}.
\newblock {\em SIAM J. Sci. Comput.}, 34(2):A579--A602, 2012.

\bibitem{lamomi}
E.~W. Larsen and J.~Morel.
\newblock {Asymptotic solutions of numerical transport problems in optically
  thick, diffusive regimes II}.
\newblock {\em J. Comput. Phys.}, 83(1):212--236, 1989.

\bibitem{Lee2007}
S.~L. Lee and C.~W. Gear.
\newblock {Second-order accurate projective integrators for multiscale
  problems}.
\newblock {\em J. Comput. Appl. Math.}, 201(1):258--274, 2007.

\bibitem{lemi}
M.~Lemou and L.~Mieussens.
\newblock {A New Asymptotic Preserving Scheme Based on Micro-Macro Formulation
  for Linear Kinetic Equations in the Diffusion Limit}.
\newblock {\em SIAM J. Sci. Comput.}, 31(1):334--368, 2008.

\bibitem{lomo}
R.~B. Lowrie and J.~E. Morel.
\newblock {Discontinuous Galerkin for hyperbolic systems with stiff
  relaxation}.
\newblock In {\em Discontinuous Galerkin methods ,Newport, RI, 1999)},
  volume~11 of {\em Lect. Notes Comput. Sci. Eng.}, pages 385--390. Springer,
  Berlin, 2000.

\bibitem{lomc}
R.~G. McClarren and R.~B. Lowrie.
\newblock {The effects of slope limiting on asymptotic-preserving numerical
  methods for hyperbolic conservation laws}.
\newblock {\em J. Comput. Phys.}, 227(23):9711--9726, 2008.

\bibitem{Melis2013}
W.~Melis and G.~Samaey.
\newblock {A relaxation method with projective integration for solving
  nonlinear systems of hyperbolic conservation laws}.
\newblock 2013.

\bibitem{mine}
G.~N. Minerbo.
\newblock {Maximum entropy Eddington factors}.
\newblock {\em Journal of Quantitative Spectroscopy and Radiative Transfer},
  20(6):541--545, 1978.

\bibitem{pomr}
G.~C. Pomraning.
\newblock {\em {Linear kinetic theory and particle transport in stochastic
  mixtures}}, volume~7 of {\em Series on Advances in Mathematics for Applied
  Sciences}.
\newblock World Scientific Publishing Co. Inc., River Edge, NJ, 1991.

\bibitem{Rico-Martinez}
R.~Rico-Mart$\backslash$'inez, C.~W. Gear, and I.~G. Kevrekidis.
\newblock {Coarse projective kMC integration: forward/reverse initial and
  boundary value problems}.
\newblock {\em J. Comput. Phys.}, 196(2):474--489, 2004.

\bibitem{Shmakov2011}
S.~L. Shmakov.
\newblock {A universal method of solving quartic equations}.
\newblock {\em Int. J. Pure Appl. Math.}, 71(2):251--259, 2011.

\bibitem{sommeijer1990increasing}
B.~P. Sommeijer.
\newblock {Increasing the real stability boundary of explicit methods}.
\newblock {\em Computers \& Mathematics with Applications},
  19(6):37--49, 1990.

\bibitem{strang}
G.~Strang.
\newblock {On the Construction and Comparison of Difference Schemes}.
\newblock {\em SIAM J. Numer. Anal.}, 5(3):506--517, 1968.

\bibitem{struchtrup2005macroscopic}
H.~Struchtrup.
\newblock {\em {Macroscopic transport equations for rarefied gas flows:
  approximation methods in kinetic theory}}.
\newblock Springer, 2005.

\bibitem{Ulrich2008}
D.~Ulrich.
\newblock {\em {Complex Made Simple}}.
\newblock American Mathematical Society, 2008.

\bibitem{Vandekerckhove2007a}
C.~Vandekerckhove, D.~Roose, and K.~Lust.
\newblock {Numerical stability analysis of an acceleration scheme for step size
  constrained time integrators}.
\newblock {\em J. Comput. Appl. Math.}, 200(2):761--777, 2007.

\bibitem{vignal}
M.~H. Vignal.
\newblock {A Boundary Layer Problem for an Asymptotic Preserving Scheme in the
  Quasi-Neutral Limit for the Euler–Poisson System}.
\newblock {\em SIAM J. Appl. Math.}, 70(6):1761--1787, 2010.

\end{thebibliography}

\appendix

\section{Parametrization of stability regions}

We need to derive the expressions that are given in Proposition~\ref{thm:stab_par}.
Let us start from the projective Runge--Kutta method as applied to the linear
test equation, i.e., equation~\eqref{eq:rk_ks}, which we now write as
\begin{align}
	\begin{cases}
k_1 &=:\kappa_1(\tau) y^n =\dfrac{\tau^{K+1}-\tau^{K}}{\delta t}y^n \\
k_s &=:\kappa_s(\tau) y^n= \dfrac{\tau^{K+1}-\tau^{K}}{\delta
t}F_s(\tau)y^n, \qquad 2 \le s \le S,\\
y^{n+1} &=: \sigma(\tau) y^n= \left(\tau^{K+1} + (M\delta t)\sum_{s=1}^{S}b_s
\kappa_s\right)y^n ,
\end{cases}
\label{eq:rk_ks}
\end{align}
with $F_s(\tau)=\left(\tau^{K+1}+(M_s\delta
t)\sum_{l=1}^{s-1}\dfrac{a_{s,l}}{c_s}\kappa_l\right)$.
The stability boundary of the projective Runge--Kutta method is then given by all values of $\tau$ such that
\begin{align}
\|\sigma(\tau)\|=\left|\tau^{K+1} + (M\delta t)\sum_{s=1}^{S}b_s
\kappa_s\right|=1.\label{eq:app_stab_cond}
\end{align}
For small values of $z=\delta t/\Delta t$, the stability region consists of two parts, one part close to the origin, and one part close to 1.
To locate these stability boundaries for small $z$, we proceed in two steps:
\begin{enumerate}[(i)]
	\item We notice that $\sigma(\tau)$ depends on $\kappa_s(\tau)$, while the $\kappa_s(\tau)$ themselves are recursively defined.  We therefore first obtain an explicit formula for each of the quantities $\kappa_s(\tau)$, such that we have an explicit formula for $\sigma(\tau)$ as a function of $\tau$.
	\item Next, for each of the stability regions, we perform an asymptotic expansion of $\tau$ as a function of $z$, and impose the condition~\eqref{eq:app_stab_cond}.
\end{enumerate}

Before proceeding with the derivation, we introduce some additional notation.  We will denote by $\bar{A}$ the matrix
	  \[
	\bar{A}=\bar{C}A=\mathrm{\diag}\left(M_1/c_1,M_2,\ldots,M_s/c_s\right)A
	\]
	 where $A$ denotes the matrix of RK-coefficients corresponding to a general $S$-stage
	  Runge--Kutta scheme. We also introduced the vectors $e_1$ and $e_r$ which are
	  defined as $(1,0,\ldots)^T$ and $(0,1,1,\ldots)^T$ respectively.

\paragraph{Step (i): Derivation of expression for $F_s(\tau)$} Let us now first derive an expression for $F_s(\tau)$.
We will show that
\begin{equation}\label{eq:Fstau}
	  \begin{cases}
	  F_s(\tau) =1 \qquad s=1,\\
	F_s(\tau)=
	\sum_{j=0}^{s-1}(\tau^K(\tau-1))^j((\bar{A}^j)(e_1+\tau^{K+1}e_r))_s\qquad

	\forall  2\le s \le S.

	\end{cases}
\end{equation}
By definition,
	we know that $F_s$ can be written as:
	\begin{equation}
		F_s(\tau) =\tau^{K+1}+\sum_{l=1}^{s-1}\bar{a}_{sl}F_l(\tau^{K+1}-\tau^K),
	\end{equation}
	where we have introduced $\bar{a}_{sl}=M_sa_{sl}/c_s$ to avoid notational
	complexity. Then we can derive equation ~\eqref{eq:Fstau} by induction.
	\begin{myenv}
		\textbf{Base step ($s=2$).} By definition the following holds:
		\begin{equation}
		F_2(\tau)= \tau^{K+1} +a_{21}(\tau^{K+1}-\tau^{K}).
		\end{equation}
		Now remark that $(\mathcal{I}e_1)_2 =0$, while $(\mathcal{I}e_r)_2=1.$ Then
		the statement follows from a simple substitution and rearranging the terms.\\
		\textbf{Induction step.} We impose that equation ~\eqref{eq:Fstau} is valid
		for all $l=1,\ldots s-1$ as induction hypothesis.  So, let's consider
		$F_s(\tau)$ in more detail:
	\begin{eqnarray}
		F_s(\tau) &=& \tau^{K+1} +\sum_{l=1}^{s-1}\bar{a}_{sl}
		\sum_{j=0}^{l-1}\left(\tau^K(\tau-1)\right)^j\left((\bar{A}^je_1)_l+(\bar{A}^{j}e_r)_l\tau^{K+1}\right)\tau^{K}(\tau-1)\\
		&=&
		\tau^{K+1} +
		\sum_{l=1}^{s-1}\bar{a}_{sl}\tau^{K}(\tau-1)+\left(\tau^K(\tau-1)\right)^2\sum_{l=2}^{s-1}\bar{a}_{sl}\left(\bar{A}e_1+\tau^{K+1}\bar{A}e_r\right)_l+\ldots\\
		&=& \tau^{K+1}+\sum (\tau^K(\tau-1))^l(\bar{A}^le_1+\tau^{K+1}\bar{A}^le_r)_s
	\end{eqnarray}

	\end{myenv}

	 and hence, the equation for the amplification factor $\sigma_S$
	reads:
	\begin{equation}
		\sigma_S(\theta)
		=\tau^{K+1}+M\sum_{s=1}^Sb_s\sum_{j=0}^{s-1}(\tau^K(\tau-1))^{j+1}\left((\bar{A}^j)(e_1+\tau^{K+1}e_r)\right)_s.
	\end{equation}
	\paragraph{Step (ii): Asymptotic expansion of $\tau$ for each of the two stability regions $\mathcal{R}_{1}^{\mathrm{PRK}}$ and $\mathcal{R}_{2}^{\mathrm{PRK}}$}

Let us first consider the region $\mathcal{R}_{1}^{\mathrm{PRK}}$, in which case $\tau$ is close to one.  We propose an asymptotic expansion for $\tau$ as a function of $z=\delta t/\Delta t$, and look for those values of $z$ for which~\eqref{eq:app_stab_cond} is satisfied. We expand
	$\tau(\theta)$ as follows:
	\begin{equation}
		\tau(\theta) = 1+C_1(\theta)z+C_2(\theta) z^2+h.o.t\qquad
		0\leq \theta \leq 2\pi
	\end{equation}
	By means of an application of the binomial theorem on both $\tau^{K+1}$ and
	$(\tau^K(\tau-1))^{j+1}$, we get
	\begin{eqnarray}
	\tau^{K+1}& =&
	1+C_1(K+1)z+(K+1)\left(C_2+\dfrac{(K)}{2}C_1^2\right)z^2+\Oh(z^3),\\
	(\tau^K(\tau-1))^{j}&=&
	C_1^{j}z^{j}+jC_1^{j-1}z^{j-1}\left((KC_1^2+C_2)z^2+\left(2KC_1C_2-\dfrac{1}{2}KC_1^3+\dfrac{1}{2}K^2C_1^3\right)z^3\right)+\Oh(z^4),\\
	\end{eqnarray}
	where we have momentarily suppressed dependence on $\theta$.
	Then, $F_s(z)$ can be expanded as
	\begin{eqnarray}
		F_s(z) &=& \sum_{j=0}^{s-1}
		(\tau^{K}(\tau-1))^j\left[\left(\dfrac{\bar{C}}{z}-(K+1)\right)^jA^j(e_1+e_r\tau^{K+1})\right]_s,\\
		&=&\sum_{j=0}^{s-1}\left(C_1^j+jC_1^{j-1}z(KC_1^2+C_2)\right)\left[\left(\bar{C}^j+j\bar{C}^{j-1}z(-(K+1))A^j\right)(e_1+e_r(1+C_1(K+1)z))\right]_s+\Oh(z^2)
	\end{eqnarray}
	where we have introduced $\bar{C}$ as $\mathrm{diag}(0,c_2,\ldots,c_s)$ and
	applied the binomial theorem to expand $\bar{A}^j$.
	This allows us to expand $\sigma_S$  as follows:

	\begin{equation}
		\sigma_S=
		1+\sum_{s=1}^Sb_s\sum_{l=1}^{s-1}C_1^j\left[\left(\bar{C}^jA^je_1\right)_s+\left(\bar{C}^jA^je_r\right)_s\right]+\Oh(z).\label{eq:sigma_R1}
	\end{equation}
	The coefficients $C_1,C_2$ can be determined by solving the equation $\sigma_S
=exp(\imag\theta)$ and matching powers of $(\delta t/\Delta t)$.

	In a similar way, this approach can be applied to the region
	$\mathcal{R}_{2,\mathrm{PRK}}$, where we expand $\tau(\theta)$ as:
	\begin{equation}
	\tau(\theta) =C_1'(\theta)z^{1/K} +
	C_2'(\theta)z^{2/K}+h.o.t.
	\end{equation}
We now derive the expressions
\begin{eqnarray}
		\tau^{K+1} &=& C_1'^{K+1}z^{K+1} +(K+1)C_1'^KC_2z^{K+2}+\Oh(z^{K+3}),\\
		\left(\tau^K(\tau-1)\right)^j &=&
		(-1)^jC_1'^{jK}z^{jK}+(-1)^{j-1}z^{jK+1}\left(C_1'^{jK+1}-jKC_1^{jK-1}C_2'\right),
	\end{eqnarray}
from which we obtain
\begin{eqnarray}
F_s &=&
\sum_{j=0}^{s-1}(-1)^{j}C_1'^{jK-1}\left(C_1'-z\left(C_1'^{2}-jKC_2'\right)\right)\bar{C}^{j-1}\left(\bar{C}-j(K+1)z\right)(A^je_1)_s+\Oh(z^2)\\
&=&\sum_{j=0}^{s-1}C_1'^{jK}\bar{C}^j(A^je_1)_s+\Oh(z),
\end{eqnarray}
which implies that $\sigma_S$ reads:

\begin{equation}
	\sigma_S=\sum_{s=1}^Sb_s\sum_{l=0}^{s-1}C_1'^{lK}\bar{C}^l(A^le_1)_s+\Oh(z).
\end{equation}
We conclude by
giving two concrete examples.

\begin{ex}[Parametrization of stability regions of PRK2]
Let us first consider the second order projective Runge--Kutta method PRK2.
We start with the region $\mathcal{R}_{2,\mathrm{PRK}}$. To this end, we have
to solve the following equation for $C_1'$
\begin{equation}
	\dfrac{1}{2}(C_1')^{2K}=e^{\imag\theta},
\end{equation}
which yields the roots:
\begin{equation}
C_1'=\sqrt{2}^{1/K}\exp(\imag(\theta/K+2j\pi/K))\quad j=1,\ldots, K-1.
\end{equation}
Hence, $\tau(\theta)$ can be written as
\begin{equation}
\tau=\sqrt{2}^{1/K}\exp{\imag(\theta+2j\pi/K)}\left(\dfrac{\delta t}{\Delta
t}\right)^{1/K}.
\end{equation}
For the stability region $\mathcal{R}_{1,\mathrm{PRK}}$, we use equation ~\eqref{eq:sigma_R1} to
determine the coefficient $C_1$,
\begin{equation}
	1-e^{\imag\theta}+C_1+\dfrac{1}{2}C_1^2 =0.
\end{equation}
which gives rise to the solution:
\begin{equation}
	C_1 = -1\pm \sqrt[4]{5-4\cos\theta}\arg(-1+2\cos\theta+2\imag\sin\theta).
\end{equation}
Thus, this part of the stability region is defined by:
\begin{equation}
	\tau =1-\left(\dfrac{\delta t}{\Delta t}\right)\pm
	\sqrt[4]{5-4\cos\theta}\arg(-1+2\cos\theta+2\imag\sin\theta)\left(\dfrac{\delta
	t}{\Delta t}\right)+\Oh\left(\left(\dfrac{\delta t}{\Delta t}\right)^2\right)
\end{equation}
\end{ex}

\begin{ex}[Parametrization of stability regions of PRK4]
The derivation is very similar to the second order case.
Now, we will determine the stability region around zero. So we will expand the
amplification factor of the inner integrator $\tau$ in powers of $(\tau/\Delta
t)^{1/K}$:
\begin{equation}
	\tau =C_1'\left(\dfrac{\delta t}{\Delta t}\right)^{1/K} + C_2'
	\left(\dfrac{\delta t}{\Delta t}\right)^{2/K},
\end{equation}
and substitute the latter into the stability polynomial equation. To find the
parametrization of the stability region, we set $\sigma=e^{\imag \theta}$ since
we are looking for the values of $\tau$ that result in a amplification factor
$|\sigma|=1$.  Next, we have to match the powers of $(\delta t/\Delta t)^{1/K}$
on both sides of the equations. This yields the following fourth order
polynomial in $C_1^K$:
\begin{equation}
\dfrac{1}{24}\left(C_1'^K\right)^4-\dfrac{1}{12}\left(C_1'^K\right)^3
+\dfrac{1}{6}\left(C_1'^K\right)^2-\dfrac{1}{6}C_1'^K-e^{\i\theta}=0
\end{equation}
This equation can be solved by using Ferrari's method \cite{Shmakov2011}. First,
we have to convert this polynomial into a so called \textit{depressed quartic},
by performing a change of variables: $C_1'^K = x+1/2$, which reduces the quartic
to :
\begin{equation}
p_1(x)=x^4+\dfrac{5}{2}x^2-x-\dfrac{19}{16}-24e^{\imag\theta}=0
\end{equation}
The latter can be factored into quadratic polynomials:
\begin{equation}
p_1(x) = (x^2+px+q)(x^2+rx+s)
\end{equation}
which results in the resolvent cubic polynomial in $P=p^2$:
\begin{equation}
P^3+5P^2+(11+96e^{\imag \theta})P-1=0
\end{equation}
This can be solved by performing again a change of variables : $P=t-5/3$ to
reduce the polynomial to a depressed cubic:
\begin{equation}
	t^3+\left(\dfrac{8}{3}+96e^{\imag\theta}\right)t-\dfrac{272}{27}-160e^{\imag\theta}=0,
\end{equation}
followed by Vi\'eta's substitution: $t=w-\dfrac{8/3+96e^{\imag\theta}}{3w}$
which finally yields with a quadratic polynomial in $w^3$:
\begin{equation}
	w^6+\left(-160e^{\imag\theta}-\dfrac{272}{27}\right)w^3-32768e^{3\imag\theta}-\dfrac{512}{729}-\dfrac{2048}{27}e^{\imag\theta}
	-\dfrac{8192}{3}e^{2\imag\theta}=0,
\end{equation}
which yields:
\begin{equation}
	w^3 =80e^{\imag\theta}+\dfrac{136}{27}\pm
	\dfrac{8}{9}\sqrt{41472e^{3\imag\theta}+11556e^{2\imag\theta}+1116e^{\imag\theta}+33},
\end{equation}
and hence, we can calculate a possible root of the sextic equation. This implies
that $w_1$ is a possible root:
\begin{equation}
w_1= \dfrac{2}{3}\sqrt[3]{270 e^{\imag \theta}+17+3\sqrt{41472e^{3\imag
\theta}+11556e^{2\imag\theta}+1116e^{\imag\theta}+33}}
\end{equation}
which implies that the following expression for $t$ is a root of equation:
\begin{equation}
	t= w_1 - \dfrac{8/3+96e^{\imag\theta}}{3w_1}.
\end{equation}
  Finally, we can calculate $C_1'$ by substituting the expression for $t$ back
  into the equation for $P$ and using the fact that $P=t-5/3$ and
  $C_1'^K=x+1/2$.
\begin{equation}
C_1' = \sqrt[K]{\dfrac{\pm\sqrt{P}
\pm\sqrt{P-2\left(\dfrac{5}{2}+P\pm\dfrac{1}{\sqrt{P}}\right)}}{2}+\dfrac{1}{2}}
\end{equation}
A similar procedure can be followed to derive an expression for $C_1$ to
determine the region $\mathcal{R}_{1,\mathrm{PRK}}$.
\end{ex}
\end{document}